\newtheorem*{theorem*}{Theorem}
\newtheorem{theorem}{Theorem}[section]
\newtheorem{proposition}{Proposition}[section]
\newtheorem{lemma}{Lemma}[section]
\newtheorem{corollary}{Corollary}[section]
\newtheorem{theoremm}{Theorem}
\theoremstyle{remark}
\newtheorem{remark}{Remark}[section]
\newcommand{\R}{\mathbb{R}}
\newcommand{\N}{\mathbb{N}}
\newcommand{\Z}{\mathbb{Z}}
\newcommand{\Q}{\mathbb{Q}}
\newcommand{\D}{\mathbb{D}}
\newcommand{\Bc}{\mathcal{B}}
\newcommand{\Cc}{\mathcal{C}}
\newcommand{\Mc}{\mathcal{M}}
\newcommand{\Nc}{\mathcal{N}}
\newcommand{\Tc}{\mathcal{T}}
\newcommand{\expect}{\mathbb{E}}
\newcommand{\Expect}[1]{\mathbb{E} \left[ #1 \right] }
\newcommand{\EXPECT}[2]{\mathbb{E}_{#1} \left[ #2 \right] }
\newcommand{\prob}{\mathbb{P}}
\newcommand{\Prob}[1]{\mathbb{P} \left( #1 \right) }
\newcommand{\PROB}[2]{\mathbb{P}_{#1} \left( #2 \right) }
\newcommand{\brob}{\mathds{P}}
\newcommand{\Brob}[1]{\mathds{P} \left( #1 \right) }
\newcommand{\BROB}[2]{\mathds{P}_{#1} \left( #2 \right) }
\newcommand{\expectB}{\mathds{E}}
\newcommand{\ExpectB}[1]{\mathds{E} \left[ #1 \right] }
\newcommand{\EXPECTB}[2]{\mathds{E}_{#1} \left[ #2 \right] }
\newcommand{\abs}[1]{\left\vert #1 \right\vert}
\newcommand{\norme}[1]{\left\| #1 \right\| }
\newcommand{\scalar}[1]{\left\langle #1 \right\rangle }
\newcommand{\floor}[1]{\left\lfloor #1 \right\rfloor}
\newcommand{\indic}[1]{ \mathbf{1}_{ \left\{ #1 \right\} } }
\newcommand{\eps}{\varepsilon}
\title{Planar Brownian motion and Gaussian multiplicative chaos}
\author{Antoine Jego\thanks{On leave from the University of Cambridge, partly supported by the EPSRC grant
EP/L016516/1 for the University of Cambridge Centre for Doctoral Training, the Cambridge Centre for Analysis. E-mail address: \href{mailto:apfj2@cam.ac.uk}{apfj2@cam.ac.uk}}
}
\affil{University of Vienna}
\date {}
\numberwithin{equation}{section}
\begin{document}

\renewcommand{\theparagraph}{\thesubsection.\arabic{paragraph}} 

\maketitle

\begin{abstract}
We construct the analogue of Gaussian multiplicative chaos measures for the local times of planar Brownian motion by exponentiating the square root of the local times of small circles. We also consider a flat measure supported on points whose local time is within a constant of the desired thickness level and show a simple relation between the two objects. Our results extend those of \cite{bass1994} and in particular cover the entire $L^1$-phase or subcritical regime. These results allow us to obtain a nondegenerate limit for the appropriately rescaled size of thick points, thereby considerably refining estimates of \cite{dembo2001}.
\end{abstract}

\tableofcontents

\section{Introduction}

\subsection{Main results}

Gaussian multiplicative chaos (GMC) introduced by Kahane \cite{kahane} consists in defining and studying the properties of random measures formally defined as the exponential of a log-correlated Gaussian field, such as the two-dimensional Gaussian free field (GFF). Since such a field is not defined pointwise but is rather a random generalised function, making sense of such a measure requires some nontrivial work. The theory has expanded significantly in recent years and by now it is relatively well understood, at least in the subcritical case \cite{RobertVargas2010, DuplantierSheffieldGMC, RhodesVargasGMC, ShamovGMC, berestycki2017} and even in the critical case \cite{DRSV2014b, DRSV2014a, JunnilaSaksman2017, JSW2018, powell2018}. Furthermore, Gaussian multiplicative chaos appears to be a universal feature of log-correlated fields going beyond the Gaussian theory discussed in these papers. Establishing universality for naturally arising models is a very active and important area of research. We mention the work of \cite{SaksmanWebb2016} on the Riemann $\zeta$ function on the critical line and the work of \cite{FyodorovKeating2014, Webb2015, NikulaSaksmanWebb2018, Lambert2018, BerestyckiWebWong2018 } on large random matrices.

The goal of this paper is to study Gaussian multiplicative chaos for another natural non-Gaussian log-correlated field: (the square root of) the local times of two-dimensional Brownian motion.
\medskip

Before stating our main results, we start by introducing a few notations.
Let $\prob_x$ be the law under which $(B_t)_{t \geq 0}$ is a planar Brownian motion starting from $x \in \R^2$. Let $D \subset \R^2$ be an open bounded simply connected domain, $x_0 \in D$ a starting point and $\tau$ be the first exit time of $D$:
\[
\tau := \inf \{ t \geq 0: B_t \notin D \}.
\]
For all $x \in \R^2, t>0, \eps >0,$ define $L_{x,\eps}(t)$ the local time of $\left( \abs{B_s - x}, s \geq 0 \right)$ at $\eps$ up to time $t$ (here $\abs{\cdot}$ stands for the Euclidean norm):
\[
L_{x,\eps}(t) := \lim_{\substack{r \rightarrow 0 \\ r >0}} \frac{1}{2 r} \int_0^t \indic{ \eps - r \leq \abs{B_s - x} \leq \eps + r} ds.
\]
One can use classical theory of one-dimensional semimartingales to get existence for a fixed $x$ of $\{ L_{x,\eps}(\tau), \eps > 0 \}$ as a process. In this article, we need to make sense of $L_{x,\eps}(\tau)$ jointly in $x$ and in $\eps$. It is provided by Proposition \ref{prop:continuity} that we state at the end of this section. If the circle $\partial D(x,\eps)$ is not entirely included in $D$, we will use the convention $L_{x,\eps}(\tau) = 0$.
For all $\gamma \in (0,2)$ we consider the sequence of random measures $\mu_\eps^\gamma(dx)$ on $D$ defined by: for all Borel sets $A \subset D$,
\begin{align}
\label{eq:def mu_eps^gamma}
\mu_\eps^\gamma(A) & := \sqrt{\abs{\log \eps}} \eps^{\gamma^2/2} \int_A e^{\gamma \sqrt{\frac{1}{\eps}L_{x,\eps}(\tau)} } dx.
\end{align}
The presence of the square root in the exponential may appear surprising at first glance, but it is natural nevertheless in view of Dynkin-type isomorphisms (see \cite{rosen_2014}).

To capture the fractal geometrical properties of a log-correlated field, another natural approach consists in encoding the so-called thick points (points where the field is unusually large) in flat measures supported on those thick points. At criticality, such measures are often called extremal processes. See for instance \cite{BiskupLouidor}, \cite{BiskupLouidor2018} in the case of discrete two-dimensional GFF, see also \cite{abe2018} in the case of simple random walk on trees. In our case, we can consider for all $\gamma \in (0,2)$ the sequence of random measures $\nu_\eps^\gamma (dx,dt)$ on $D \times \R$ defined by: for all Borel sets $A \subset D$ and $T \subset \R$,
\begin{equation}
\label{eq:def nu_eps^gamma}
\nu_\eps^\gamma(A \times T) := \abs{ \log \eps } \eps^{-\gamma^2/2} \int_A \indic{ \sqrt{\frac{1}{\eps}L_{x,\eps}(\tau)} - \gamma \log \frac{1}{\eps} \in T} dx.
\end{equation}

\begin{theorem}\label{th:convergence measures}
For all $\gamma \in (0,2)$, the sequences of random measures $\nu_\eps^\gamma$ and $\mu_\eps^\gamma$ converge as $\eps \to 0$ in probability for the topology of vague convergence on $D \times ( \R \cup \{ + \infty \} )$ and on $D$ respectively towards Borel measures $\nu^\gamma$ and $\mu^\gamma$.
\end{theorem}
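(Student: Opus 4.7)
The plan is to prove convergence of $\nu_\eps^\gamma$ first and then to deduce convergence of $\mu_\eps^\gamma$. The two objects are linked by the elementary change of variable $t = \sqrt{L_{x,\eps}(\tau)/\eps} - \gamma \log(1/\eps)$, performed inside the defining integral for $\mu_\eps^\gamma$, which yields the identity
\begin{equation*}
\mu_\eps^\gamma(A) = \frac{1}{\sqrt{|\log \eps|}} \int_\R e^{\gamma t} \, \nu_\eps^\gamma(A \times dt).
\end{equation*}
This reduces the whole theorem to convergence of $\nu_\eps^\gamma$, together with enough tail control in $t$ to justify dominated convergence against the exponential weight $e^{\gamma t}$.

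Convergence of $\nu_\eps^\gamma$ will proceed by the method of moments. For the first moment I would combine (i) the classical hitting estimate $\PROB{x_0}{B \text{ visits } \partial D(x,\eps) \text{ before } \tau} \sim 2\pi G_D(x_0, x)/|\log \eps|$ with (ii) a Ray--Knight description of the local times on concentric circles around $x$: conditional on such a visit, $L_{x,\eps}(\tau)/\eps$ is a sum of i.i.d.\ exponential contributions coming from independent excursions away from $\partial D(x,\eps)$, so that $\sqrt{L_{x,\eps}(\tau)/\eps}$ behaves like a two-dimensional Bessel process read at time $\log(1/\eps)$. A Gaussian saddle-point computation then yields
\begin{equation*}
\PROB{x_0}{\sqrt{L_{x,\eps}(\tau)/\eps} - \gamma \log(1/\eps) \in dt} \;\sim\; \frac{C(x_0, x)}{|\log \eps|} \, \eps^{\gamma^2/2} \, e^{-\gamma t} \, dt
\end{equation*}
uniformly on compacts in $t$, giving convergence of $\expect[\nu_\eps^\gamma(A \times T)]$ to an explicit limit with density proportional to $e^{-\gamma t}$ in $t$.

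For the second moment I would analyse the joint law of $(L_{x,\eps}(\tau), L_{y,\eps}(\tau))$ by splitting the Brownian trajectory at the first entry into $D(x,r) \cup D(y,r)$ with $r \sim |x-y|$, and exploiting the near-independence of the subsequent excursions near $x$ and near $y$. This produces a cross-correlation of order $|x-y|^{-\gamma^2}$, which is integrable on $D \times D$ only for $\gamma \in (0,\sqrt{2})$. To extend the argument to the full $L^1$-range $\gamma \in (0,2)$, I would employ a barrier/truncation scheme: introduce an auxiliary measure $\nu_\eps^{\gamma, M}$ that keeps only those $x$ whose normalised local time $\sqrt{L_{x,\eps'}(\tau)/\eps'} - \gamma \log(1/\eps')$ stays below $M$ along a whole geometric sequence of scales $\eps' \in [\eps, \eps_0]$. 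Second moments of $\nu_\eps^{\gamma, M}$ are then bounded uniformly in $\eps$, because the barrier suppresses precisely the atypically thick configurations responsible for the $\gamma \geq \sqrt{2}$ blow-up. Convergence of $\nu_\eps^\gamma$ follows by letting $M \to \infty$, the discrepancy $\expect[\nu_\eps^\gamma - \nu_\eps^{\gamma, M}]$ being controlled by a first-moment estimate.

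The main obstacle is the absence of Gaussianity: Kahane's convexity inequality, Cameron--Martin shifts, and white-noise decompositions are all unavailable here, so every concentration and moment bound has to be extracted directly from the excursion decomposition and the Ray--Knight theorem for the radial Bessel process. In particular the two-point estimate must be established \emph{jointly} along the whole sequence of scales defining the barrier, which I expect to be the most delicate part of the argument. Once convergence of $\nu_\eps^\gamma$ has been established with adequate control on its right tail in $t$ (again obtained as a by-product of the barrier analysis), combining the displayed identity with a truncation of the exponential weight and dominated convergence delivers the convergence of $\mu_\eps^\gamma$, completing the proof.
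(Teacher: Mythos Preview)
Your overall architecture---first moment via hitting probabilities and a Bessel description, second moment rescued in the full range $\gamma\in(0,2)$ by a barrier/truncation at intermediate scales, then Cauchy in $L^2$---matches the paper's scheme closely. The paper also uses a zero-dimensional (not two-dimensional) Bessel process, but that is cosmetic.

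There is, however, a genuine gap in your reduction of $\mu_\eps^\gamma$ to $\nu_\eps^\gamma$. Your identity
\[
\mu_\eps^\gamma(A)=\frac{1}{\sqrt{|\log\eps|}}\int_{\R} e^{\gamma t}\,\nu_\eps^\gamma(A\times dt)
\]
is correct, but it cannot be combined with vague convergence of $\nu_\eps^\gamma$ and ``dominated convergence against $e^{\gamma t}$'' to yield convergence of $\mu_\eps^\gamma$. The limiting measure satisfies $\nu^\gamma(dx,dt)=(2\pi)^{-1/2}\mu^\gamma(dx)\,e^{-\gamma t}\,dt$, so $\int e^{\gamma t}\,\nu^\gamma(A\times dt)=+\infty$: the integrand is not dominated by anything integrable, and truncating $t$ to a compact would make the right-hand side vanish because of the prefactor $|\log\eps|^{-1/2}$. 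In fact the entire mass of $\mu_\eps^\gamma(A)$ is carried by the window $|t|\le M\sqrt{|\log\eps|}$, which escapes to infinity. Vague convergence of $\nu_\eps^\gamma$ says nothing about this regime.

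The paper does not attempt this reduction. Instead it runs the barrier argument a second time, directly for $\mu_\eps^\gamma$, with a modified good event
\[
G'_\eps(x,\eps_0,M)=G_\eps(x,\eps_0)\cap\Bigl\{\,\bigl|\sqrt{L_{x,\eps}(\tau)/\eps}-\gamma\log(1/\eps)\bigr|\le M\sqrt{|\log\eps|}\,\Bigr\},
\]
proves first-moment harmlessness and $L^2$-boundedness for the truncated $\widetilde\mu_\eps^\gamma$, and then shows (Proposition~\ref{prop:Cauchy}, equation~\eqref{eq:prop Cauchy exponential}) that $\widetilde\mu_\eps^\gamma(A)/(\sqrt{2\pi}\gamma)$ and $\widetilde\nu_\eps^\gamma(A\times(0,\infty))$ have the same $L^2$-limit. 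The link between $\mu^\gamma$ and $\nu^\gamma$ is thus established at the level of the limits, not by passing one $\eps$-approximation through the other. If you want to keep your reduction, you would need a quantitative local limit theorem for $\nu_\eps^\gamma(A\times dt)$ uniform over $t$ in the window $|t|\lesssim\sqrt{|\log\eps|}$, which is essentially the same amount of work.
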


The measure $\nu^\gamma$ can be decomposed as a product of a measure on $D$ and a measure on $\R$. Moreover, the component on $D$ agrees with $\mu^\gamma$ and the component on $\R$ is exponential:

\begin{theorem}\label{th:identification limits}
For all $\gamma \in (0,2)$, we have $\prob_{x_0}$-a.s.,
\[
\nu^\gamma(dx,dt) = (2\pi)^{-1/2} \mu^\gamma(dx) e^{- \gamma t} dt.
\]
Moreover, by denoting $R(x,D)$ the conformal radius of $D$ seen from $x$ and $G_D(x_0,x)$ the Green function of $D$ in $x_0$, $x$ (see \eqref{eq:def Green function}), we have for all Borel set $A \subset D$,
\begin{equation}
\label{eq:th formula first moment}
\EXPECT{x_0}{ \mu^\gamma(A) } =
\sqrt{2 \pi} \gamma \int_A R(x,D)^{\gamma^2/2} G_D(x_0,x) dx \in (0,\infty).
\end{equation}
\end{theorem}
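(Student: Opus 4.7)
The plan is to prove both assertions by moment computations that lift the in-probability convergence of Theorem~\ref{th:convergence measures} to convergence of expectations and of integrals against product test functions, relying on the $L^2$-control on $\mu_\eps^\gamma$ that must be established in proving Theorem~\ref{th:convergence measures} (and which provides the required uniform integrability).

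\medskip

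For the first moment formula I would start from
\[
\EXPECT{x_0}{\mu_\eps^\gamma(A)} = \sqrt{\abs{\log\eps}}\,\eps^{\gamma^2/2}\int_A \EXPECT{x_0}{e^{\gamma\sqrt{L_{x,\eps}(\tau)/\eps}}}\,dx
\]
and evaluate the inner expectation using the near-exponential structure of the law of $L_{x,\eps}(\tau)/\eps$. Via the skew-product representation of planar Brownian motion (equivalently, Ray--Knight-type identifications), this law has an atom at $0$ of mass $1-p_{x,\eps}$ with $p_{x,\eps}$ a known multiple of $G_D(x_0,x)/\log(R(x,D)/\eps)$, and is, conditionally on being positive, to leading order exponential of mean proportional to $\log(R(x,D)/\eps)$. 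Substituting $y=u^2$ and applying Laplace's method to $\int_0^\infty e^{\gamma u} e^{-u^2/m}\,2u\,du$ localises the integral at $u^\ast \sim \gamma\abs{\log\eps}$ and gives pointwise in $x$
\[
\sqrt{\abs{\log\eps}}\,\eps^{\gamma^2/2}\,\EXPECT{x_0}{e^{\gamma\sqrt{L_{x,\eps}(\tau)/\eps}}} \longrightarrow \sqrt{2\pi}\,\gamma\,R(x,D)^{\gamma^2/2}G_D(x_0,x).
\]
Integrating over $A$ (justified by uniform integrability) produces the announced formula.

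\medskip

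For the product decomposition I would test $\nu^\gamma$ against $(x,t)\mapsto f(x)g(t)$ with $f \in C_c(D)$ and $g(t) = e^{\gamma t}h(t)$, $h \in C_c(\R)$. Direct comparison of the two definitions yields the identity
\[
\nu_\eps^\gamma(f\otimes g) = \sqrt{\abs{\log\eps}}\,\tilde{\mu}_\eps^{\gamma,h}(f),
\]
where
\[
\tilde{\mu}_\eps^{\gamma,h}(f) := \sqrt{\abs{\log\eps}}\,\eps^{\gamma^2/2}\int_D f(x)\,h\!\bigl(\sqrt{L_{x,\eps}(\tau)/\eps}-\gamma\log(1/\eps)\bigr)\,e^{\gamma\sqrt{L_{x,\eps}(\tau)/\eps}}\,dx
\]
is a ``size-biased'' truncation of $\mu_\eps^\gamma$ keeping only points with the correct thickness fluctuation. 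Repeating the Laplace evaluation of the first step with the extra cutoff $h$ shows that the fluctuation $\sqrt{L_{x,\eps}(\tau)/\eps}-\gamma\log(1/\eps)$ carries, in this size-biased sense, a limiting density $\gamma e^{-\gamma t}\,dt$, which yields
\[
\tilde{\mu}_\eps^{\gamma,h}(f) = \frac{\int_\R h(t)\,dt}{\sqrt{2\pi\abs{\log\eps}}}\,\mu_\eps^\gamma(f) + o_{\prob}(1).
\]
Multiplying by $\sqrt{\abs{\log\eps}}$ and invoking Theorem~\ref{th:convergence measures} gives $\nu^\gamma(f\otimes g) = (2\pi)^{-1/2}\mu^\gamma(f)\int g(t)e^{-\gamma t}\,dt$; a density argument in $g$ extends this to all vague test functions on $D\times(\R\cup\{+\infty\})$.

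\medskip

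The hard part will be the last display: it must hold in probability, not merely in expectation. Establishing it requires a second-moment computation in the same spirit as the one driving Theorem~\ref{th:convergence measures}, together with uniform-in-$x$ control of the density of $L_{x,\eps}(\tau)$ deep in its right tail, in the regime $\sqrt{L_{x,\eps}(\tau)/\eps}\sim\gamma\abs{\log\eps}$.
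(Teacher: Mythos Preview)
Your first-moment computation is essentially the paper's: the same Laplace evaluation of $\EXPECT{x_0}{e^{\gamma\sqrt{L_{x,\eps}(\tau)/\eps}}}$ appears in the proof of Proposition~\ref{prop:first moment estimates exponential}, and together with the $L^1$-convergence of $\mu_\eps^\gamma(A)$ (Proposition~\ref{prop:convergence L1}) it yields \eqref{eq:th formula first moment}.

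For the product decomposition there is a genuine scaling gap. Your key display
\[
\tilde{\mu}_\eps^{\gamma,h}(f) \;=\; \frac{\int_\R h(t)\,dt}{\sqrt{2\pi\abs{\log\eps}}}\,\mu_\eps^\gamma(f) + o_{\prob}(1)
\]
is too weak to yield anything: after multiplying by $\sqrt{\abs{\log\eps}}\to\infty$ the error becomes $o_\prob(\sqrt{\abs{\log\eps}})$, not $o_\prob(1)$. What is actually needed is $\nu_\eps^\gamma(f\otimes g) - (2\pi)^{-1/2}\bigl(\int h\bigr)\,\mu_\eps^\gamma(f)\to 0$ in probability, i.e.\ an estimate already at the $\nu_\eps^\gamma$-scale. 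This is not a cosmetic slip: proving this stronger statement is the whole content, and it will not follow from one-point ``uniform density control of $L_{x,\eps}(\tau)$'' since the required cancellation is a two-point effect.

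The paper avoids any rescaling by working directly at the $\nu_\eps^\gamma$-scale, with half-line test sets $T=(b,\infty)$ rather than general $h$, and with good-event-truncated versions $\tilde{\nu}_\eps^\gamma,\tilde{\mu}_\eps^\gamma$. Proposition~\ref{prop:Cauchy} proves in $L^2$ that
\[
\tilde{\nu}_\eps^\gamma(A\times(b,\infty)) - e^{-\gamma b}\,\tilde{\nu}_\eps^\gamma(A\times(0,\infty)) \to 0
\quad\text{and}\quad
\tfrac{1}{\sqrt{2\pi}\gamma}\,\tilde{\mu}_\eps^\gamma(A) - \tilde{\nu}_\eps^\gamma(A\times(0,\infty)) \to 0\ (\text{up to }p(M));
\]
the first-moment estimates (Propositions~\ref{prop:first moment estimates}, \ref{prop:first moment estimates exponential}) then remove the truncation and upgrade this to $L^1$ (Proposition~\ref{prop:convergence L1}), after which a $\pi$-system argument on rectangles $A\times(b,\infty)$ identifies $\nu^\gamma$ with $(2\pi)^{-1/2}\mu^\gamma(dx)e^{-\gamma t}dt$. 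The two-point input is not a density estimate for $L_{x,\eps}$ but an excursion decomposition around the second point (Lemma~\ref{lem:independence local times and exit point}) which decouples the behaviour near $y$ from the rest of the path and reduces the comparison to an explicit zero-dimensional Bessel computation (Lemma~\ref{lem:Bessel Cauchy}); this is where the relation $l_1(b)e^{\gamma b}=l_1(0)=(1+p(M))\,l_2(M)$ linking the three limiting quantities is actually established.
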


The decomposition of $\nu^\gamma$ and \eqref{eq:th formula first moment} justify that the square root of the local times is the right object to consider. These two properties are very similar to the case of the two-dimensional GFF (see \cite{BiskupLouidor} and \cite{berestycki}, Theorem 2.1 for instance).

Simulations of $\mu^\gamma$ can be seen in Figure \ref{figure}. They have been performed using simple random walk on the square lattice killed when it exits a square composed of $401 \times 401$ vertices. 

\begin{figure}
\subfloat[$\gamma=0.3$]{\includegraphics[scale=0.45,trim={8cm 4cm 6.9cm 7cm},clip]{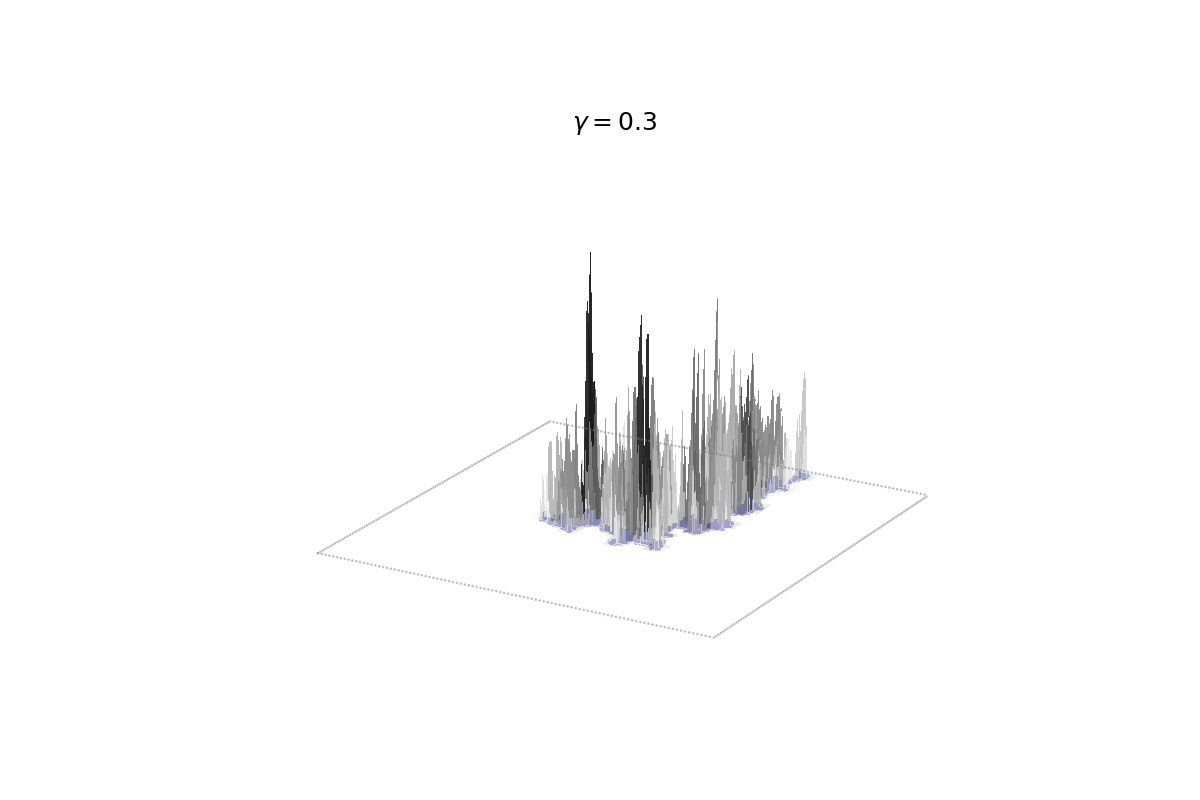}} 
\subfloat[$\gamma=0.8$]{\includegraphics[scale=0.45,trim={8cm 4cm 6.9cm 8cm},clip]{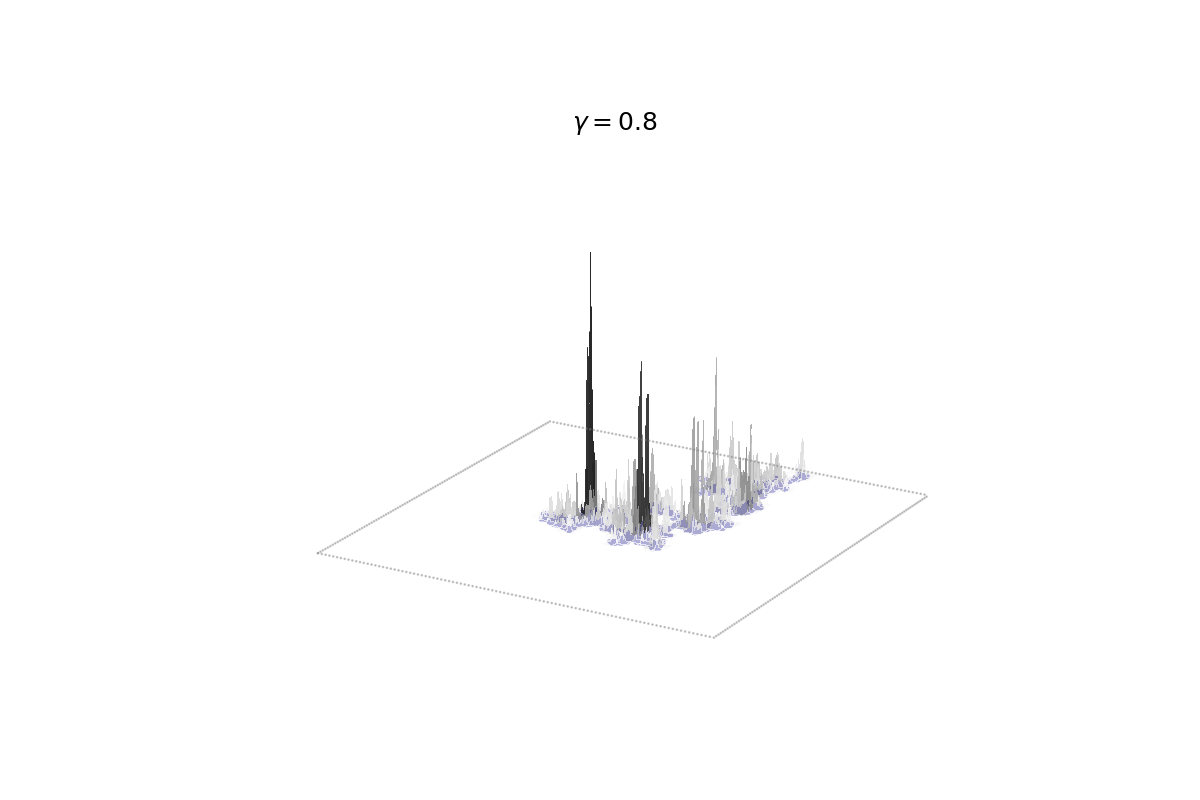}}\\
\subfloat[$\gamma=1.3$]{\includegraphics[scale=0.45,trim={8cm 4cm 6.9cm 8.5cm},clip]{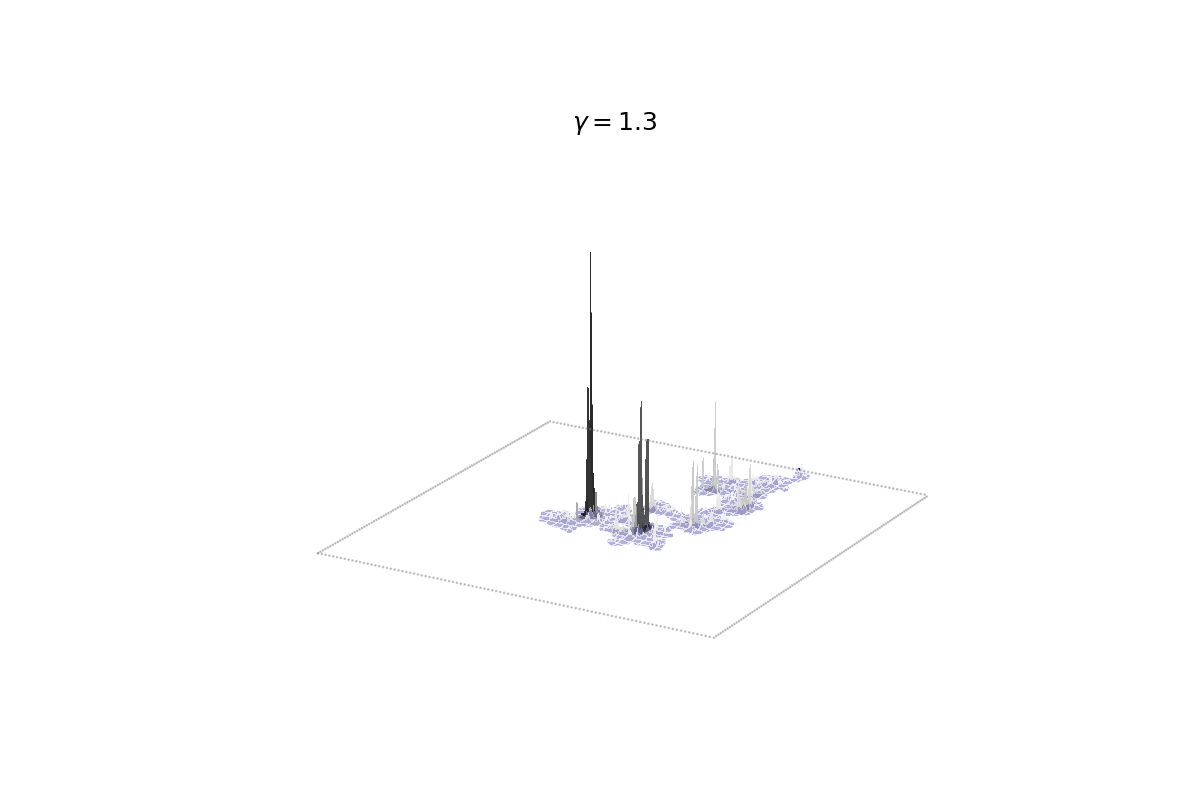}}
\subfloat[$\gamma=1.8$]{\includegraphics[scale=0.45,trim={8cm 4cm 6.9cm 8.5cm},clip]{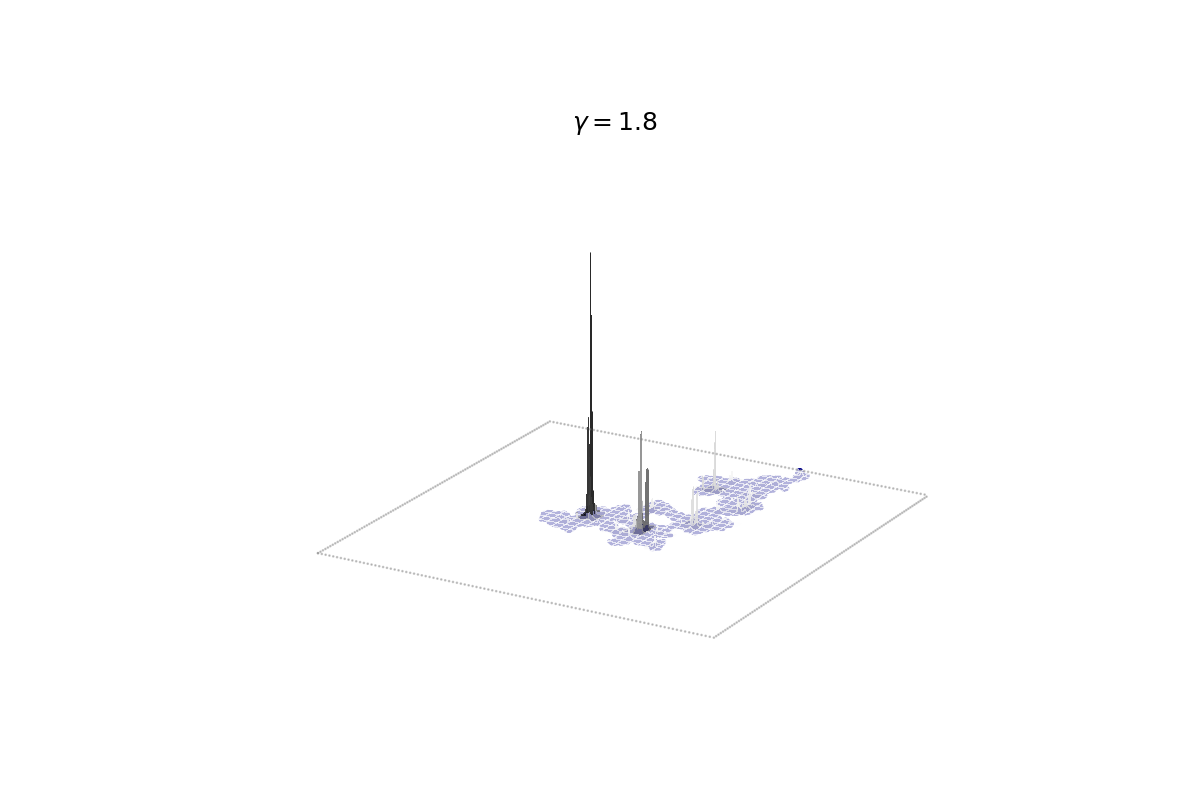}}
\caption{Simulation of $\mu^\gamma$ for $\gamma = 0.3$, $0.8$, $1.3$ and $1.8$, for the same underlying sample of Brownian path which is drawn in blue. The domain $D$ is a square and the starting point $x_0$ is its middle}
\label{figure}
\end{figure}

In \cite{bass1994}, a slight modification of $\nu_\eps^\gamma(dx,(0,\infty))$ was shown to converge for $\gamma \in (0,1)$ and the authors conjectured that the convergence should hold for the whole range $\gamma \in (0,2)$. One part of Theorem \ref{th:convergence measures} settles this question.
Let us also mention that the random measure $\mu^\gamma$ has been constructed very recently in \cite{AidekonHuShi2018} through a very different method. In Section \ref{subsec:relation with other works} below, we explain carefully the relation between the articles \cite{bass1994}, \cite{AidekonHuShi2018} and the current paper.

\begin{remark}
We decided to not include the case $\gamma = 0$ to ease the exposition, but notice that $\nu_\eps^\gamma$ is also a sensible measure in this case. By modifying very few arguments in the proofs of Theorems \ref{th:convergence measures} and \ref{th:identification limits}, one can show that this sequence of random measures converges for the topology of vague convergence on $D \times (0,\infty]$ towards a measure $\nu^0$ which can be decomposed as
\[
\nu^0(dx,dt) = \mu^0(dx)\indic{t = \infty}
\]
for some random Borel measure $\mu^0$ on $D$. With the help of \eqref{eq:prop formula} in Proposition \ref{prop:formula} characterising the measure $\mu^\gamma$, it can be shown that $\mu^0$ is actually $\prob_{x_0}$-a.s. absolutely continuous with respect to the occupation measure of Brownian motion, with a deterministic density. This last observation was already made in \cite{AidekonHuShi2018}, Section 7.
\end{remark}

Define the set of $\gamma$-thick points at level $\eps$ by
\begin{equation}
\label{eq:def thick points level epsilon}
\Tc_\eps^\gamma := \left\{ x \in D: \frac{L_{x,\eps}(\tau)}{\eps (\log \eps)^2} \geq \gamma^2 \right\}.
\end{equation}
This is similar to the notion of thick points in \cite{dembo2001}, except that they look at the occupation measure of small discs rather than small circles.
In \cite{jego2018}, the question to show the convergence of the rescaled number of thick points for the simple random walk on the two-dimensional square lattice was raised. As a direct corollary of Theorems \ref{th:convergence measures} and \ref{th:identification limits}, we answer the analogue of this question in the continuum:

\begin{corollary}\label{cor:convergence thick points}
For all $\gamma \in (0,2)$, we have the following convergence in $L^1$:
\[
\lim_{\eps \to 0} \abs{ \log \eps } \eps^{-\gamma^2/2} \mathrm{Leb} \left( \Tc_\eps^\gamma \right) = \frac{1}{\sqrt{2\pi}\gamma} \mu^\gamma(D)
\]
where $\mathrm{Leb} \left( \Tc_\eps^\gamma \right)$ denotes the Lebesgue measure of $\Tc_\eps^\gamma$.
\end{corollary}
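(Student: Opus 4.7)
The strategy is to recognize the quantity under consideration as the mass of $\nu_\eps^\gamma$ on a natural set and then appeal to Theorems~\ref{th:convergence measures} and~\ref{th:identification limits}. Since $L_{x,\eps}(\tau) \geq 0$, the inequality $L_{x,\eps}(\tau)/(\eps(\log\eps)^2) \geq \gamma^2$ defining $\Tc_\eps^\gamma$ is equivalent to $\sqrt{L_{x,\eps}(\tau)/\eps} - \gamma|\log\eps| \geq 0$, so directly from \eqref{eq:def nu_eps^gamma},
\[
N_\eps := |\log \eps|\, \eps^{-\gamma^2/2} \mathrm{Leb}(\Tc_\eps^\gamma) = \nu_\eps^\gamma\bigl( D \times [0, +\infty] \bigr),
\]
and the candidate limit is $\nu^\gamma(D\times[0,+\infty]) = (2\pi)^{-1/2}\mu^\gamma(D)\int_0^\infty e^{-\gamma t}\,dt = \mu^\gamma(D)/(\sqrt{2\pi}\,\gamma)$, which has finite expectation by \eqref{eq:th formula first moment}.

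The obstacle to a direct application of vague convergence is that the set $D \times [0, +\infty]$ is not relatively compact in $D \times (\R \cup \{+\infty\})$, since it accumulates on $\partial D$. To circumvent this, I would fix $\delta > 0$ and a compact set $K \subset D$, and choose $\phi \in C_c(D)$ with $\indic{K} \leq \phi \leq \indic{D}$, together with $\psi \in C(\R \cup \{+\infty\})$ supported in the compact set $[-\delta,+\infty]$ with $\psi \equiv 1$ on $[0,+\infty]$. Then $\phi \otimes \psi$ is a valid compactly supported test function on $D \times (\R \cup \{+\infty\})$, so Theorems~\ref{th:convergence measures} and~\ref{th:identification limits} give
\[
\int \phi(x)\psi(t)\,\nu_\eps^\gamma(dx, dt) \ \xrightarrow[\eps \to 0]{\text{prob.}}\ \int \phi(x)\psi(t)\,\nu^\gamma(dx, dt).
\]
The sandwich
\[
\nu_\eps^\gamma\bigl( K \times [0, +\infty] \bigr) \ \leq \ \int \phi\psi\, d\nu_\eps^\gamma \ \leq \ N_\eps + \nu_\eps^\gamma\bigl( D \times [-\delta, 0) \bigr)
\]
(and the analogous inequalities for $\nu^\gamma$) then reduces the convergence of $N_\eps$ to a uniform (in small $\eps$) control on $\nu_\eps^\gamma((D \setminus K) \times [0, +\infty])$ and $\nu_\eps^\gamma(D \times [-\delta, 0))$.

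These errors can be handled through first-moment computations. By Fubini,
\[
\EXPECT{x_0}{ \nu_\eps^\gamma(A \times T) } = |\log \eps|\, \eps^{-\gamma^2/2} \int_A \PROB{x_0}{ \sqrt{L_{x,\eps}(\tau)/\eps} - \gamma|\log \eps| \in T } dx,
\]
and this probability is controlled using the classical estimates that the probability for $B$ to hit $\partial D(x, \eps)$ before exiting $D$ is of order $G_D(x_0, x)/|\log \eps|$, and that on this event $L_{x,\eps}(\tau)/\eps$ is asymptotically exponential with mean of order $|\log \eps|$. These inputs---the same ones that lead to \eqref{eq:th formula first moment}---yield $\EXPECT{x_0}{\nu_\eps^\gamma(A \times [a,b])} \to \EXPECT{x_0}{\nu^\gamma(A \times [a,b])}$ for any $a < b$ and Borel $A \subset D$. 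Specialising to $T = [-\delta, 0)$ makes the first error term $O(\delta)$ uniformly in small $\eps$; specialising to $A = D \setminus K$ handles the second error by dominated convergence on the Green-function integral $\int_A R(x,D)^{\gamma^2/2} G_D(x_0, x)\, dx \to 0$ as $K \uparrow D$; and $T = [0, +\infty]$, $A = D$ gives $\EXPECT{x_0}{N_\eps} \to \EXPECT{x_0}{\mu^\gamma(D)}/(\sqrt{2\pi}\,\gamma)$.

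Together these give $N_\eps \to \mu^\gamma(D)/(\sqrt{2\pi}\,\gamma)$ in probability together with convergence of first moments. Since $N_\eps \geq 0$ and the limit has finite expectation, $L^1$ convergence follows from the standard Scheffé-type identity $|N_\eps - N| = N_\eps + N - 2\min(N_\eps, N)$ and dominated convergence applied to $\min(N_\eps, N) \leq N$. The main obstacle is the uniform-in-$\eps$ tail estimates on $\nu_\eps^\gamma(D \times [-\delta, 0))$ and $\nu_\eps^\gamma((D \setminus K) \times [0, +\infty])$; both are quantitative refinements of the first-moment computation underlying Theorem~\ref{th:identification limits} and should be accessible from the same machinery.
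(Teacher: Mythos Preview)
Your argument is correct, but it is more circuitous than the paper's route. The paper does not actually pass through the vague convergence statement of Theorem~\ref{th:convergence measures} at all: the key ingredient is Proposition~\ref{prop:convergence L1}, which establishes directly that for every Borel $A\subset D$ and every $T=(b,\infty)$ the real random variables $\nu_\eps^\gamma(A\times T)$ converge in $L^1$, and that the limit equals $\tfrac{1}{\sqrt{2\pi}\gamma}\mu^\gamma(A)$ when $b=0$. Since $|\log\eps|\,\eps^{-\gamma^2/2}\mathrm{Leb}(\Tc_\eps^\gamma)=\nu_\eps^\gamma(D\times[0,\infty))$ and the boundary set $\{t=0\}$ has zero contribution (for each $x$ the law of $L_{x,\eps}(\tau)$ is atomless, so by Fubini $\nu_\eps^\gamma(D\times\{0\})=0$ a.s.), the corollary follows in one line from Proposition~\ref{prop:convergence L1} with $A=D$, $b=0$.

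Your approach recovers the same conclusion starting only from the \emph{vague} convergence of $\nu_\eps^\gamma$, which is a weaker input: you then have to repair the lack of compactness of $D\times[0,\infty]$ via cutoffs, control the tails $\nu_\eps^\gamma((D\setminus K)\times[0,\infty])$ and $\nu_\eps^\gamma(D\times[-\delta,0))$ through the first-moment estimates of Proposition~\ref{prop:first moment estimates}, and finally upgrade probability convergence to $L^1$ via a Scheff\'e argument. All of these steps are valid and the required first-moment bounds are indeed available (they are exactly \eqref{eq:prop first moment limit} applied with $A=D\setminus K$ or with two values of $b$). The upshot is that your proof is self-contained from the statements of Theorems~\ref{th:convergence measures}--\ref{th:identification limits} alone, whereas the paper's proof exploits the stronger intermediate result (Proposition~\ref{prop:convergence L1}) that was used to prove those theorems in the first place.
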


Despite the strong links between the GFF and the local times, this shows a difference in the structure of the thick points of GFF compare to those of planar Brownian motion which cannot be observed through rougher estimates such as the fractal dimension.
Indeed, for the GFF, the normalisation is $\sqrt{\abs{ \log \eps }} \eps^{-\gamma^2/2}$ instead of $\abs{ \log \eps } \eps^{-\gamma^2/2}$. See \cite{jego2018} for more about this.

As announced earlier, in order to define the measures in \eqref{eq:def mu_eps^gamma} and \eqref{eq:def nu_eps^gamma}, we establish:

\begin{proposition}\label{prop:continuity}
The local time process $L_{x,\eps}(\tau)$, $x \in D$, $0 < \eps < \mathrm{d}(x,\partial D)$, possesses a jointly continuous modification $\tilde{L}_{x,\eps}(\tau)$. In fact, this modification is $\alpha$-H\"older for all $\alpha < 1/3$.
\end{proposition}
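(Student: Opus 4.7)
The plan is to apply the Kolmogorov–Chentsov continuity criterion to the three-parameter random field $\{L_{x,\eps}(\tau):(x,\eps)\in\Omega\}$, where $\Omega:=\{(x,\eps)\in D\times(0,\infty):\overline{B(x,\eps)}\subset D\}$. Since $\Omega$ is three-dimensional, it suffices to establish that for every compact $K\subset\Omega$ and every integer $p\geq 1$,
\[
\EXPECT{x_0}{\abs{L_{x,\eps}(\tau)-L_{x',\eps'}(\tau)}^{2p}} \leq C_{p,K}\bigl(\abs{x-x'}+\abs{\eps-\eps'}\bigr)^{2p/3},
\]
because Kolmogorov's theorem then furnishes a modification that is $\alpha$-H\"older for every $\alpha<1/3-3/(2p)$; letting $p\to\infty$ gives the stated exponent. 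By the triangle inequality, one treats separately the two subcases (A) $x$ fixed and $\eps$ varying, and (B) $\eps$ fixed and $x$ varying.

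For case (A), I would exploit the fact that $R_t:=\abs{B_t-x}$ is a two-dimensional Bessel process, so $\eps\mapsto L_{x,\eps}(\tau)$ is the Ray–Knight local-time profile of $R$ run until the stopping time $\tau$. Applying Tanaka's formula to $(R_t-\eps)^+$ gives a semimartingale expression for $L_{x,\eps}(\tau)$; taking the difference at two levels $\eps,\eps'$ produces a stochastic integral against the indicator of the thin annulus between radii $\eps$ and $\eps'$, plus a bounded-variation term coming from the Bessel drift $(2R_s)^{-1}ds$. The Burkholder–Davis–Gundy inequality combined with straightforward estimates on the occupation time of a thin annulus by a Bessel-$2$ process (which is of order $\abs{\eps-\eps'}$) then yields a moment bound in $\abs{\eps-\eps'}$ that comfortably exceeds the target exponent $2p/3$.

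Case (B) is the main difficulty, because moving $x$ actually deforms the circle on which local time is accumulated, so no analogous one-dimensional pathwise representation is available. My strategy is to estimate the moment directly via a two-point computation: use the strong Markov property at successive entrances of $B$ into $\partial B(x,\eps)\cup\partial B(x',\eps)$ to decompose the pair $(L_{x,\eps}(\tau),L_{x',\eps}(\tau))$ into i.i.d.-like contributions along excursions between the two circles and to $\partial D$. The expected contribution of a single excursion to $L_{x,\eps}-L_{x',\eps}$ is of order $\abs{x-x'}$, because the harmonic measures on the two circles seen from a typical far-away starting point differ in total variation by $O(\abs{x-x'}/\eps)$ while the typical local-time increment per excursion is of order $\eps$. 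To lift the resulting variance-type bound to an $L^{2p}$ bound, I would combine it with uniform polynomial moment estimates on $L_{x,\eps}(\tau)$ itself, which follow from the exponential tails of $\tau$ and the near-geometric structure of the excursion counts.

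The main obstacle I anticipate is in case (B), specifically in obtaining the exponent $2p/3$ (as opposed to a substantially weaker one) uniformly in $\eps$ as $(x,\eps),(x',\eps)$ range over a compact of $\Omega$: this seems to require the cancellations in the two-point formula to be tracked quite carefully, possibly by refining the excursion decomposition using \emph{nested} annuli that enclose both $\partial B(x,\eps)$ and $\partial B(x',\eps)$, so that consecutive excursions can be coupled and their individual contributions to the difference of local times compared directly.
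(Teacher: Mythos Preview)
Your overall scaffolding---Kolmogorov--Chentsov plus the split into (A) radial variation and (B) spatial variation---matches the paper exactly, and your Case~(A) sketch via Tanaka/BDG is a reasonable alternative to what the paper does (the paper instead decomposes into excursions from $\partial D(x,\eps)$ to $\partial D(x,\eta)$ and invokes the identity \eqref{eq:prop local times and Bessel process} with a squared-Bessel moment estimate, obtaining $\EXPECT{x_0}{|L_{x,\eps}(\tau)-L_{x,\delta}(\tau)|^p}\leq C|\eps-\delta|^{p/2}|\log|\eps-\delta||^p$).

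Where you diverge substantially is Case~(B), and your proposal there is incomplete: you describe a two-point excursion computation and correctly flag that extracting the exponent $2p/3$ from it is the hard part, but you do not actually carry it out. The paper sidesteps this difficulty entirely with a monotonicity trick that reduces (B) to (A). Set $\alpha:=|x-y|^{2/3}$ and observe that the annulus $D(y,\eps+\alpha)\setminus D(y,\eps-\alpha)$ is contained in $D(x,\eps+\alpha+|x-y|)\setminus D(x,\eps-\alpha-|x-y|)$, so the corresponding occupation measures satisfy
\[
I_y:=\int_{\eps-\alpha}^{\eps+\alpha}L_{y,r}(\tau)\,dr
\;\leq\;
\int_{\eps-\alpha-|x-y|}^{\eps+\alpha+|x-y|}L_{x,r}(\tau)\,dr=:I_x
\quad\text{a.s.}
\]
One then bounds $\{L_{y,\eps}(\tau)-L_{x,\eps}(\tau)\}_+^p$ by inserting and removing the averages $\tfrac{1}{2\alpha}I_y$ and $\tfrac{1}{2(\alpha+|x-y|)}I_x$. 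The cross term is controlled by the a.s.\ inequality above and contributes $(|x-y|/\alpha)^p=|x-y|^{p/3}$; the two remaining terms compare $L_{\cdot,\eps}$ to a short radial average and are handled by the Case~(A) bound, giving $\alpha^{p/2}|\log\alpha|^p=|x-y|^{p/3}|\log|x-y||^p$. The choice $\alpha=|x-y|^{2/3}$ is exactly what balances these two contributions, and this is where the H\"older exponent $1/3$ comes from.

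So the missing idea in your Case~(B) is this averaging-over-radii argument; it is both simpler and sharper than the direct harmonic-measure perturbation you sketch, and it explains the $1/3$ without any delicate cancellation analysis.
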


The proof of this proposition will be given in Appendix \ref{sec:Appendix continuity local times}. In the rest of the paper, when we write $L_{x,\eps}(\tau)$ we actually always mean its continuous modification $\tilde{L}_{x,\eps}(\tau)$.

\subsection{Relation with other works and further results}\label{subsec:relation with other works}

The construction of measures supported on the thick points of planar Brownian motion was initiated by the work of Bass, Burdzy and Khoshnevisan \cite{bass1994}. The notion of thick points therein is defined through the number of excursions $N_\eps^x$ from $x$ which hit the circle $\partial D(x,\eps)$, before the Brownian motion exits the domain $D$: more precisely, for $a \in (0,2)$, they define the set
\begin{equation}
\label{eq:def thick points BBK}
A_a := \left\{ x \in D: \lim_{\eps \to 0} \frac{N_\eps^x}{\abs{\log \eps}} = a \right\}.
\end{equation}
Note that our parametrisation is somewhat different; it is chosen to match the GMC theory. Informally, the relation between the two is given by $a = \gamma^2/2$.
Next, we recall that the carrying dimension of a measure $\beta$ is the infimum of $d>0$ for which there exists a set $A$ such that $\beta(A^c)=0$ and the Hausdorff dimension of $A$ is equal to $d$. They showed:

\begin{theoremm}[Theorem 1.1 of \cite{bass1994}]\label{thmm:BBK}
Assume that the domain $D$ is the unit disc of $\R^2$ and that the starting point $x_0$ is the origin. For all $a \in (0,1/2)$, with probability one there exists a random measure $\beta_a$, which is carried by $A_a$ and whose carrying dimension is equal to $2 - a$.
\end{theoremm}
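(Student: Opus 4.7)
The plan is to construct $\beta_a$ as an $L^2$ limit of rescaled indicator measures, in the spirit of Kahane's construction of subcritical Gaussian multiplicative chaos. For each dyadic scale $\eps_n = 2^{-n}$, set
\[
\beta_a^n(dx) := Z_n \indic{\abs{N_{\eps_n}^x - a \abs{\log \eps_n}} \leq K}\, dx,
\]
with $K$ a fixed window and $Z_n$ a deterministic normalization. Decomposing the Brownian path into traversals of the nested annuli $\{ e^{-k-1} \leq \abs{z-x} \leq e^{-k} \}$ produces a critical Galton--Watson-like tree of excursions around $x$, and a Cram\'er-type local large-deviation estimate on this tree gives
\[
\Prob{\abs{N_\eps^x - a \abs{\log \eps}} \leq K} \asymp \eps^a
\]
up to sub-polynomial corrections. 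This fixes $Z_n \asymp \eps_n^{-a}$, so that $\Expect{\beta_a^n(D)} = O(1)$.

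The heart of the argument is a matching two-point bound. For $x, y \in D$ with $\abs{x-y} =: r > 2\eps$, the excursion trees around $x$ and around $y$ share a common trunk above scale $r$ and are asymptotically independent below it. Conditioning on the number of excursions at scale $r$ and tracking the ballot-type constraint that both subtrees land at the atypical level $a \abs{\log \eps}$ yields
\[
\Prob{\abs{N_\eps^x - a \abs{\log \eps}} \leq K,\ \abs{N_\eps^y - a \abs{\log \eps}} \leq K} \lesssim \eps^{2a} r^{-a}
\]
up to logarithmic factors, hence $\Expect{\beta_a^n(D)^2} \lesssim \iint_{D \times D} \abs{x-y}^{-a}\, dx\, dy < \infty$. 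Upgrading this uniform bound to a Cauchy-in-$L^2$ statement along the dyadic scales requires controlling the conditional variance of $\beta_a^{n'}$ given the excursion structure above scale $\eps_n$; the corresponding cascade-type moment cancellation closes precisely when $a < 1/2$, which is the $L^2$-phase. The limit $\beta_a$ is then extracted, and a standard argument shows it is independent of $K$ and of the dyadic subsequence.

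That $\beta_a$ is supported on $A_a$ follows by applying the two-point bound to the mass $\beta_a(\{ x \in D : \abs{N_{\eps_n}^x / \abs{\log \eps_n} - a} > \delta \})$ and invoking Borel--Cantelli along a sparse subsequence of scales, then letting $\delta \downarrow 0$. For the carrying dimension, the upper bound $\leq 2-a$ comes from a first-moment covering argument: the approximate $a$-thick set at scale $\eps$ can be covered by $O(\eps^{-(2-a)})$ balls of radius $\eps$ on average. The matching lower bound follows from Frostman's lemma applied to $\beta_a$: the two-point upper bound implies that the $s$-energy $\iint \abs{x-y}^{-s}\, \beta_a(dx)\, \beta_a(dy)$ is almost surely finite for every $s < 2-a$.

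The principal obstacle is the sharp two-point estimate: the trunk shared above the separation scale $r$ couples the two excursion trees in a delicate way, and the ballot-type conditioning that forces each subtree to end up at the atypical level $a \abs{\log \eps}$ demands careful exponent bookkeeping. This is also where the restriction $a < 1/2$ arises essentially; it corresponds precisely to the subcritical $L^2$-phase $\gamma = \sqrt{2a} < 1$ of the Gaussian multiplicative chaos analogue considered in Theorem~\ref{th:convergence measures}.
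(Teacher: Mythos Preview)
This theorem is not proved in the present paper at all: it is quoted verbatim as Theorem~1.1 of \cite{bass1994} and serves only as background. There is therefore no ``paper's own proof'' to compare against. What the paper does say about the original argument (see Section~\ref{subsec:relation with other works}) is that Bass, Burdzy and Khoshnevisan build $\beta_a$ as a limit of approximating measures $\beta_a^\eps$ defined through \emph{local times of circles}, essentially $\nu_\eps^\gamma(dx,(0,\infty))$ up to normalisation, and obtain convergence via second-moment bounds.

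Your sketch is broadly in the right spirit (an $L^2$ construction with a two-point estimate, then Frostman for the energy lower bound), but two points deserve correction. First, you base the approximating measures on the excursion counts $N_\eps^x$ rather than on local times; the paper indicates that \cite{bass1994} works with the latter, which is technically cleaner because the local time of a single excursion is an exact exponential variable (cf.\ Section~\ref{sec:preliminaries} here) whereas $N_\eps^x$ requires an extra layer of large-deviation analysis. Second, and more substantively, your claim that ``the cascade-type moment cancellation closes precisely when $a<1/2$, which is the $L^2$-phase'' is wrong on both counts. The paper states explicitly that the $L^2$-phase is $a\in(0,1)$ (equivalently $\gamma\in(0,\sqrt{2})$), and that the range $a\in(0,1/2)$ obtained in \cite{bass1994} is a \emph{strict subset} of it. So the restriction $a<1/2$ in Theorem~\ref{thmm:BBK} is not where the second moment blows up; it is an artefact of the particular estimates in \cite{bass1994}, and your explanation of why the argument stops there does not match the actual mechanism.
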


In \cite{bass1994}, the measure $\beta_a$ is constructed as the limit of measures $\beta_a^\eps$ as $\eps \to 0$ which are defined in a very similar manner as our measures $\nu_\eps^\gamma(dx, (0,\infty))$ using local times of circles (see the beginning of Section 3 of \cite{bass1994}). We emphasise here the difference of renormalisation: the local times they consider are half of our local times. We also mention that the range $\{ a \in (0,1/2) \}$ for which they were able to show the convergence of $\beta_a^\eps$ is a strict subset of the so-called $L^2$-phase of the GMC, which would correspond to $\{ a \in (0,1) \}$ or  $\{ \gamma \in (0,\sqrt{2}) \}$. This is the region where $\beta_a^\eps(D)$ is bounded in $L^2$, see Theorem 3.2 of \cite{bass1994}.

Bass, Burdzy and Khoshnevisan also gave an effective description of their measure $\beta_a$ in terms of a Poisson point process of excursions. More precisely, they define a probability distribution $\Q_{x,D}^{x_0,a}$ (written $\Q_a^x$ in \cite{bass1994}, defined just before Proposition 5.1 of \cite{bass1994}) on continuous trajectories which can be understood heuristically as follows. The trajectory of a process under $\Q_{x,D}^{x_0,a}$ is composed of three independent parts. The first one is a Brownian motion starting from $x_0$ conditioned to visit $x$ before exiting $D$ and killed at the hitting time of $x$. The third part is a Brownian motion starting from $x$ and killed when it exits for the first time $D$. The second part is composed of an infinite number of excursions from $x$ generated by a Poisson point process with the intensity measure being the product of the Lebesgue measure on $[0,a]$ and an excursion law. In Proposition 5.1 of \cite{bass1994}, they roughly speaking show that the law of the Brownian motion conditioned on the fact that $x$ has been sampled according to $\beta_a$ is $\Q_{x,D}^{x_0,a}$. This characterises their measure $\beta_a$ (Theorem 5.2 of \cite{bass1994}). Once Theorems \ref{th:convergence measures} and \ref{th:identification limits} above are established, we can adapt their arguments for the proof of characterisation to conclude the same thing for our measure $\mu^\gamma$: see Proposition \ref{prop:formula} for a precise statement.
A consequence of Proposition \ref{prop:formula} is the identification of our measure $\mu^\gamma$ with their measure $\beta_a$:

\begin{corollary}\label{cor:mu=beta}
If the domain $D$ is the unit disc, $x_0$ the origin, $\gamma \in (0,1)$ and $a = \gamma^2/2$, we have $\prob_{x_0}$-a.s. $\mu^\gamma = \sqrt{2\pi} \gamma \beta_a$.
\end{corollary}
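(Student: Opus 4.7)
The plan is to identify $\mu^\gamma$ and $\beta_a$ through the Palm-type characterizations available for both measures. Proposition \ref{prop:formula} describes $\mu^\gamma$ in terms of the conditional laws $\Q_{x,D}^{x_0,a}$, while Theorem 5.2 of \cite{bass1994} provides the analogous characterization of $\beta_a$. In both cases, the statement amounts to an identity of the form: for every bounded measurable functional $F$ on $D\times C([0,\tau],\R^2)$,
\[
\expect_{x_0}\!\left[\int_D F\!\left(x,(B_t)_{t\leq \tau}\right) \rho(dx)\right] \;=\; \int_D m_\rho(x)\, \EXPECTB{\Q_{x,D}^{x_0,a}}{F(x,B)}\, dx,
\]
with $\rho\in\{\mu^\gamma,\beta_a\}$, $a=\gamma^2/2$, and a suitable deterministic intensity $m_\rho$.

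Such an identity determines any $\sigma(B_t, t\leq \tau)$-measurable random measure uniquely almost surely: choosing $F(x,B)=g(x)h(B)$ and using that $\int g\,d\rho$ is itself $\sigma(B)$-measurable, one deduces that the right-hand side pins down $\expect_{x_0}[(\int g\,d\rho)\,h(B)]$ for every bounded $g,h$, hence $\int g\,d\rho$ a.s. for each $g$, and therefore $\rho$ itself. Since both $\mu^\gamma$ (as a vague limit of the $\mu^\gamma_\eps$) and $\beta_a$ are measurable functions of the Brownian path, two random measures with matching $\Q_{x,D}^{x_0,a}$-components can differ only through their intensities $m_\rho$.

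It therefore suffices to match the two intensities, which are precisely the first moment measures $\expect_{x_0}[\rho(dx)]$. Formula \eqref{eq:th formula first moment}, specialised to $D$ the unit disc and $x_0=0$ (so that $R(x,D)=1-\abs{x}^2$), gives $\expect_0[\mu^\gamma(dx)] = \sqrt{2\pi}\gamma\, (1-\abs{x}^2)^{\gamma^2/2} G_D(0,x)\, dx$. The corresponding first moment of $\beta_a$ can be read off from BBK's construction of $\beta_a^\eps$ as the appropriately normalized integral of an excursion-count indicator: a direct computation using the hitting probability of $\partial D(x,\eps)$ from the origin, combined with a Ray--Knight-type identity, yields $\expect_0[\beta_a(dx)] = (1-\abs{x}^2)^a G_D(0,x)\, dx$ with $a=\gamma^2/2$. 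The ratio of the densities is the deterministic constant $\sqrt{2\pi}\gamma$, whence $\mu^\gamma=\sqrt{2\pi}\gamma\,\beta_a$ a.s.

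The main obstacle is the bookkeeping needed to reconcile the different normalizations used in the two works: BBK parametrize thick points through the excursion count $N_\eps^x/\abs{\log\eps}\to a$, whereas we work with $\sqrt{L_{x,\eps}(\tau)/\eps}/\abs{\log\eps}\to \gamma$, and moreover the local times in \cite{bass1994} are half of ours. Tracking these conventions through the excursion decomposition underlying $\Q_{x,D}^{x_0,a}$ and through the first-moment calculation is what produces the precise prefactor $\sqrt{2\pi}\gamma$; it also explains the restriction $\gamma\in(0,1)$, matching the range $a\in(0,1/2)$ covered by Theorem \ref{thmm:BBK}.
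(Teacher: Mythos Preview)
Your proposal is correct and follows essentially the same route as the paper: both $\mu^\gamma$ and $\sqrt{2\pi}\gamma\,\beta_a$ satisfy the Palm-type identity of Proposition~\ref{prop:formula} (the latter by Theorem~5.2 of \cite{bass1994}), and uniqueness of a path-measurable measure satisfying such a formula yields the equality. The only cosmetic difference is that you split off the intensity matching as a separate first-moment computation, whereas the paper observes directly that Theorem~5.2 of \cite{bass1994} already states the Palm formula for $\beta_a$ with intensity $R(x,D)^a G_D(x_0,x)$, so no extra computation is needed; your sketch of the uniqueness argument is exactly what is behind Remark~5.2(i) of \cite{bass1994}, which the paper simply cites.
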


A consequence of Theorem \ref{thmm:BBK} is a lower bound on the Hausdorff dimension of the set of thick points $A_a$: for all $a \in (0,1/2)$, a.s. $\dim(A_a) \geq 2-a$. The upper bound they obtained (\cite{bass1994}, Theorem 1.1 (ii)) is: for all $a > 0$, a.s. $\dim(A_a) \leq \max(0,2 - a/e)$. They conjectured that the lower bound is sharp and holds for all $a \in (0,2)$. In 2001, Dembo, Peres, Rosen and Zeitouni \cite{dembo2001} answered positively the analogue of this question for thick points defined through the occupation measure of small discs:
\begin{equation}
\label{eq:def thick points DPRZ}
\Tc_a := \left\{ x \in D: \lim_{\eps \to 0} \frac{1}{\eps^2 (\log \eps)^2} \int_0^{\tau} \indic{B_t \in D(x,\eps)} dt = a \right\}.
\end{equation}
In particular, their result went beyond the $L^2$-phase to cover the entire $L^1$-phase. This allowed them to solve a conjecture by Erd\H{o}s and Taylor \cite{erdos_taylor1960}. 
\medskip

Very recently, Aïdékon, Hu and Shi \cite{AidekonHuShi2018} made a link between the definitions of thick points of \cite{bass1994} and \cite{dembo2001} (defined in \eqref{eq:def thick points BBK} and \eqref{eq:def thick points DPRZ} respectively) by constructing measures supported on these two sets of thick points. Their approach is superficially very different from ours but we will see that the measure $\mu^\gamma$ we obtained is, perhaps surprisingly, related to theirs in a strong way (Corollary \ref{cor:mu=Mc} below). Their measure is defined through a martingale approach for which the interpretation of the approximation is not immediately transparent (see \cite{AidekonHuShi2018} (4.1), (4.2) and Corollary 3.6).

Let us describe this relation in more details. 
For technical reasons, in \cite{AidekonHuShi2018}, the boundary $\partial D$ of $D$ is assumed to be a finite union of analytic curves. To compare our results with theirs, we will also make this extra assumption in the following and we will call such a domain a nice domain.
Consider $z \in \partial D$ a boundary point such that the boundary of $D$ is analytic locally around $z$; we will call such a point a nice point. They denote by $\prob_D^{x_0,z}$ the law of a Brownian motion starting from $x_0$ and conditioned to exit $D$ through $z$. They showed:
 
\begin{theoremm}[Theorem 1.1 of \cite{AidekonHuShi2018}]
For all $a \in (0,2)$, with $\prob_D^{x_0,z}$-probability one there exists a random measure $\Mc_\infty^a$ which is carried by $A_a$ and by $\Tc_a$ and whose carrying dimension is equal to $2-a$.
\end{theoremm}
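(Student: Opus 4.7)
The plan is to set $\gamma := \sqrt{2a} \in (0,2)$ and to define $\Mc_\infty^a := (\sqrt{2\pi}\,\gamma)^{-1}\mu^\gamma$, with $\mu^\gamma$ the measure produced by Theorem \ref{th:convergence measures} (the normalisation being chosen to match Corollary \ref{cor:convergence thick points}). Because $z$ is a nice boundary point, $\prob_D^{x_0,z}$ is a Doob $h$-transform of $\prob_{x_0}$ by the Poisson kernel of $D$ at $z$, with bounded positive density on compact subdomains of $D$; the convergence in probability of $\mu_\eps^\gamma$ given by Theorem \ref{th:convergence measures} therefore transfers to $\prob_D^{x_0,z}$ and $\Mc_\infty^a$ is well-defined almost surely.

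To show that $\Mc_\infty^a$ is carried by $A_a \cap \Tc_a$, I would use the Peyrière description of $\mu^\gamma$ supplied by Proposition \ref{prop:formula}: under the Campbell-type measure $Q_{\mu^\gamma}(dx,d\omega) := \EXPECT{x_0}{\mu^\gamma(dx)\,\indic{\omega \in \cdot}}$ the marginal in $x$ is $\EXPECT{x_0}{\mu^\gamma}$ from \eqref{eq:th formula first moment}, and the conditional law of the trajectory given $x$ is $\Q_{x,D}^{x_0,a}$, whose excursion component is a Poisson point process from $x$ of intensity $\mathrm{Leb}_{[0,a]}\otimes n^{\mathrm{exc}}_x$. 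A law of large numbers for this point process gives $N_\eps^x / \abs{\log \eps} \to a$ $Q_{\mu^\gamma}$-almost surely, and, via the deterministic asymptotic relations between $N_\eps^x$, $L_{x,\eps}(\tau)$ and the occupation time of the disc $D(x,\eps)$, also $\eps^{-2}(\log\eps)^{-2}\int_0^\tau \indic{B_t \in D(x,\eps)}\,dt \to a$. Fubini together with the joint continuity of Proposition \ref{prop:continuity} (used to pass from ``$\mu^\gamma$-a.e.\ $x$, almost surely'' to ``almost surely, $\mu^\gamma$-a.e.\ $x$'') then yields $\mu^\gamma(D \setminus (A_a \cap \Tc_a)) = 0$ almost surely.

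The upper bound on the carrying dimension is immediate from the inclusion $\mathrm{supp}\,\Mc_\infty^a \subset \Tc_a$ together with $\dim_H(\Tc_a) \le 2-a$, which is already proved in \cite{dembo2001}. For the matching lower bound one reduces, by the standard Frostman argument, to proving the Hölder-type regularity $\Mc_\infty^a(B(x,r)) \le C(\omega,\delta)\, r^{2-a-\delta}$ for $\Mc_\infty^a$-a.e.\ $x$ and every $\delta>0$; any set of Hausdorff dimension $<2-a-\delta$ is then $\Mc_\infty^a$-null. By Borel--Cantelli along a geometric sequence of radii, this in turn reduces to the fractional moment bound
\[
\EXPECT{x_0}{\mu^\gamma(B(x,r))^q} \le C\, r^{\xi(q)}, \qquad \xi(q) = 2q + \tfrac{\gamma^2}{2}q(1-q),
\]
for some $q>1$ close enough to $1$ that $\xi(q)/q > 2-a-\delta$, which is possible since $\xi(q)/q \to 2-\gamma^2/2 = 2-a$ as $q\to 1^+$.

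The principal obstacle is this last moment bound in the full subcritical regime $\gamma\in(0,2)$, beyond the $L^2$-phase $\gamma\in(0,\sqrt 2)$ reached in \cite{bass1994}. The approach I would take is to iterate the size-biasing of Proposition \ref{prop:formula}: writing $\mu^\gamma(B(x,r))^q$ as $\mu^\gamma(B(x,r))^{q-1}\cdot\int_{B(x,r)} \mu^\gamma(dy)$ and integrating the outer factor against the Campbell measure replaces $\prob_{x_0}$ by $\Q_{y,D}^{x_0,a}$, reducing to a $(q-1)$-th moment estimate under a measure carrying one additional excursion soup. The induced parameter stays subcritical as long as $\gamma<2$, so the recursion closes; alternatively, one can use the Dynkin-type isomorphism alluded to after \eqref{eq:def mu_eps^gamma} to compare $\mu^\gamma$ with two-dimensional GFF-GMC and import the GMC moment bounds, which is where I would expect the main technical effort to lie.
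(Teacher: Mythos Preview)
This theorem is not proved in the paper at all: it is quoted verbatim as Theorem 1.1 of \cite{AidekonHuShi2018} and used as an external black box. The measure $\Mc_\infty^a$ is \emph{their} construction, obtained via a martingale/Radon--Nikodym approach under $\prob_D^{x_0,z}$; the present paper does not reprove any part of it. On the contrary, the logical flow here is the reverse of what you propose: the paper first builds $\mu^\gamma$ independently, then identifies it with (a density times) $\Mc_\infty^a$ in Corollary \ref{cor:mu=Mc} via the common characterisation through $\Q_{x,D}^{x_0,z,a}$, and finally \emph{imports} properties (i)--(iv) of $\mu^\gamma$ in Corollary \ref{cor:properties mu} from the properties of $\Mc_\infty^a$ already established in \cite{AidekonHuShi2018}. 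Your proposal to set $\Mc_\infty^a := (\sqrt{2\pi}\gamma)^{-1}\mu^\gamma$ therefore inverts the dependency structure and does not correspond to anything done in the paper. Note also that the identification in Corollary \ref{cor:mu=Mc} carries the non-trivial density $H_D(x_0,z)/H_D(x,z)$, so $\Mc_\infty^a$ is not simply a constant multiple of $\mu^\gamma$.

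As an independent attempt to prove existence of \emph{some} measure with the stated carrying properties, your outline is reasonable in spirit but has a genuine gap precisely where you flag it: the fractional moment bound $\EXPECT{x_0}{\mu^\gamma(B(x,r))^q}\le C r^{\xi(q)}$ for $q>1$ in the full range $\gamma\in(0,2)$. Neither of the two routes you sketch is carried out. The ``iterate the size-biasing'' argument does not close as stated: one application of Proposition \ref{prop:formula} replaces $\prob_{x_0}$ by $\Q_{y,D}^{x_0,a}$, but the remaining factor $\mu^\gamma(B(x,r))^{q-1}$ is still the \emph{same} measure $\mu^\gamma$ under a different law, not a GMC with shifted parameter, so there is no obvious recursion in $\gamma$. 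The isomorphism route would require controlling the interaction between the Brownian local-time field and a GFF in a way the paper does not develop. The paper itself remarks (after Corollary \ref{cor:properties mu}) that (i)--(iii) could in principle be obtained from Proposition \ref{prop:formula} and the second moment computations of Proposition \ref{prop:bdd in L2}, but it does not do so, and it explicitly concedes that conformal invariance is not clear without \cite{AidekonHuShi2018}.
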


Their starting point is the interpretation of the measure $\beta_a$ of \cite{bass1994} described above in terms of Poisson point process of excursions. For $x \in D$, they define a measure $\Q_{x,D}^{x_0,z,a}$ on trajectories similar to $\Q_{x,D}^{x_0,a}$ mentioned above: the only difference is that the last part of the trajectory is a Brownian motion conditioned to exit the domain through $z$. In a nutshell, they show the absolute continuity of $\Q_{x,D}^{x_0,z,a}$ with respect to $\prob_D^{x_0,z}$ (restricted to the event that the Brownian path stays away from $x$) and define a sequence of measures using the Radon-Nikodym derivative. Their convergence relies on martingales argument rather than on computations on moments. As in \cite{bass1994}, they obtain a characterisation of their measure in terms of $\Q_{x,D}^{x_0,z,a}$ (\cite{AidekonHuShi2018}, Proposition 5.1) matching with ours (Proposition \ref{prop:formula}). As a consequence, we are able to compare their measure with ours.

Before stating this comparison, let us notice that we can also make sense of our measure $\mu^\gamma$ for the Brownian motion conditioned to exit $D$ through $z$. Indeed, as noticed in \cite{bass1994}, Remark 5.1 (i), our measure $\mu^\gamma$ is measurable with respect to the Brownian path and defined locally. $\mu^\gamma$ is thus well defined for any process which is locally mutually absolutely continuous with respect to the two dimensional Brownian motion killed when it exits for the first time the domain $D$. The Brownian motion conditioned to exit $D$ through $z$ being such a process, $\mu^\gamma$ makes sense under $\prob_D^{x_0,z}$ as a measure on $D$.

\begin{corollary}\label{cor:mu=Mc}
Let $z \in \partial D$ be a nice point and denote by $H_D(x,z)$ the Poisson kernel of $D$ from $x$ at $z$, that is the density of the harmonic measure $\PROB{x}{B_\tau \in \cdot}$ with respect to the Lebesgue measure of $\partial D$ at $z$.
For all $\gamma \in (0,2)$, if $a = \gamma^2/2$, we have $\prob_D^{x_0,z}$-a.s.,
\[
\mu^\gamma(dx) = \sqrt{2\pi}\gamma \frac{H_D(x_0,z)}{H_D(x,z)} \Mc_\infty^a(dx).
\]
\end{corollary}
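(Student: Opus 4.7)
The strategy is to derive a Palm-type characterisation of $\mu^\gamma$ under $\prob_D^{x_0,z}$ from Proposition \ref{prop:formula}, and then compare it term by term with the analogous characterisation of $\Mc_\infty^a$ provided by Proposition 5.1 of \cite{AidekonHuShi2018}.

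I start from Proposition \ref{prop:formula}, applied to test functionals of the form $F(B) = G(B)\, h(B_\tau)$ with $h$ continuous on $\partial D$ and $G$ a bounded measurable function of the path. Under $\Q_{x,D}^{x_0,a}$, the final segment of the trajectory is a free Brownian motion started at $x$ and killed at $\tau$, so the exit law of $B_\tau$ has density $H_D(x,\cdot)$ on $\partial D$, and conditioning this last segment on $\{B_\tau = y\}$ turns $\Q_{x,D}^{x_0,a}$ into $\Q_{x,D}^{x_0,y,a}$. Disintegrating the left-hand side along $\prob_{x_0} = \int_{\partial D} H_D(x_0,y)\, \prob_D^{x_0,y}\, dy$ and identifying the integrands in $y$ (using the analyticity of $\partial D$ near the nice point $z$ to evaluate pointwise at $y = z$), one obtains
\begin{equation*}
\mathbb{E}_D^{x_0,z}\!\left[\int_D f(x)\, G(B)\, \mu^\gamma(dx)\right] = \sqrt{2\pi}\,\gamma \int_D f(x)\, R(x,D)^{\gamma^2/2}\, \frac{G_D(x_0,x)\, H_D(x,z)}{H_D(x_0,z)}\, \EXPECTB{\Q_{x,D}^{x_0,z,a}}{G(B)}\, dx.
\end{equation*}

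Next I invoke Proposition 5.1 of \cite{AidekonHuShi2018}, which, as noted in the discussion above, gives the analogous Palm identity for $\Mc_\infty^a$ under $\prob_D^{x_0,z}$ with the same trajectory factor $\EXPECTB{\Q_{x,D}^{x_0,z,a}}{G(B)}$. Dividing the two spatial densities integrand by integrand, the trajectory part cancels, and what remains is precisely the scalar function $\sqrt{2\pi}\,\gamma\, H_D(x_0,z)/H_D(x,z)$. To promote the coincidence of Palm identities (for all bounded measurable $f$ and $G$) into a $\prob_D^{x_0,z}$-a.s. identity of random measures, I would run the same uniqueness argument as in Theorem 5.2 of \cite{bass1994}, which underpins Proposition \ref{prop:formula}: a random measure is determined a.s. by its Palm description.

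The main obstacle is the bookkeeping of normalisations. Both \cite{bass1994} and \cite{AidekonHuShi2018} use the half-local-time convention and the parameter $a$ in place of $\gamma$, so the precise form of Proposition 5.1 of \cite{AidekonHuShi2018} must be carefully translated into our normalisation before the two spatial densities can be compared and the constant $\sqrt{2\pi}\,\gamma$ extracted. Once this translation is done, the Poisson kernel ratio $H_D(x_0,z)/H_D(x,z)$ appears naturally as the Doob $h$-transform factor relating $\prob_{x_0}$ to $\prob_D^{x_0,z}$ at the root point $x$ of the Palm decomposition.
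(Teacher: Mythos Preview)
Your proposal is correct and follows essentially the same route as the paper: derive a Palm-type identity for $\mu^\gamma$ under $\prob_D^{x_0,z}$ from Proposition~\ref{prop:formula}, quote the analogous identity for $\Mc_\infty^a$ from \cite[Proposition~5.1]{AidekonHuShi2018}, and then invoke the uniqueness principle of \cite[Remark~5.2(i)]{bass1994} to upgrade equality of Palm formulas to $\prob_D^{x_0,z}$-a.s.\ equality of random measures.

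The only real difference is cosmetic. To pass from $\prob_{x_0}$ to $\prob_D^{x_0,z}$, you disintegrate against boundary test functions $h(B_\tau)$ and then identify integrands pointwise at $y=z$; the paper instead plugs $f(x,B)\mathbf 1_{\{|B_\tau-z|\le r\}}$ into Proposition~\ref{prop:formula} and uses the limit relations~\eqref{eq:relations different probability measures} together with dominated convergence. The paper's route avoids having to argue continuity of the two integrands in the boundary variable $y$, which your approach needs in order to evaluate at the specific point $z$ rather than just a.e.\ $y$. Both lead to the same conditioned identity, and your displayed formula (with the factor $H_D(x,z)/H_D(x_0,z)$) is exactly what the limit computation produces, via the ratio $\Q_{x,D}^{x_0,a}(|B_\tau-z|\le r)\big/\prob_{x_0}(|B_\tau-z|\le r)\to H_D(x,z)/H_D(x_0,z)$.

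Your final paragraph overstates the remaining difficulty: no half-local-time translation is needed at this stage, since Proposition~\ref{prop:formula} is already in the paper's conventions and \cite[Proposition~5.1]{AidekonHuShi2018} is a self-contained statement about $\Mc_\infty^a$; one simply writes it down and compares spatial densities.
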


In particular, our measure $\mu^\gamma$ inherits some properties of the measure $\Mc_\infty^a$ obtained in \cite{AidekonHuShi2018}. Recalling the definitions \eqref{eq:def thick points BBK} and \eqref{eq:def thick points DPRZ} of the two sets of thick points $A_a$ and $\Tc_a$, we have:

\begin{corollary}\label{cor:properties mu}
For all $\gamma \in (0,2)$, the following properties hold:

(i) Non-degeneracy: with $\prob_{x_0}$-probability one, $\mu^\gamma(D) >0$.

(ii) Thick points: with $\prob_{x_0}$-probability one, $\mu^\gamma$ is carried by $A_{\gamma^2/2}$ and by $\Tc_{\gamma^2/2}$.

(iii) Hausdorff dimension: with $\prob_{x_0}$-probability one, the carrying dimension of $\mu^\gamma$ is $2 - \gamma^2/2$.

(iv) Conformal invariance: if $\phi : D \to D'$ is a conformal map between two nice domains, $x_0 \in D$, and if we denote by $\mu^{\gamma,D}$ and $\mu^{\gamma,D'}$ the measures built in Theorem 1 for the domains $(D,x_0)$ and $(D',\phi(x_0))$ respectively, we have
\[
\left( \mu^{\gamma,D} \circ \phi^{-1} \right) (dx)
\overset{\mathrm{law}}{=}
\abs{ \phi'(\phi^{-1}(x)) }^{2 + \gamma^2/2} \mu^{\gamma,D'}(dx).
\]
\end{corollary}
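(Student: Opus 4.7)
The natural strategy is to use Corollary \ref{cor:mu=Mc} to transfer to $\mu^\gamma$ the properties of $\Mc_\infty^a$ established in \cite{AidekonHuShi2018} for items (i), (ii) and (iii); item (iv) is treated separately, starting from the characterisation of $\mu^\gamma$ provided by Proposition \ref{prop:formula} combined with the conformal invariance of planar Brownian motion.

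For (i)--(iii) I would fix a nice boundary point $z \in \partial D$. Under $\prob_D^{x_0,z}$, Corollary \ref{cor:mu=Mc} gives
\[
\mu^\gamma(dx) = \sqrt{2\pi}\gamma\,\frac{H_D(x_0,z)}{H_D(x,z)}\,\Mc_\infty^a(dx),
\]
and the prefactor is a strictly positive, continuous, deterministic function of $x$. Hence $\mu^\gamma$ and $\Mc_\infty^a$ are mutually absolutely continuous, share the same support, are simultaneously non-zero, and share the same carrying Hausdorff dimension. The properties of $\Mc_\infty^a$ from \cite{AidekonHuShi2018} (non-degeneracy, $\Mc_\infty^a$ carried by $A_{\gamma^2/2}$ and $\Tc_{\gamma^2/2}$, carrying dimension $2-\gamma^2/2$) therefore imply (i), (ii) and (iii) under $\prob_D^{x_0,z}$ for every nice $z$. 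To pass to $\prob_{x_0}$ I would disintegrate along the exit point,
\[
\prob_{x_0}(\cdot) = \int_{\partial D} \prob_D^{x_0,z}(\cdot)\,H_D(x_0,z)\,dz,
\]
which is legitimate since $\partial D$ is a finite union of analytic curves so that almost every boundary point $z$ is nice. Each of the almost sure statements (i), (ii), (iii) then holds $\prob_{x_0}$-almost surely.

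For (iv), the plan is to exploit Proposition \ref{prop:formula}: the measure $\mu^\gamma$ is uniquely characterised through the weighting $G_D(x_0,x) R(x,D)^{\gamma^2/2}\,dx$ and the description $\Q_{x,D}^{x_0,a}$ of the decorated Brownian path. Each of the three independent pieces building $\Q_{x,D}^{x_0,a}$---a Brownian motion conditioned to hit $x$ before exiting $D$, an $a$-Poissonian collection of Brownian excursions from $x$ in $D$, and a Brownian motion from $x$ killed on $\partial D$---is conformally invariant up to a deterministic time change, and the parameter $a$, counting excursions, is preserved because the Brownian excursion measure is itself conformally invariant up to reparametrisation. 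Combined with the transformation rules $G_D(x_0,x) = G_{D'}(\phi(x_0),\phi(x))$, $R(\phi(x),D') = |\phi'(x)| R(x,D)$ and the Jacobian $dx = |(\phi^{-1})'(y)|^2\,dy$, the characterising weighting on $D$ translates into the weighting on $D'$ times the announced density factor $|(\phi^{-1})'(y)|^{2+\gamma^2/2}$, and uniqueness in Proposition \ref{prop:formula} then closes the identity in law. The hardest step here is the conformal covariance of $\Q_{x,D}^{x_0,a}$ at the excursion level: the approximants $\mu_\eps^\gamma$ are built from local times of Euclidean circles, which are not conformally natural, so the argument must be carried out on the limiting object itself and hinges on verifying that the excursion intensity, and in particular the Poisson parameter $a$, is preserved under $\phi$.
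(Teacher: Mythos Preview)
Your treatment of (i)--(iii) matches the paper's proof essentially line for line: transfer from $\Mc_\infty^a$ via Corollary~\ref{cor:mu=Mc}, then integrate out the exit point to pass from $\prob_D^{x_0,z}$ to $\prob_{x_0}$.

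For (iv), however, you take a genuinely different route. The paper does \emph{not} argue from Proposition~\ref{prop:formula} and conformal covariance of $\Q_{x,D}^{x_0,a}$; instead it again goes through Corollary~\ref{cor:mu=Mc} and invokes the conformal invariance of $\Mc_\infty^a$ already established in \cite{AidekonHuShi2018} (their Proposition~5.3). Concretely, the paper disintegrates $\prob_{x_0}$ over the exit point $z$, rewrites $\mu^{\gamma,D}$ in terms of $\Mc_\infty^a$ under $\prob_D^{x_0,z}$, applies the known conformal invariance of $\Mc_\infty^a$, and then converts back using the Poisson-kernel transformation rule $H_{D'}(\phi(y),\phi(z)) = |\phi'(z)|^{-1} H_D(y,z)$. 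This is a shorter and safer path because the hard analytic step (conformal behaviour of the excursion decomposition) has already been carried out in \cite{AidekonHuShi2018}.

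Your proposed direct argument is not obviously wrong, but note that the authors themselves flag it as delicate: in the paragraph preceding the corollary they write that ``it is not clear that our approach yields the conformal invariance of the measure without the use of \cite{AidekonHuShi2018}''. The point you correctly isolate---that the Poisson parameter $a$ is preserved under $\phi$ because the Brownian excursion measure is conformally invariant up to time-change---is exactly the step that needs care, and you would also need to check that the uniqueness statement attached to Proposition~\ref{prop:formula} (path-measurability plus the first-moment identity) survives the time-reparametrisation induced by $\phi$. The paper sidesteps all of this by borrowing the finished result from \cite{AidekonHuShi2018}.
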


Let us mention that we present the previous properties \textit{(i)-(iii)} as a consequence of Corollary \ref{cor:mu=Mc} to avoid to repeat the arguments, but we could have obtained them without the help of \cite{AidekonHuShi2018}: as in \cite{bass1994}, \textit{(i)} and \textit{(ii)} follow from the Poisson point process interpretation of the measure $\mu^\gamma$ (Proposition \ref{prop:formula}) whereas \textit{(iii)} follows from our second moment computations (Proposition \ref{prop:bdd in L2}). On the other hand, it is not clear that our approach yields the conformal invariance of the measure without the use of \cite{AidekonHuShi2018}.

Finally, while there are strong similarities between $\mu^\gamma$ and the GMC measure associated to a GFF (indeed, our construction is motivated by this analogy), there are also essential differences. In fact, from the point of view of GMC theory, the measure $\mu^\gamma$ is rather unusual in that it is carried by the random fractal set $\{B_t, t \leq \tau \}$ and does not need extra randomness to be constructed, unlike say Liouville Brownian motion or other instances of GMC on random fractals.

\subsection{Organisation of the paper}

We now explain the main ideas of our proofs of Theorems \ref{th:convergence measures} and \ref{th:identification limits} and how the paper is organised. The overall strategy of the proof is inspired by \cite{berestycki2017}. To prove the convergence of the measures $\nu_\eps^\gamma$ and $\mu_\eps^\gamma$, it is enough to show that for any suitable $A \subset D$ and $T \subset \R$, the real valued random variables $\nu_\eps^\gamma(A \times T)$ and $\mu_\eps^\gamma(A)$ converge in probability which is the content of Proposition \ref{prop:convergence L1} (we actually show that they converge in $L^1$). As in \cite{berestycki2017}, we will consider modified versions $\widetilde{\nu}_\eps^\gamma$ and $\widetilde{\mu}_\eps^\gamma$ of $\nu_\eps^\gamma$ and $\mu_\eps^\gamma$ by introducing good events (see \eqref{eq:def good event} and \eqref{eq:def good event exponential}): at a given $x \in D$, the local times are required to be never too thick around $x$ at every scale. We will show that introducing these good events does not change the behaviour of the first moment (Propositions \ref{prop:first moment estimates} and \ref{prop:first moment estimates exponential}, Section \ref{sec:first moment estimates}) and it makes the sequences $\widetilde{\nu}_\eps^\gamma(A \times T)$ and $\widetilde{\mu}_\eps^\gamma(A)$ bounded in $L^2$ (Propositions \ref{prop:bdd in L2} and \ref{prop:bdd in L2 exponential}, Section \ref{sec:uniform integrability}). Furthermore, we will see that these two sequences are Cauchy sequences in $L^2$ (Proposition \ref{prop:Cauchy}, Section \ref{sec:Cauchy in L2}) implying in particular that they converge in $L^1$. Section \ref{sec:proof of theorems} finishes the proof of Theorems \ref{th:convergence measures} and \ref{th:identification limits} and demonstrates the links of our work with the ones of \cite{bass1994} and \cite{AidekonHuShi2018} (Corollaries \ref{cor:mu=beta}, \ref{cor:mu=Mc} and \ref{cor:properties mu}).

We now explain a few ideas underlying the proof. If the domain $D$ is a disc $D = D(x,\eta)$ centred at $x$, then it is easy to check (by rotational invariance of Brownian motion and second Ray-Knight isomorphism for local times of one-dimensional Brownian motion) that the local times $L_{x,r}(\tau), r >0,$ have a Markovian structure. More precisely, for all $\eta' \in (0,\eta)$ and all $z \in D(0,\eta) \backslash D(0,\eta')$, under $\prob_z$ and conditioned on $L_{x,\eta'}(\tau)$,
\begin{equation}\label{eq:prop local times and Bessel process}
\left( \frac{ L_{x,r}(\tau) }{ r }, r = \eta' e^{-s}, s \geq 0 \right)
\overset{\mathrm{law}}{=}
\left( R_s^2, s \geq 0 \right)
\end{equation}
with $(R_s,s \geq 0)$ being a zero-dimensional Bessel process starting from $\sqrt{ L_{x,\eta'}(\tau) / {\eta'}}$. This is an other clue that exponentiating the square root of the local times should yield an interesting object.

In the case of a general domain $D$, such an exact description is of course not possible, yet for small enough radii, the behaviour of $L_{x,r}(\tau)$ can be seen to be approximatively given by the one in \eqref{eq:prop local times and Bessel process}. If we assume \eqref{eq:prop local times and Bessel process} then the construction of $\mu^\gamma$ is similar to the GMC construction for GFF, with the Brownian motions describing circle averages replaced by Bessel processes of suitable dimension. It seems intuitive that the presence of the drift term in a Bessel process should not affect significantly the picture in \cite{berestycki2017}.

To implement our strategy and use \eqref{eq:prop local times and Bessel process}, we need an argument. In the first moment computations (Propositions \ref{prop:first moment estimates} and \ref{prop:first moment estimates exponential}), we will need a rough upper bound on the local times; an obvious strategy consists in stopping the Brownian motion when it exits a large disc containing the domain. For the second moment (Proposition \ref{prop:Cauchy}), we will need a much more precise estimate.
Let us assume for instance that $D(x,2) \subset D$. We can decompose the local times $(L_{x,r}(\tau), r < 1)$ according to the different macroscopic excursions from $\partial D(x,1)$ to $\partial D(x,2)$ before exiting the domain $D$. To keep track of the overall number of excursions, we will condition on their initial and final points. Because of this conditioning, the local times of a specific excursion are no longer related to a zero-dimensional Bessel process. But if we now condition further on the fact that the excursion went deep inside $D(x,1)$, it will have forgotten its initial point and those local times will be again related to a zero-dimensional Bessel process: this is the content of Lemma \ref{lem:independence local times and exit point} and Appendix \ref{sec:proof lemma independence} is dedicated to its proof. Let us mention that the spirit of Lemma \ref{lem:independence local times and exit point} can be tracked back to Lemma 7.4 of \cite{dembo2001}.

As we have just explained, we will use \eqref{eq:prop local times and Bessel process} to transfer some computations from the local times to the zero-dimensional Bessel process. Throughout the paper, we will thus collect lemmas about this process (Lemmas \ref{lem:Bessel first moment}, \ref{lem:Bessel tail asymptotic} and \ref{lem:Bessel Cauchy}) that will be proven in Appendix \ref{sec:Appendix}.
Of course, we will not be able to transfer all the computations to the zero-dimensional Bessel process, for instance when we consider two circles which are not concentric. But we will be able to treat the local times as if they were the local times of a continuous time random walk:
for a continuous time random walk starting at a given vertex $x$ and killed when it hits for the first time a given set $A$, the time spent by the walk in $x$ is exactly an exponential variable which is independent of the hitting point of $A$. We will show that it is also approximatively true for the local times of Brownian motion. This is the content of Section \ref{sec:preliminaries}.

\medskip
We end this introduction with some notations which will be used throughout the paper.

\textbf{Notations:} If $A,B \subset \R^2$, $x, y \in \R^2$, $\eps > 0$, and $i,j \in \Z$, we will denote by:
\begin{enumerate}
\item[--]
$\tau_A := \inf \{ t \geq 0: B_t \in A \}$ the first hitting time of $A$. In particular, $\tau = \tau_{\R^2 \backslash D}$;
\item[--]
$D(x,\eps)$ (resp. $\bar{D}(x,\eps)$, $\partial D(x,\eps)$) the open disc (resp. closed disc, circle) with centre $x$ and radius $\eps$;
\item[--]
$\mathrm{d}(A,B)$ the Euclidean distance between $A$ and $B$. If $A = \{x\}$, we will simply write $\mathrm{d}(x,B)$ instead of $\mathrm{d}(\{x\},B)$;
\item[--]
$R(x,D)$ the conformal radius of $D$ seen from $x$;
\item[--]
$G_D(x,y)$ the Green function in $x,y$:
\begin{equation}
\label{eq:def Green function}
G_D(x,y) := \pi \int_0^\infty p_s(x,y) ds,
\end{equation}
where $p_s(x,y)$ is the transition probability of Brownian motion killed at $\tau$. We recall its behaviour close to the diagonal (see Equation (1.2) of \cite{berestycki} for instance):
\begin{equation}
\label{eq:Green function asymptotic}
G_D(x,y) = - \log \abs{x-y} + \log R(x,D) + u(x,y)
\end{equation}
where $u(x,y) \to 0$ as $y \to x$;
\item[--]
$\brob_r$ the law under which $(R_s,s \geq 0)$ is a zero-dimensional Bessel process starting from $r>0$;
\item[--]
$[ \vert i, j \vert ]$ the set of integers $\{i, \dots, j\}$.
\end{enumerate}

Finally, we will write $C,C', \tilde{C}$, etc, positive constants which may vary from one line to another. We will also write $o(1)$ (resp. $O(1)$) real-valued sequences which go to zero as $\eps \to 0$ (resp. which are bounded). If we want to emphasise that such a sequence may depend on a parameter $\alpha$, we will write $o_\alpha(1)$ (resp. $O_\alpha(1)$).

\section{Preliminaries}\label{sec:preliminaries}

We start off with some preliminary results that will be used throughout the paper. 

\subsection{Green's function}

\begin{lemma}\label{lem:Green function estimate}
For all $x \in D$, $r > \eps >0$ so that $D(x,\eps) \subset D$ and $y \in \partial D(x, \eps)$, we have:
\begin{align}
\EXPECT{y}{L_{x,\eps}(\tau_{\partial D(x,r)})} & = 2 \eps \log \frac{r}{\eps}, \label{eq:Green estimate circle} \\
\EXPECT{y}{L_{x,\eps}(\tau)} & = 2 \eps \left( \log \frac{1}{\eps} + \log R(x,D) + o(1) \right). \label{eq:Green estimate domain}
\end{align}
\end{lemma}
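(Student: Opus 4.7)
}

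The plan is to convert both identities into statements about the Green's function averaged over the circle $\partial D(x,\eps)$, and then to compute (or estimate) these averages using radial harmonicity.

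\textbf{Step 1: Occupation-time reduction.} Starting from the definition of $L_{x,\eps}$ and Fubini, for any open set $U \subset \R^2$ with $D(x,\eps) \subset U$ and any $y \in U$,
\[
\EXPECT{y}{L_{x,\eps}(\tau_{\partial U})}
= \lim_{r \to 0} \frac{1}{2r} \int_{\eps-r \leq |z-x| \leq \eps+r} \frac{G_U(y,z)}{\pi}\,dz
= \frac{\eps}{\pi} \int_0^{2\pi} G_U(y, x+\eps e^{i\theta})\,d\theta,
\]
using that $\EXPECT{y}{\int_0^{\tau_{\partial U}} f(B_s)\,ds} = \int f(z) G_U(y,z)/\pi\,dz$ (which follows from the paper's normalisation $G_U(x,y) = \pi \int_0^\infty p_s^U(x,y)\,ds$), and passing to polar coordinates with continuity of $G_U$ on the annulus. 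So both identities reduce to evaluating the mean of $G_U(y,\cdot)$ over $\partial D(x,\eps)$ when $y \in \partial D(x,\eps)$.

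\textbf{Step 2: Proof of \eqref{eq:Green estimate circle} via radial harmonicity.} Take $U = D(x,r)$ and define $H(y) := \int_0^{2\pi} G_U(y, x+\eps e^{i\theta})\,d\theta$. As a superposition of Green's functions, $H$ is harmonic on $D(x,r) \setminus \partial D(x,\eps)$ and vanishes on $\partial D(x,r)$; by rotational symmetry around $x$ it is a function only of $\rho := |y-x|$. Solving the radial Laplace equation $h''(\rho)+h'(\rho)/\rho=0$ on the two pieces with the boundary condition $h(r)=0$, boundedness as $\rho \to 0$ (since $G_U(x,z)=\log(r/|z-x|)$ for $U=D(x,r)$ is bounded on $\partial D(x,\eps)$), and continuity at $\rho=\eps$, gives
\[
h(\rho) = \begin{cases} 2\pi\log(r/\eps), & 0 \leq \rho \leq \eps,\\ 2\pi\log(r/\rho), & \eps \leq \rho \leq r.\end{cases}
\]
Evaluating at $\rho=\eps$ and substituting into Step 1 yields $2\eps\log(r/\eps)$.

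\textbf{Step 3: Proof of \eqref{eq:Green estimate domain}.} Here $U = D$ and no closed form is available. Instead I expand using \eqref{eq:Green function asymptotic}: writing $G_D(y,z) = -\log|y-z| + h_y(z)$ where $h_y$ is the harmonic extension of $w\mapsto \log|y-w|$, continuity of the joint map $(y,z)\mapsto h_y(z)$ at $(x,x)$ together with $h_x(x) = \log R(x,D)$ gives
\[
G_D(y,z) = -\log|y-z| + \log R(x,D) + \epsilon(y,z),
\]
with $\epsilon(y,z) \to 0$ uniformly for $y,z \in \overline{D(x,\eps)}$ as $\eps \to 0$. Parametrising $y = x + \eps e^{i\phi}$ and computing the resulting circle integral via the classical identity
\[
\int_0^{2\pi} \log\bigl| 2\sin\tfrac{\theta-\phi}{2}\bigr|\,d\theta = 0,
\]
one obtains $\int_0^{2\pi} \log|y - (x+\eps e^{i\theta})|\,d\theta = 2\pi\log\eps$, so that $H(y) = 2\pi\bigl(\log(1/\eps) + \log R(x,D) + o(1)\bigr)$. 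Plugging into Step 1 gives \eqref{eq:Green estimate domain}.

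\textbf{Main obstacle.} The only nontrivial point is the uniform continuity of the regular part $h_y(z)$ near the diagonal at $x$ in Step 3; this needs a brief harmonic-function argument (the $\log|y-\cdot|$ boundary data depend continuously on $y$ as $y\to x$, so the harmonic extension does too, uniformly on compact subsets of $D$). Everything else is a direct calculation, and the factor $\pi$ in the paper's normalisation of $G_D$ is precisely what converts $2\pi\log(\cdot)$ into the stated $2\eps\log(\cdot)$.
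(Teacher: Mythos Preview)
Your proof is correct and follows essentially the same route as the paper: both reduce the expected local time to the circle average $\frac{1}{\pi}\int_{\partial D(x,\eps)} G_U(y,z)\,dz$, compute this exactly when $U$ is a disc, and use the asymptotic \eqref{eq:Green function asymptotic} for the general domain. The only real difference is in Step~2: the paper plugs in the explicit Green function of the disc, $G_{D(x,r)}(y,z)=\log\frac{|1-(\bar y-\bar x)(z-x)/r^2|}{|y-z|/r}$, and observes that the two resulting integrals over $\partial D(x,\eps)$ vanish, whereas you bypass the explicit formula by exploiting rotational symmetry of the circle average and solving the radial Laplace equation with the value at the centre fixing the constant; both arguments are short and standard, and Step~3 is identical in substance to the paper's.
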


\begin{proof}
We start by proving \eqref{eq:Green estimate circle}. By denoting $p_s(y,z)$ the transition probability of Brownian motion killed at $\tau_{\partial D(x,r)}$, we have:
\[
\EXPECT{y}{L_{x,\eps}(\tau_{\partial D(x,r)})}
=
\int_{\partial D(x, \eps)} dz \int_0^\infty ds ~p_s(y,z)
= \frac{1}{\pi} \int_{\partial D(x, \eps)} dz~ G_{D(x,r)}(y,z).
\]
But the Green function of the disc $D(x,r)$ is equal to (see \cite{lawler2005conformally}, Section 2.4):
\[
G_{D(x,r)}(y,z) = \log \frac{\abs{1-(\bar{y} - \bar{x})(z-x)/r^2}}{\abs{y-z}/r}.
\]
Hence
\[
\EXPECT{y}{L_{x,\eps}(\tau_{\partial D(x,r)})}
= 2 \eps \log \frac{r}{\eps} + \frac{1}{\pi} \int_{\partial D(x,\eps)} \log \frac{\eps}{\abs{y-z}} dz + \frac{1}{\pi} \int_{\partial D(x,\eps)} \log \abs{1-\frac{(\bar{y} - \bar{x})(z-x)}{r^2}} dz.
\]
Because the last two integrals vanish, this gives \eqref{eq:Green estimate circle}.
The proof of \eqref{eq:Green estimate domain} is very similar. The only difference is that we consider the Green function of the general domain $D$. Using the asymptotic \eqref{eq:Green function asymptotic}, we conclude in the same way.
\end{proof}

\subsection{Hitting probabilities}\label{subsec:hitting probabilities}

We now turn to the study of hitting probabilities. The following lemma gives estimates on the probability to hit a small circle before exiting the domain $D$, whereas the next one gives estimates on the probability to hit a small circle before hitting another circle and before exiting the domain $D$.

\begin{lemma}\label{lem:hitting probability}
Let $\eta >0$. For all $\eps >0$ small enough, for all $x \in D$ such that $\mathrm{d}(x, \partial D) > \eta$ and for all $y \in D \backslash D(x, \eps)$, we have:
\begin{equation}\label{eq:hitting probability accurate}
\PROB{y}{ \tau_{ \partial D(x,\eps) } < \tau }
=
\left( 1 + O_\eta \left( \frac{\eps}{\log \eps} \right) \right) G_D(x,y) \left/ \log \left( \frac{R(x,D)}{\eps} \right) \right. .
\end{equation}
\end{lemma}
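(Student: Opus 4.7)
The plan is to compare $f(y) := \PROB{y}{\tau_{\partial D(x,\eps)} < \tau}$ with the candidate $g(y) := G_D(x,y)/\log(R(x,D)/\eps)$ via the Dirichlet problem on $D\setminus\bar D(x,\eps)$. Both functions are harmonic on this annulus, and both vanish on $\partial D$. So the difference $f-g$ is harmonic there with zero boundary data on $\partial D$, and the entire estimate reduces to controlling $f-g$ on $\partial D(x,\eps)$.

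On $\partial D(x,\eps)$, the asymptotic \eqref{eq:Green function asymptotic} gives
\[
g(y) = \frac{-\log\eps + \log R(x,D) + u(x,y)}{\log(R(x,D)/\eps)} = 1 + \frac{u(x,y)}{\log(R(x,D)/\eps)},
\]
so the mismatch from the value $f\equiv 1$ is $-u(x,y)/\log(R(x,D)/\eps)$. I would then observe that $u(x,\cdot) = G_D(x,\cdot) + \log\abs{x-\cdot} - \log R(x,D)$ is harmonic on all of $D$ (the logarithmic singularity of $G_D$ at $x$ is removable) with $u(x,x)=0$. Its boundary data on $\partial D$ are $\log\abs{x-\cdot} - \log R(x,D)$, which are uniformly bounded for $x$ with $\mathrm{d}(x,\partial D)>\eta$. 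Interior gradient estimates for harmonic functions on $\bar D(x,\eta/2)$ then give $|u(x,y)| = |u(x,y)-u(x,x)| \leq C_\eta\, \eps$ for all $y\in\partial D(x,\eps)$ with $\eps<\eta/2$, uniformly in such $x$.

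With that bound in hand, I would write the probabilistic representation. Letting $\tau' := \tau \wedge \tau_{\partial D(x,\eps)}$, harmonicity of $f-g$ and its boundary values give, for $y\in D\setminus\bar D(x,\eps)$,
\[
f(y) - g(y) = -\EXPECT{y}{\frac{u(x,B_{\tau'})}{\log(R(x,D)/\eps)}\, \indic{B_{\tau'}\in\partial D(x,\eps)}},
\]
whence
\[
|f(y) - g(y)| \leq \frac{C_\eta\,\eps}{\log(R(x,D)/\eps)}\, \PROB{y}{\tau_{\partial D(x,\eps)}<\tau} = \frac{C_\eta\,\eps}{\log(R(x,D)/\eps)}\, f(y).
\]
Combining with $\log(R(x,D)/\eps) = \abs{\log\eps}(1+O_\eta(1/\abs{\log\eps}))$ (since $\eta\leq R(x,D)\leq 4\,\mathrm{d}(x,\partial D)$ is bounded above and below), this rearranges to $f(y) = g(y)\bigl(1 + O_\eta(\eps/\abs{\log\eps})\bigr)$, which is exactly \eqref{eq:hitting probability accurate}.

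The only nontrivial step is the uniform gradient bound $|u(x,y)| \leq C_\eta\eps$ — this is not quite automatic since we need uniformity in $x$ over the set $\{\mathrm{d}(x,\partial D)>\eta\}$. I would handle this by first bounding $\sup_{x,y}|u(x,y)|$ uniformly for such $x$ and $y\in D$ using the explicit boundary values, then invoking the standard mean-value/derivative estimate for harmonic functions on the ball $B(x,\eta/2)\subset D$ to turn a sup bound into an $O(\eps)$ Lipschitz bound near $x$. Everything else is a clean application of the maximum principle in its stochastic guise.
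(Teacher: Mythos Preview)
Your argument is correct and takes a genuinely different route from the paper. The paper proceeds by conformal invariance: it maps $D$ to the unit disc $\D$ sending $x$ to $0$ via $f_x$, uses the explicit radial formula $\PROB{y}{\tau_{\partial D(0,\eps)}<\tau_{\partial\D}}=\log|y|/\log\eps$, and then tracks how $f_x(\partial D(x,\eps))$ sits inside a thin annulus $D(0,|f_x'(x)|\eps+c\eps^2)\setminus D(0,|f_x'(x)|\eps-c\eps^2)$; the identifications $R(x,D)=1/|f_x'(x)|$ and $G_D(x,y)=-\log|f_x(y)|$ then give the claim. Your approach instead compares directly with $G_D$ via the maximum principle on $D\setminus\bar D(x,\eps)$, reducing everything to the Lipschitz bound $|u(x,y)|\leq C_\eta\eps$ on $\partial D(x,\eps)$, which you obtain from interior gradient estimates for the bounded harmonic function $u(x,\cdot)$. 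Your route is arguably cleaner, since the final answer is phrased in terms of $G_D$ and $R(x,D)$ anyway, and you never need to quantify the distortion of the small circle under the Riemann map; the paper's route is more hands-on and avoids invoking the gradient estimate, trading it for an explicit $O(\eps^2)$ control on $f_x(\partial D(x,\eps))$. Both rely on essentially the same underlying fact (smoothness of the regular part of $G_D$ near the pole, uniformly on $\{\mathrm{d}(x,\partial D)>\eta\}$), just accessed differently.
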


\begin{proof}
A similar but weaker statement can be found in \cite{bass1994} (Lemma 2.1) and our proof is really close to theirs.
We will take $\eps$ smaller than $\eta/2$ to ensure that the circle $\partial D(x,\eps)$ stays far away from $\partial D$.
If the domain $D$ were the unit disc $\D$ and $x$ the origin, then the probability we are interested in is the probability to hit a small circle before hitting the unit circle. The two circles being concentric, we can use the fact that $(\log \abs{B_t},t \geq 0)$ is a martingale to find that this probability is equal to:
\begin{equation}\label{eq:proof hitting probabilities}
\PROB{y}{\tau_{\partial D(0,\eps)} < \tau_{\partial \D}} = \log \abs{y} / \log \eps.
\end{equation}
In general, we come back to the previous situation by mapping $D$ onto the unit disc $\D$ and $x$ to the origin with a conformal map $f_x$. By conformal invariance of Brownian motion,
\[
\PROB{y}{\tau_{\partial D(x,\eps)} < \tau_D}
=
\PROB{f_x(y)}{\tau_{f_x \left( \partial D(x,\eps) \right)} < \tau_{\D}}.
\]
As $\partial D(x,\eps)$ is far away from the boundary of $D$, the contour $f_x \left( \partial D(x,\eps) \right)$ is included into a narrow annulus
\[
D \left(0, \abs{f_x'(x)} \eps + c \eps^2 \right) \backslash D \left(0, \abs{f_x'(x)} \eps - c \eps^2 \right)
\]
for some $c>0$ depending on $\eta$. In particular, using \eqref{eq:proof hitting probabilities},
\begin{align*}
\PROB{y}{\tau_{\partial D(x,\eps)} < \tau_D}
& \leq
\PROB{f_x(y)}{\tau_{\partial D \left(0, \abs{f_x'(x)} \eps + c \eps^2 \right) } < \tau_{\D}} \\
& =
\frac{\log \abs{f_x(y)}}{\log \left( \abs{f_x'(x)} \eps + c \eps^2 \right) }
=
\frac{\log \abs{f_x(y)}}{\log \left( \abs{f_x'(x)} \eps \right) } \left( 1 + O_\eta \left( \frac{\eps}{\log \eps} \right) \right).
\end{align*}
The lower bound is obtained is a similar manner which yields the stated claim \eqref{eq:hitting probability accurate} noticing that $R(x,D) = 1/ \abs{f'_x(x)}$ and that $G_D(x,y) = - \log \abs{ f_x(y) }$ (see \cite{lawler2005conformally}, Section 2.4).
\end{proof}

\begin{remark}
If $x,y \in D$ are at least at a distance $\eta$ from the boundary of $D$, the quantities
\[
\frac{ G_D(x,y) }{ - \log \abs{x-y} }, R(x,D) \mathrm{~and~} R(y,D)
\]
are bounded away from 0 and from infinity uniformly in $x,y$ (depending on $\eta$). We thus obtain the simpler estimate:
\begin{equation}\label{eq:hitting probabilities rough}
\PROB{y}{ \tau_{ \partial D(x,\eps) } < \tau }, \PROB{x}{ \tau_{ \partial D(y,\eps) } < \tau }
=
\left( 1 + O_\eta \left( \frac{1}{\log \eps} \right) \right) \frac{\log \abs{x-y}}{\log \eps}.
\end{equation}
Depending on the level of accuracy we need, we will use either \eqref{eq:hitting probability accurate} or its rougher version \eqref{eq:hitting probabilities rough}.
\end{remark}

For $x,y \in D$ and $\eps>0$ define
\begin{align*}
p_{xy}^- := \min_{z \in \partial D(x,\eps)} \PROB{z}{\tau_{\partial D(y,\eps)} < \tau}
&\mathrm{~and~}
p_{xy}^+ := \max_{z \in \partial D(x,\eps)} \PROB{z}{\tau_{\partial D(y,\eps)} < \tau},\\
p_{yx}^- := \min_{z \in \partial D(y,\eps)} \PROB{z}{\tau_{\partial D(x,\eps)} < \tau}
&\mathrm{~and~}
p_{yx}^+ := \max_{z \in \partial D(y,\eps)} \PROB{z}{\tau_{\partial D(x,\eps)} < \tau}.
\end{align*}

\begin{lemma}\label{lem:hitting probability 2 circles}
For all $x,y \in D$, $\eps>0$ so that $D(x,\eps)$ and $D(y,\eps)$ are disjoint and included in $D$, for all $z \in D \backslash \left( D(x,\eps) \cup D(y,\eps) \right)$,
\begin{align}
\label{eq:hitting probability 2 circles}
\frac{\PROB{z}{\tau_{\partial D(x,\eps)} < \tau} - p_{yx}^+ \PROB{z}{\tau_{\partial D(y, \eps)} < \tau} }{1 - p_{yx}^+ p_{xy}^-}
& \leq
\PROB{z}{\tau_{\partial D(x,\eps)} < \tau \wedge \tau_{\partial D(y,\eps)} } \\
& ~~~~~~~~~~ \leq
\frac{\PROB{z}{\tau_{\partial D(x,\eps)} < \tau} - p_{yx}^- \PROB{z}{\tau_{\partial D(y, \eps)} < \tau} }{1 - p_{yx}^- p_{xy}^+} \nonumber.
\end{align}
\end{lemma}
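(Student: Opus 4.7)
The natural plan is to decompose the event $\{\tau_{\partial D(x,\eps)} < \tau\}$ according to whether the Brownian motion hits $\partial D(y,\eps)$ before or after hitting $\partial D(x,\eps)$, and likewise for $\{\tau_{\partial D(y,\eps)} < \tau\}$. Writing
\[
A := \PROB{z}{\tau_{\partial D(x,\eps)} < \tau \wedge \tau_{\partial D(y,\eps)}},
\qquad
B := \PROB{z}{\tau_{\partial D(y,\eps)} < \tau \wedge \tau_{\partial D(x,\eps)}},
\]
the disjoint decomposition gives
\[
\PROB{z}{\tau_{\partial D(x,\eps)} < \tau} = A + \PROB{z}{\tau_{\partial D(y,\eps)} < \tau_{\partial D(x,\eps)} < \tau},
\]
and symmetrically for the probability with $x$ and $y$ swapped.

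The next step is to bound the second term using the strong Markov property at $\tau_{\partial D(y,\eps)}$: on that event the motion sits at some point of $\partial D(y,\eps)$, and then must hit $\partial D(x,\eps)$ before leaving $D$, so by the very definition of $p_{yx}^-$ and $p_{yx}^+$ one has
\[
p_{yx}^- B \;\leq\; \PROB{z}{\tau_{\partial D(y,\eps)} < \tau_{\partial D(x,\eps)} < \tau} \;\leq\; p_{yx}^+ B,
\]
and analogously $p_{xy}^- A \leq \PROB{z}{\tau_{\partial D(x,\eps)} < \tau_{\partial D(y,\eps)} < \tau} \leq p_{xy}^+ A$. Setting $P_x := \PROB{z}{\tau_{\partial D(x,\eps)} < \tau}$ and $P_y := \PROB{z}{\tau_{\partial D(y,\eps)} < \tau}$, this rearranges into the coupled linear inequalities
\[
P_x - p_{yx}^+ B \;\leq\; A \;\leq\; P_x - p_{yx}^- B,
\qquad
P_y - p_{xy}^+ A \;\leq\; B \;\leq\; P_y - p_{xy}^- A.
\]

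Finally, I would substitute each one-sided bound on $B$ into the corresponding one-sided bound on $A$ and solve for $A$. For the upper bound, the chain $A \leq P_x - p_{yx}^- B \leq P_x - p_{yx}^-(P_y - p_{xy}^+ A)$ gives the desired $(1 - p_{yx}^- p_{xy}^+)A \leq P_x - p_{yx}^- P_y$; for the lower bound, $A \geq P_x - p_{yx}^+ B \geq P_x - p_{yx}^+(P_y - p_{xy}^- A)$ gives $(1 - p_{yx}^+ p_{xy}^-)A \geq P_x - p_{yx}^+ P_y$, which is exactly the stated inequality. No step is really hard, but care is needed to compose the inequalities in the correct direction and to check that the denominators $1 - p_{yx}^\pm p_{xy}^\mp$ are positive (which is immediate since each hitting probability is strictly less than $1$ under the assumption that $D$ is bounded and the two discs are strictly inside it).
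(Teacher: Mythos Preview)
Your proof is correct and follows essentially the same approach as the paper: decompose $\PROB{z}{\tau_{\partial D(x,\eps)} < \tau}$ according to whether $\partial D(y,\eps)$ is hit first, apply the strong Markov property at the first hitting time of $\partial D(y,\eps)$ to bound the cross term by $p_{yx}^\pm B$, and then combine the two resulting linear inequalities in $A$ and $B$. The paper eliminates $B$ by taking a linear combination of the two inequalities while you substitute one into the other, but the algebra is identical; your extra remark on the positivity of $1 - p_{yx}^\pm p_{xy}^\mp$ is a nice touch that the paper leaves implicit.
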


\begin{proof}
By Markov property and by definition of $p_{yx}^+$, we have:
\begin{align*}
\PROB{z}{\tau_{\partial D(x,\eps)} < \tau}
& = \PROB{z}{\tau_{\partial D(x,\eps)} < \tau \wedge \tau_{\partial D(y,\eps)}}
+ \PROB{z}{\tau_{\partial D(y,\eps)} < \tau_{\partial D(x,\eps)} < \tau} \\
& \leq \PROB{z}{\tau_{\partial D(x,\eps)} < \tau \wedge \tau_{\partial D(y,\eps)}}
+ \PROB{z}{\tau_{\partial D(y,\eps)} < \tau \wedge \tau_{\partial D(x,\eps)}} p_{yx}^+.
\end{align*}
Similarly,
\[
\PROB{z}{\tau_{\partial D(y,\eps)} < \tau} \geq \PROB{z}{\tau_{\partial D(y,\eps)} < \tau \wedge \tau_{\partial D(x,\eps)}}
+ \PROB{z}{\tau_{\partial D(x,\eps)} < \tau \wedge \tau_{\partial D(y,\eps)}} p_{xy}^-.
\]
Combining those two inequalities yields
\[
\PROB{z}{\tau_{\partial D(x,\eps)} < \tau} - p_{yx}^+ \PROB{z}{\tau_{\partial D(y,\eps)} < \tau}
\leq (1-p_{yx}^+p_{xy}^-) \PROB{z}{\tau_{\partial D(x,\eps)} < \tau \wedge \tau_{\partial D(y,\eps)} }
\]
which is the first inequality stated in \eqref{eq:hitting probability 2 circles}. The other inequality is similar.
\end{proof}

\subsection{Approximation of local times by exponential variables}\label{subsec:approximation local times}

In this subsection, we explain how to approximate the local times $L_{x,\eps}(\tau)$ by exponential variables.
For $x \in \R^2, \eps>0, y \in \partial D(x,\eps)$ and any event $E$, define
\[
H_{x,\eps}^y(E) := \frac{1}{2} \lim_{\substack{z \in D(x,\eps)\\z \to y}} \prob_z^*(E) / \mathrm{d}(z,\partial D(x,\eps))
+ \frac{1}{2} \lim_{\substack{z \notin \bar{D}(x,\eps)\\z \to y}} \prob_z^*(E) / \mathrm{d}(z,\partial D(x,\eps))
\]
where $\prob_z^*$ is the probability measure of Brownian motion starting at $z$ and killed when it hits for the first time the circle $\partial D(x,\eps)$.
For $A \subset \R^2, x \in \R^2$, we will denote $\omega^A(x,d \xi)$ the harmonic measure of $A$ from $x$.

\begin{lemma}\label{lem:approx local times exponential}
Let $x \in \R^2, \eps >0$ and $C \subset \R^2$. Assume that $\mathrm{d}( \partial D(x,\eps), C) > 0$ and that there exists $u > 0$ such that for all $y,y' \in \partial D(x,\eps)$ and $E \subset C$,
\[
(1-u) \omega^C(y,E) \leq \omega^C(y',E) \leq (1+u) \omega^C(y,E).
\]
Then for all $y \in \partial D(x,\eps)$ and $t>0$,
\begin{align*}
(1-u) e^{- \max_{z \in \partial D(x,\eps)} H_{x,\eps}^z(\tau_C < \infty) t} \leq
\PROB{y}{L_{x,\eps}(\tau_C) > t \vert B_{\tau_C}}
\leq (1+u) e^{- \min_{z \in \partial D(x,\eps)} H_{x,\eps}^z(\tau_C < \infty) t}.
\end{align*}
\end{lemma}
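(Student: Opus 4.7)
The plan is to use Itô excursion theory for planar Brownian motion away from the circle $\partial D(x,\eps)$, with local time $L_{x,\eps}$ serving as the clock for the Poisson point process of excursions. The key observation is that $H^z_{x,\eps}(\cdot)$ is precisely the excursion intensity from $z \in \partial D(x,\eps)$: both boundary derivatives appearing in its definition compute the rate at which a Brownian motion started just off the circle performs an excursion of a given type, averaged over the two sides. In particular, $H^z_{x,\eps}(\tau_C<\infty)$ is the rate of excursions from $z$ that reach $C$ before returning to $\partial D(x,\eps)$, and $H^z_{x,\eps}(\tau_C<\infty,\, B_{\tau_C}\in E)$ is the rate of those that furthermore hit a subset $E \subset C$.

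Next I would decompose the Brownian trajectory up to $\tau_C$ according to its excursions from $\partial D(x,\eps)$. The event $\{L_{x,\eps}(\tau_C) > t\}$ holds if and only if no $C$-hitting excursion occurs in the local-time window $[0,t]$, after which the first $C$-hitting excursion determines $B_{\tau_C}$. Writing $(z_\ell)_{\ell \geq 0}$ for the position on the circle indexed by local time, the Poisson structure will give
\begin{align*}
\prob_y\bigl(L_{x,\eps}(\tau_C) > t,\, B_{\tau_C} \in E\bigr)
= \expect_y\!\left[\int_t^\infty \exp\!\Bigl(-\!\int_0^s H^{z_\ell}_{x,\eps}(\tau_C{<}\infty)\,d\ell\Bigr)\, H^{z_s}_{x,\eps}(\tau_C{<}\infty,\, B_{\tau_C} \in E)\,ds\right].
\end{align*}
The uniform-harmonic-measure hypothesis translates into $(1\pm u)$-uniformity across $z \in \partial D(x,\eps)$ of both the rate $H^z_{x,\eps}(\tau_C<\infty)$ and of the conditional exit density $H^z_{x,\eps}(\tau_C<\infty,\, B_{\tau_C}\in E)/H^z_{x,\eps}(\tau_C<\infty)$, which is exactly $\omega^C(z,E)/\omega^C(z,C)$. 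Pulling these uniform bounds out of the inner expectation, the integral over $s$ collapses to $e^{-H^z t}$ with $H^z$ replaced by its max (for the lower bound) or min (for the upper bound) on the circle.

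Finally, dividing by $\prob_y(B_{\tau_C} \in E)$, which factors through the same excursion decomposition so that the $(1\pm u)$ factor on $H^{z_s}_{x,\eps}(B_{\tau_C}\in E)$ largely cancels up to the stated error, and taking a regular-conditioning limit as $E$ shrinks around a point $\xi \in C$, will yield the two inequalities of the lemma. The main obstacle will be to make the excursion-theoretic heuristic above fully rigorous: the starting point $z_\ell$ fluctuates along the circle in a complicated way, and the uniform harmonic measure hypothesis is exactly the input that controls how this fluctuation affects both the exponential rate and the conditional distribution of $B_{\tau_C}$, enabling the $(1\pm u)$ errors to be extracted cleanly from the Poisson integral.
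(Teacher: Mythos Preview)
Your excursion-theoretic route is different from the paper's, which proceeds by an elementary discretisation: for small $r>0$ the paper counts the number $N_r$ of down-crossings from $\partial D(x,\eps+r)$ to $\partial D(x,\eps-r)$ before $\tau_C$, uses $L_{x,\eps}(\tau_C)=4\lim_{r\to 0} rN_r$, and by repeated application of the Markov property bounds $\EXPECT{y}{\indic{N_r>n} f(B_{\tau_C})}$ by a geometric product of one-step probabilities times a terminal factor $\EXPECT{B_{\sigma_n}}{f(B_{\tau_C})}$, the latter controlled directly by the hypothesis on $\omega^C$. Letting $r\to 0$ turns the geometric product into the claimed exponential with the max/min rate. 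No continuous excursion machinery is invoked.

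Your plan has a genuine gap. You assert that the hypothesis on $\omega^C$ yields $(1\pm u)$-uniformity in $z$ of the rate $H^z_{x,\eps}(\tau_C<\infty)$, and that the conditional excursion exit law $H^z_{x,\eps}(\tau_C<\infty,B_{\tau_C}\in E)/H^z_{x,\eps}(\tau_C<\infty)$ equals $\omega^C(z,E)/\omega^C(z,C)$. Neither is justified: the rate $H^z$ is a boundary derivative whose variation in $z$ is not governed by the harmonic-measure hypothesis (nor does the lemma need it to be---it keeps $\max_z$ and $\min_z$ in the exponent), while the exit law of a \emph{single} $C$-hitting excursion from $z$, killed on return to the circle, is not the same object as $\omega^C(z,\cdot)$, which averages over the random starting point of the eventually successful excursion. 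Without these identities your ``$(1\pm u)$ largely cancels'' step does not go through. A clean fix is to bypass the excursion exit law entirely: apply the strong Markov property at the inverse local time $T_t=\inf\{s:L_{x,\eps}(s)>t\}$, so that on $\{L_{x,\eps}(\tau_C)>t\}$ one has $B_{T_t}\in\partial D(x,\eps)$ and
\[
\PROB{y}{L_{x,\eps}(\tau_C)>t,\,B_{\tau_C}\in E}=\EXPECT{y}{\indic{L_{x,\eps}(\tau_C)>t}\,\omega^C(B_{T_t},E)}.
\]
Now the hypothesis applies directly to $\omega^C(B_{T_t},E)$ and produces the $(1\pm u)$ factor; what remains is the \emph{unconditional} tail $\PROB{y}{L_{x,\eps}(\tau_C)>t}$, for which either your Poisson heuristic or the paper's crossing count gives the exponential bounds with $\max_z H^z$ and $\min_z H^z$.
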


\begin{remark}
The previous lemma states that we can approximate $L_{x,\eps}(\tau_C)$ by an exponential variable which is independent of $B_{\tau_C}$. This is similar to the case of random walks on discrete graphs. If we did not condition on $B_{\tau_C}$, it would not have been necessary to add the multiplicative errors $1-u$ and $1+u$. This statement without conditioning is also a consequence of Lemma 2.2 (i) of \cite{bass1994}.
\end{remark}

\begin{proof}
Since the proof is standard, we will be brief.
Take $r>0$ small enough so that the annulus $\bar{D}(x,\eps+r) \backslash D(x, \eps-r)$ does not intersect $C$. Consider the different excursions from $\partial D(x,\eps+r)$ to $\partial D(x,\eps-r)$: denote $\sigma_0^{(2)} :=0$ and for all $i \geq 1$,
\[
\sigma_i^{(1)} := \inf \left\{ t > \sigma_{i-1}^{(2)}: B_t \in \partial D(x,\eps+r) \right\}
\mathrm{~and~}
\sigma_i^{(2)} := \inf \left\{ t > \sigma_i^{(1)}: B_t \in \partial D(x,\eps-r) \right\}.
\]
The number of excursions $N_r := \max \{ i \geq 0: \sigma_i^{(2)} < \tau_C \}$ before $\tau_C$ is related to $L_{x,\eps}(\tau_C)$ by:
\[
L_{x,\eps}(\tau_C) = 4 \lim_{r \to 0} r N_r \quad \quad \prob_{y}-\mathrm{a.s.}
\]
Hence, for any $f : \R^2 \to [0,\infty)$ continuous bounded function, we have by dominated convergence theorem
\[
\EXPECT{y}{\indic{L_{x,\eps}(\tau_C) > t} f \left( B_{\tau_C} \right) }
= \lim_{r \to 0} \EXPECT{y}{\indic{N_r > \floor{t/(4 r)}} f \left( B_{\tau_C} \right) }.
\]
Because
\[
\EXPECT{B_{\sigma^{(2)}_{\floor{t/(4 r)}}}}{f(B_{\tau_C})} \leq (1+u+o_{r \to 0}(1)) \EXPECT{y}{f(B_{\tau_C})} \quad \quad \prob_y-\mathrm{a.s.},
\]
and by a repeated application of Markov property, $ \EXPECT{y}{\indic{N_r > \floor{t/(4 r)}} f \left( B_{\tau_C} \right) } $ is at most
\begin{equation}
\label{eq:proof_lemma2.4}
(1+u+o_{r \to 0}(1)) \EXPECT{y}{f(B_{\tau_C})}
\max_{z \in \partial D(x,\eps+r)} \PROB{z}{ \sigma_1^{(2)} < \sigma_2^{(1)} < \tau_C}^{\floor{\frac{t}{4 r}}}.
\end{equation}
If $z \in \partial D(x,\eps +r)$ is at distance $r$ from $z_\eps \in \partial D(x,\eps)$,
\begin{align*}
1 - \PROB{z}{ \sigma_1^{(2)} < \sigma_2^{(1)} < \tau_C} & = \PROB{z}{\tau_C < \tau_{\partial D(x,\eps-r)}} + (1+o_{r\to 0}(1)) \PROB{z}{\tau_C < \sigma_2^{(1)} \left\vert \sigma_1^{(2)} < \tau_C\right.} \\
= 2r & (1+o_{r\to 0}(1)) \left( \lim_{\substack{z' \notin \bar{D}(x,\eps)\\z' \to z_\eps}} \frac{ \prob_{z'}^*(\tau_C < \infty) }{ \mathrm{d}(z',\partial D(x,\eps)) } + \lim_{\substack{z' \in D(x,\eps)\\z' \to z_\eps}} \frac{ \prob_{z'}^*(\tau_C < \infty) }{ \mathrm{d}(z',\partial D(x,\eps)) } \right) \\
= 4r & (1+o_{r\to 0}(1)) H_{x,\eps}^{z_\eps}(\tau_C < \infty).
\end{align*}
Hence
\[
\max_{z \in \partial D(x,\eps+r)} \PROB{z}{ \sigma_1^{(2)} < \sigma_2^{(1)} < \tau_C} = 1 - 4 r \min_{z \in \partial D(x,\eps)} H^z_{x,\eps}(\tau_C < \infty) + o_{r \to 0}(r).
\]
Coming back to \eqref{eq:proof_lemma2.4}, we have obtained
\[
\EXPECT{y}{\indic{L_{x,\eps}(\tau_C) > t} f \left( B_{\tau_C} \right) }
\leq (1+u) \EXPECT{y}{f(B_{\tau_C})} e^{- \min_{z \in \partial D(x,\eps)} H_{x,\eps}^z(\tau_C < \infty) t}
\]
which is the required upper bound. The lower bound is obtained in a similar way.
\end{proof}

The next lemma explains how to compute the quantities appearing in the previous lemma. Again, particular cases of this can be found in \cite{bass1994} (Lemmas 2.3, 2.5).

\begin{lemma}\label{lem:compute excursion measure}
Let $x \in D, \eps> \delta >0$ and $A \subset D$ such that $D(x,\eps) \subset D \backslash A$ and denote $d$ the distance between $\partial D(x,\eps)$ and $A \cup \partial D$. Assume $d>0$. Let $B$ be either $A$ or $A \cup \partial D(x,\delta)$ and denote
\[
u = \begin{cases}
\frac{\eps}{\eps + d} \mathrm{~if~} B = A, \\
\frac{\eps}{\eps + d} + \frac{\delta}{\eps} \mathrm{~if~} B = A \cup \partial D(x,\delta).
\end{cases}
\]
We have for all $y,y' \in \partial D(x,\eps)$, and $E \subset B \cup \partial D,$
\begin{equation}
\label{eq:lem harmonic measure}
\omega^{B \cup \partial D}(y,E) = \left( 1 + O (u) \right) \omega^{B \cup \partial D}(y',E).
\end{equation}
Moreover, denoting $\tau_{\partial D(x,\eps)}^B := \inf \{ t > \tau_B: B_t \in \partial D(x,\eps) \} $ the first hitting time of $\partial D(x,\eps)$ after $\tau_B$, we have for any $z \in \partial D(x,\eps)$,
\begin{align}
\label{eq:lem compute excursion measure}
\frac{1}{ H_{x,\eps}^z (\tau \wedge \tau_B < \infty) }
& = (1 + O (u)) \max_{y \in \partial D(x,\eps)} \EXPECT{y}{L_{x,\eps}(\tau)} \left( 1 - \int_{\partial D(x,\eps)} \frac{dy}{2\pi \eps} \PROB{y}{\tau_{\partial D(x,\eps)}^B < \tau} \right).
\end{align}
\end{lemma}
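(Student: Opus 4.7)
The two statements are proved in succession: the harmonic measure comparison \eqref{eq:lem harmonic measure} is used as an input to derive the excursion-measure formula \eqref{eq:lem compute excursion measure}.

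For \eqref{eq:lem harmonic measure}, the function $w \mapsto \omega^{B \cup \partial D}(w, E)$ is harmonic off $B \cup \partial D$. When $B = A$, the disc $D(x, \eps+d)$ lies entirely in $D \setminus A$, so I would apply the Poisson formula on that disc to express $\omega^{B \cup \partial D}(y, E)$ for $y \in \partial D(x,\eps)$ as an integral against $\omega^{B \cup \partial D}(\cdot, E)$ on $\partial D(x,\eps+d)$. The Poisson kernel from a point of distance $\eps$ from the centre has density in $[d/(2\eps+d), (2\eps+d)/d]$ relative to uniform, so the ratio between its values at two points $y, y' \in \partial D(x,\eps)$ is $1 + O(\eps/(\eps+d))$, which yields the claim. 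When $B = A \cup \partial D(x,\delta)$, the same harmonic function is only defined on the annulus $D(x,\eps+d) \setminus \bar D(x,\delta)$; I would condition on the first exit of this annulus. Paths exiting through the outer boundary $\partial D(x,\eps+d)$ contribute the error $O(\eps/(\eps+d))$ as before, while paths exiting through $\partial D(x,\delta) \subset B$ contribute an additional error $O(\delta/\eps)$, which I would estimate using the annulus analogue of the Poisson representation together with rotational symmetry around $x$.

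For \eqref{eq:lem compute excursion measure}, I combine \eqref{eq:lem harmonic measure} (applied with $C = B \cup \partial D$) and Lemma \ref{lem:approx local times exponential}. Integrating the exponential tail approximation in $t$, and using that the quantities $H_{x,\eps}^z(\tau \wedge \tau_B < \infty)$ are comparable for different $z \in \partial D(x,\eps)$ up to a factor $1 + O(u)$ (by a boundary-Harnack-type argument for the half-normal derivative, in the spirit of \eqref{eq:lem harmonic measure}), one obtains
\[
\EXPECT{y}{L_{x,\eps}(\tau \wedge \tau_B)} = (1 + O(u)) / H_{x,\eps}^z(\tau \wedge \tau_B < \infty)
\]
for any $y, z \in \partial D(x,\eps)$. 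It remains to evaluate the left-hand side. Set $f(w) := \EXPECT{w}{L_{x,\eps}(\tau)}$ and $M := \max_w f(w)$. The strong Markov property applied first at $\tau_B$ and then at $\tau_{\partial D(x,\eps)}^B$ gives
\[
f(y) = \EXPECT{y}{L_{x,\eps}(\tau \wedge \tau_B)} + \EXPECT{y}{\indic{\tau_{\partial D(x,\eps)}^B < \tau} f(B_{\tau_{\partial D(x,\eps)}^B})}.
\]
Using that $f$ is essentially constant on $\partial D(x,\eps)$, equal to $M$ up to errors absorbed into $O(u)$ (from \eqref{eq:Green estimate domain} and a short comparison of Green functions), and using \eqref{eq:lem harmonic measure} to replace $\PROB{y}{\tau_{\partial D(x,\eps)}^B < \tau}$ by the spatial average $p := \int_{\partial D(x,\eps)} \frac{dy}{2\pi\eps} \PROB{y}{\tau_{\partial D(x,\eps)}^B < \tau}$ up to a factor $(1+O(u))$, evaluation at a point $y$ with $f(y) = M$ gives $\EXPECT{y}{L_{x,\eps}(\tau \wedge \tau_B)} = M (1+O(u))(1-p)$. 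Combining with the previous display yields the claim.

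The main obstacle is the annulus case of \eqref{eq:lem harmonic measure}, where the sharp $O(\delta/\eps)$ error for paths absorbed at $\partial D(x,\delta)$ requires more than a single Poisson formula. A related delicate point is the boundary-Harnack comparison of the half-normal derivatives $H_{x,\eps}^z$, necessary to convert the $\max_z$/$\min_z$ bounds of Lemma \ref{lem:approx local times exponential} into a clean $(1+O(u))/H_{x,\eps}^z$ expression.
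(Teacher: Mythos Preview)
Your proposal is correct and follows essentially the same route as the paper: Poisson-kernel comparison on a larger circle for \eqref{eq:lem harmonic measure}, then Lemma~\ref{lem:approx local times exponential} together with a Markov decomposition into excursions between $\partial D(x,\eps)$ and $B$ for \eqref{eq:lem compute excursion measure}, including the separate boundary-Harnack comparison of the $H_{x,\eps}^z$.

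Two minor tactical differences are worth flagging. For the annulus case $B = A \cup \partial D(x,\delta)$, the paper avoids the annulus Poisson kernel entirely: for $E \subset \partial D(x,\delta)$ it writes
\[
\omega^{B \cup \partial D}(y,E) = \omega^{\partial D(x,\delta)}(y,E) - \EXPECT{y}{\indic{\tau_A \wedge \tau < \tau_{\partial D(x,\delta)}}\, \omega^{\partial D(x,\delta)}(B_{\tau_A \wedge \tau},E)},
\]
and then uses the \emph{exterior} Poisson kernel of the disc $D(x,\delta)$, whose density from any point at distance $\geq \eps$ is $\tfrac{1}{2\pi\delta}(1+O(\delta/\eps))$; this gives the $O(\delta/\eps)$ error in one line and sidesteps the obstacle you identify. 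For \eqref{eq:lem compute excursion measure}, the paper expands the full geometric series $\EXPECT{y}{L_{x,\eps}(\tau)} = \sum_n (n+1)\,\PROB{y}{N=n}\,(1+O(u))^{n+1}/H$ rather than your one-step recursion $f = \EXPECT{\cdot}{L_{x,\eps}(\tau\wedge\tau_B)} + p\,f$; the two are of course equivalent, and your version is arguably cleaner.
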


\begin{proof}
In this proof, we will consider $\eta >0$ such that $D(x,\eps + \eta) \cap (A \cup \partial D) = \varnothing$.

Let us start by proving \eqref{eq:lem harmonic measure} for $B=A$. Let $y \in \partial D(x,\eps), E \subset A \cup \partial D$. By Markov property applied to the first hitting time of $\partial D(x,\eps+\eta)$, we have
\[
\omega^{A \cup \partial D}(y,E) = \int_{\partial D(x,\eps+\eta)} \omega^{\partial D(x,\eps+\eta)}(y,d \xi) \PROB{\xi}{B_{\tau_A \wedge \tau} \in E}.
\]
But the measure $\omega^{\partial D(x,\eps+\eta)}(y,d \xi)$ is explicit and its density with respect to the Lebesgue measure on the circle $\partial D(x,\eps+\eta)$ is equal to
\[
\frac{1}{2\pi (\eps+\eta)} \frac{(\eps+\eta)^2 - \abs{y-x}^2}{\abs{y-\xi}^2} = \frac{1}{2\pi (\eps+\eta)} \left(1 + O \left( \frac{\eps}{\eps + \eta} \right) \right).
\]
Hence, up to a multiplicative error $1 + O (\eps/(\eps + \eta))$, $\omega^{A \cup \partial D}(y,E)$ is independent of $y \in \partial D(x,\eps)$ which is the content of \eqref{eq:lem harmonic measure} for $B=A$. We now prove it for $B = A \cup \partial D(x,\delta)$. The reasoning is going to be similar. Let $y \in \partial D(x,\eps), E \subset B \cup \partial D$. We only need to treat the case of $E \subset \partial D(x,\delta)$ or $E \subset A \cup \partial D$. We will deal with the first one, as the latter is similar. By Markov property applied to $\tau_A \wedge \tau$, we have
\begin{align*}
\omega^{B \cup \partial D}(y,E) & = \PROB{y}{B_{\tau_{\partial D(x,\delta)}} \in E} - \PROB{y}{B_{\tau_{\partial D(x,\delta)}} \in E, \tau_{\partial D(x,\delta)} > \tau_A \wedge \tau}
\\
& = \omega^{\partial D(x,\delta)}(y,E) - \EXPECT{y}{ \indic{\tau_{\partial D(x,\delta)} > \tau_A \wedge \tau} \omega^{\partial D(x,\delta)}(B_{\tau_A \wedge \tau},E) }.
\end{align*}
Again the measure $\omega^{\partial D(x,\delta)}(y,d \xi)$ is explicit and its density with respect to the Lebesgue measure on the circle $\partial D(x,\delta)$ is equal to
\[
\frac{1}{2\pi\delta} \frac{\abs{y-x}^2 - \delta^2}{\abs{y-\xi}^2} = \frac{1}{2\pi\delta} \left( 1 + O \left( \frac{\delta}{\eps} \right) \right).
\]
Hence, up to a multiplicative error $1 + O (\delta/\eps)$, $\omega^{\partial D(x,\delta)}(y,d \xi)$ is uniform on $\partial D(x,\delta)$ and does not depend on $y$. As $A \cup \partial D$ is even further from $\partial D(x,\delta)$, the same is true with $\omega^{\partial D(x,\delta)}(z,d \xi)$ for any $z \in A \cup \partial D$. To conclude that $\omega^{B \cup \partial D}(y,E)$ does not depend on $y$, we observe that
\begin{align}
\label{eq:proof compute excursion2}
\PROB{y}{\tau_{\partial D(x,\delta)} > \tau_A \wedge \tau}
& =
\PROB{y}{\tau_{\partial D(x,\delta)} > \tau_{\partial D(x,\eps+\eta)}} \\
& ~~~~~~~~~\times \int_{\partial D(x,\eps+\eta)} \omega^{\partial D(x,\eps+\eta)}(y,d\xi) \PROB{\xi}{\tau_{\partial D(x,\delta)} > \tau_A \wedge \tau}. \nonumber
\end{align}
By rotational invariance of Brownian motion, the first term is independent of $y \in \partial D(x,\eps)$. We have already seen that up to a multiplicative error $1 + O(\eps / (\eps + \eta))$, $\omega^{\partial D(x,\eps+\eta)}(y,d\xi)$ is uniform on the circle and thus does not depend on $y$. In the end, it shows that up to a multiplicative error $1 + O(\delta/\eps) + O(\eps/(\eps+\eta))$, $\omega^{B \cup \partial D}(y,E)$ is independent of $y \in \partial D(x,\eps)$ which was required by the claim \eqref{eq:lem harmonic measure} in the case $B = A \cup \partial D(x,\delta)$.
\medskip

We now prove \eqref{eq:lem compute excursion measure}.
We proceed as follows: we bound from below $1 \big/ \min_{z \in \partial D(x,\eps)} H_{x,\eps}^z (\tau \wedge \tau_B < \infty)$ and we show that
\begin{equation}\label{eq:proof compute excursion1}
\min_{z \in \partial D(x,\eps)} H_{x,\eps}^z(\tau \wedge \tau_B < \infty) \geq \left( 1 + O \left( \frac{\eps}{\eps + d} \right) \right) \max_{z \in \partial D(x,\eps)} H_{x,\eps}^z(\tau \wedge \tau_B < \infty)
\end{equation}
which provides a lower bound on $1 \big/ H_{x,\eps}^z (\tau \wedge \tau_B)$ for any $z \in \partial D(x,\eps)$. The upper bound is obtained in a similar way.

Let us start by proving \eqref{eq:proof compute excursion1}. Recall that $\eta >0$ has been chosen such that $D(x,\eps + \eta) \cap (A \cup \partial D) = \varnothing$. Let $z \in D(x,\eps+\eta/2) \backslash D(x,\delta)$. We want to show that the dependence of $z$ on $\PROB{z}{\tau \wedge \tau_B < \tau_{\partial D(x,\eps)}}$ relies almost exclusively on $\abs{z-x}$.
If $z$ is inside $D(x,\eps)$ it is clear: if $B=A$ this probability is equal to zero and if $B = A \cup \partial D(x, \delta)$, it depends only on $\abs{z-x}$ by rotational invariance of Brownian motion. Whereas if $z$ is outside $\bar{D}(x,\eps)$, a similar argument as in \eqref{eq:proof compute excursion2} shows that up to a multiplicative error $1 + O \left( \abs{z-x}/(\eps + \eta) \right)$ this probability depends only on $\abs{z-x}$. It concludes the proof of \eqref{eq:proof compute excursion1}.

We now bound from below $1 \big/ \min_{z \in \partial D(x,\eps)} H_{x,\eps}^z (\tau \wedge \tau_B < \infty)$.
Take a starting point $y \in \partial D(x,\eps)$.
We decompose $L_{x,\eps}(\tau)$ according to the different excursions between $\partial D(x,\eps)$ and $B$. Denote $\sigma_0^{(1)} := 0$ and for all $i \geq 1,$
\[
\sigma_i^{(2)} := \inf \{ t \geq \sigma_{i-1}^{(1)}: B_t \in B \}
\mathrm{~and~}
\sigma_i^{(1)} := \inf \{ t \geq \sigma_i^{(2)}: B_t \in \partial D(x,\eps) \}.
\]
We also denote $N := \sup \{ i \geq 0: \sigma_i^{(1)} < \tau \} $ the number of excursions from $B$ to $\partial D(x,\eps)$ and $L_{x,\eps}^i$ the local time of $\partial D(x,\eps)$ accumulated during the interval of time $[ \sigma_i^{(1)}, \sigma_i^{(2)} ]$. Using the convention $\sigma_{N}^{(2)} := \tau$, we have
\[
L_{x,\eps}(\tau) = \sum_{i=0}^N L_{x,\eps}^i.
\]
By Lemma \ref{lem:approx local times exponential} applied to $C = \partial D \cup B$ and thanks to \eqref{eq:lem harmonic measure},
\[
\EXPECT{y}{L_{x,\eps}(\tau)}
\leq
\sum_{n=0}^\infty \PROB{y}{N=n} (1+n) (1+O(u))^{1+n} \left/ \min_{z \in \partial D(x,\eps)} H_{x,\eps}^z (\tau \wedge \tau_B) \right..
\]
As
\[
\PROB{y}{N \geq n} = \left( (1+O(u)) \int_{\partial D(x,\eps)} \frac{dz}{2\pi \eps} \PROB{z}{N \geq 1} \right)^n,
\]
it leads to
\[
\EXPECT{y}{L_{x,\eps}(\tau)} \leq (1+O(u)) \left( 1 - \int_{\partial D(x,\eps)} \frac{dz}{2\pi \eps} \PROB{z}{N \geq 1} \right)^{-1} \left( \min_{z \in \partial D(x,\eps)} H_{x,\eps}^z (\tau \wedge \tau_B) \right)^{-1}
\]
which is the required lower bound on $1 / \min_{z \in \partial D(x,\eps)} H_{x,\eps}^z (\tau \wedge \tau_B)$.
\end{proof}

In the next sections we will consider $\gamma \in (0,2)$, $A \in \Bc(D)$ and $T$ of the form $T=(b,\infty)$ with $b \in \R$. For $\widetilde{\gamma} > \gamma$, $\eps_0 \in \{e^{-p}, p \geq 1\}$ and $x \in D$, define the good event at $x$:
\begin{equation}\label{eq:def good event}
G_\eps(x,\eps_0) := \left\{ \forall r \in [\eps,\eps_0], \frac{1}{\bar{r}} L_{x,\bar{r}}(\tau) \leq \widetilde{\gamma}^2 \left( \log \bar{r} \right)^2  \right\}
\end{equation}
where for $r >0$, we denote by $\bar{r} = \inf \left( \{e^{-p}, p \geq 1\} \cap [r,\infty) \right)$.
We also define
\begin{equation}\label{eq:def nu tilde}
\widetilde{\nu}_\eps^\gamma (dx, dt) = \nu_\eps^\gamma(dx,dt) \mathbf{1}_{G_\eps(x,\eps_0)} \indic{\abs{x-x_0} > \eps_0, \mathrm{d}(x,\partial D) > \eps_0}.
\end{equation}
To ease computations, we change a bit the definition of good events that we associate to $\mu_\eps^\gamma$:
\begin{equation}\label{eq:def good event exponential}
G'_\eps(x,\eps_0, M) := G_\eps(x,\eps_0) \cap \left\{ \abs{ \sqrt{\frac{1}{\eps} L_{x,\eps}(\tau)} - \gamma \log \frac{1}{\eps} } \leq M \sqrt{\abs{ \log \eps}} \right\},
\end{equation}
and we define
\begin{equation}
\label{eq:def mu tilde}
\widetilde{\mu}_\eps^\gamma (dx) = \mu_\eps^\gamma(dx) \mathbf{1}_{G'_\eps(x,\eps_0,M)} \indic{\abs{x-x_0} > \eps_0, \mathrm{d}(x,\partial D) > \eps_0}.
\end{equation}
This change of good event is purely technical: it will allow us to easily transfer computations linked to $\widetilde{\mu}_\eps^\gamma$ (in Proposition \ref{prop:bdd in L2 exponential}) to computations linked to $\widetilde{\nu}_\eps^\gamma$ (in Proposition \ref{prop:bdd in L2}) rather than repeating arguments which are very similar.

\section{First moment estimates}\label{sec:first moment estimates}

In this section, we give estimates on the first moment of $\nu_\eps^\gamma(A \times T)$ and $\mu_\eps^\gamma(A)$ and we show that adding the good events $G_\eps(x,\eps_0)$ and $G'_\eps(x,\eps_0,M)$ does not change the behaviour of the first moment.

\begin{proposition}\label{prop:first moment estimates}
We have the following estimate
\begin{equation}\label{eq:prop first moment limit}
\lim_{\eps \to 0} \EXPECT{x_0}{\nu_\eps^\gamma (A \times T)}
=
\int_T e^{- \gamma t} \gamma dt \int_A R(x,D)^{\gamma^2/2} G_D(x_0,x) dx.
\end{equation}
Moreover, for all $\eps < \eps_0$,
\begin{equation}\label{eq:prop first moment good event}
0 \leq \EXPECT{x_0}{\nu_\eps^\gamma (A \times T)} - \EXPECT{x_0}{\widetilde{\nu}_\eps^\gamma (A \times T)} \leq p(\eps_0)
\end{equation}
with $p(\eps_0) \to 0$ as $\eps_0 \to 0$. $p(\eps_0)$ may depend on $\gamma,\widetilde{\gamma},T$.
\end{proposition}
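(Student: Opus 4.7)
The plan for \eqref{eq:prop first moment limit} is to express the first moment as an integral of a one-point probability and to estimate it via a two-step Markov decomposition. Since $T=(b,\infty)$, by Fubini,
\[
\EXPECT{x_0}{\nu_\eps^\gamma(A\times T)} = \abs{\log\eps}\,\eps^{-\gamma^2/2}\int_A \PROB{x_0}{L_{x,\eps}(\tau) > \eps(\gamma\abs{\log\eps}+b)^2}\,dx
\]
for $\eps$ small enough that $\gamma\abs{\log\eps}+b>0$. For fixed $x\in A$, I would apply the strong Markov property at $\tau_{\partial D(x,\eps)}$, factoring the probability as $\PROB{x_0}{\tau_{\partial D(x,\eps)}<\tau}$ times the tail of $L_{x,\eps}(\tau)$ started from $B_{\tau_{\partial D(x,\eps)}}\in\partial D(x,\eps)$. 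Lemma \ref{lem:hitting probability} handles the first factor, and Lemma \ref{lem:approx local times exponential} with $C=\partial D$ --- combined with Lemma \ref{lem:compute excursion measure} (taking $B=\varnothing$) and Lemma \ref{lem:Green function estimate} --- shows that the second factor is approximately exponential with mean $2\eps\log(R(x,D)/\eps)(1+o(1))$. Substituting $s=\eps(\gamma\abs{\log\eps}+b)^2$ and expanding the ratio $s/(2\eps\log(R(x,D)/\eps)) = \frac{\gamma^2}{2}\abs{\log\eps}-\frac{\gamma^2}{2}\log R(x,D)+\gamma b+o(1)$ gives the pointwise limit $G_D(x_0,x)R(x,D)^{\gamma^2/2}e^{-\gamma b}$ of the normalised integrand; dominated convergence (with bound a constant multiple of $G_D(x_0,x)$, locally integrable in $\R^2$) together with $e^{-\gamma b}=\int_b^\infty\gamma e^{-\gamma t}\,dt$ then yields \eqref{eq:prop first moment limit}.

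For \eqref{eq:prop first moment good event}, write the difference as $\abs{\log\eps}\eps^{-\gamma^2/2}\int_A \PROB{x_0}{L_{x,\eps}(\tau)>\eps(\gamma\abs{\log\eps}+b)^2,\,E_x}\,dx$, where $E_x:=G_\eps(x,\eps_0)^c\cup\{\abs{x-x_0}\leq\eps_0\}\cup\{\mathrm{d}(x,\partial D)\leq\eps_0\}$. The contributions of $\{\abs{x-x_0}\leq\eps_0\}$ and $\{\mathrm{d}(x,\partial D)\leq\eps_0\}$ are controlled by the uniform Part 1 bound together with $\int_{\abs{x-x_0}\leq\eps_0}G_D(x_0,x)\,dx=O(\eps_0^2\abs{\log\eps_0})$ and the analogous estimate near $\partial D$. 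For the main $G_\eps(x,\eps_0)^c$ term I decompose via a union bound over the discrete scales $r_k:=e^{-k}$, $k\in[k_0,n]$ with $k_0:=\lceil\abs{\log\eps_0}\rceil$ and $n:=\lfloor\abs{\log\eps}\rfloor$, and for each $k$ estimate
\[
\PROB{x_0}{L_{x,\eps}(\tau)>\eps(\gamma n+b)^2,\;L_{x,r_k}(\tau)>\widetilde\gamma^2 r_k k^2}
\]
by three ingredients: Lemma \ref{lem:hitting probability} to bound the probability of hitting $\partial D(x,r_k)$ by $\lesssim G_D(x_0,x)/k$; Lemma \ref{lem:approx local times exponential} to give that, conditionally on hitting, $L_{x,r_k}(\tau)/r_k$ is approximately exponential with mean $2k$; and the Bessel description \eqref{eq:prop local times and Bessel process} to identify, conditionally on $L_{x,r_k}(\tau)=sr_k$, the ratio $L_{x,\eps}(\tau)/\eps$ with $R_{n-k}^2$ for a $0$-dimensional Bessel $R$ started from $\sqrt s$. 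The joint event becomes $\{\sqrt s\geq\widetilde\gamma k\}\cap\{R_{n-k}\geq\gamma n\}$; combining the Gaussian-type Bessel tail estimate (forward-referenced as Lemma \ref{lem:Bessel tail asymptotic}) with a Laplace evaluation at the boundary saddle $\sqrt s=\widetilde\gamma k$ (which lies strictly above the unconstrained saddle $\sqrt s=\gamma k$) produces an exponent of order $-\gamma^2 n/2-(\widetilde\gamma-\gamma)^2 k/2$. After multiplying by the prefactor $\abs{\log\eps}\eps^{-\gamma^2/2}=n e^{\gamma^2 n/2}$, the leading $-\gamma^2 n/2$ cancels and the summation reduces to $\sum_{k\geq k_0}C\,e^{-(\widetilde\gamma-\gamma)^2 k/2}\leq C'\eps_0^{(\widetilde\gamma-\gamma)^2/2}$, which is $o_{\eps_0\to 0}(1)$ uniformly in $\eps<\eps_0$.

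The main obstacle is making the Bessel joint-probability estimate sharp enough for the union bound to close uniformly in $\eps$. A direct Chernoff application to $\PROB{}{R_{n-k}\geq\gamma n}$ loses a polynomial prefactor (one is left with something of order $\abs{\log\eps}/k_0$), which does not vanish when $\eps\to 0$ with $\eps_0$ fixed; it is essential to use the Gaussian-density asymptotics of the $0$-dimensional Bessel transition, contained in Lemma \ref{lem:Bessel tail asymptotic}. A secondary technical point is that \eqref{eq:prop local times and Bessel process} is an exact identity only when $D$ is a disc concentric with $x$; for a general domain a quantitative approximate version with errors controllable simultaneously at every scale $r_k$ is needed, and this is where Lemmas \ref{lem:approx local times exponential} and \ref{lem:compute excursion measure} play their essential role.
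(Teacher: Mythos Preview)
Your treatment of \eqref{eq:prop first moment limit} coincides with the paper's: Markov property at $\tau_{\partial D(x,\eps)}$, Lemma~\ref{lem:hitting probability} for the hitting factor, and Lemmas~\ref{lem:approx local times exponential}--\ref{lem:compute excursion measure} together with Lemma~\ref{lem:Green function estimate} for the exponential tail of $L_{x,\eps}(\tau)$.

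For \eqref{eq:prop first moment good event} you take a different route. The paper reduces to a disc, then invokes Lemma~\ref{lem:Bessel first moment} as a black box (whose proof, in Appendix~\ref{sec:Appendix}, is a scale-by-scale estimate in the same spirit as yours). You instead unroll the argument: union bound over the discrete radii $r_k=e^{-k}$, then a two-scale joint estimate combining the exponential law of $L_{x,r_k}$ with the Bessel tail Lemma~\ref{lem:Bessel tail asymptotic} for $L_{x,\eps}$ given $L_{x,r_k}$. Your saddle-point computation is correct and produces the right decay $e^{-(\widetilde\gamma-\gamma)^2 k/2}$, and you correctly identify that the sharp $1/\lambda$ prefactor in Lemma~\ref{lem:Bessel tail asymptotic} (not a Chernoff bound) is what makes the $n$-dependence cancel. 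This is a legitimate alternative and arguably more transparent than citing Lemma~\ref{lem:Bessel first moment}.

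There is, however, one genuine gap. You use \eqref{eq:prop local times and Bessel process} to assert that, conditionally on $L_{x,r_k}(\tau)=sr_k$, the variable $L_{x,\eps}(\tau)/\eps$ has the law of $R_{n-k}^2$ for a zero-dimensional Bessel process started from $\sqrt s$. This is true only when $D$ is a disc centred at $x$; in a general domain the Brownian motion can leave $D(x,r_k)$, wander, and return, so the radial Markov structure underlying \eqref{eq:prop local times and Bessel process} is broken. You acknowledge this and propose to repair it with Lemmas~\ref{lem:approx local times exponential} and~\ref{lem:compute excursion measure}, but those lemmas only approximate $L_{x,r}(\tau)$ at a \emph{single} radius by an exponential variable; they say nothing about the conditional law of $L_{x,\eps}$ given $L_{x,r_k}$, which is what your Bessel step requires. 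The paper's fix is a one-line monotonicity argument: both events $\{L_{x,\eps}(\tau)>\eps(\gamma n+b)^2\}$ and $\{L_{x,r_k}(\tau)>\widetilde\gamma^2 r_k k^2\}$ are increasing in the domain, so replacing $D$ by a disc $D(x,R_0)\supset D$ only enlarges the probability, and on the disc \eqref{eq:prop local times and Bessel process} is exact. With this insertion your union-bound argument goes through as written.
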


\begin{proposition}\label{prop:first moment estimates exponential}
We have the following estimate
\begin{equation}\label{eq:prop first moment exponential limit}
\lim_{\eps \to 0} \EXPECT{x_0}{\mu_\eps^\gamma (A)}
=
\sqrt{2 \pi} \gamma \int_A R(x,D)^{\gamma^2/2} G_D(x_0,x) dx.
\end{equation}
Moreover, for all $\eps < \eps_0$,
\begin{equation}\label{eq:prop first moment exponential good event}
0 \leq \EXPECT{x_0}{\mu_\eps^\gamma (A)} - \EXPECT{x_0}{\widetilde{\mu}_\eps^\gamma (A)} \leq p'(\eps_0,M)
\end{equation}
with $p'(\eps_0,M) \to 0$ as $\eps_0 \to 0$ and $M \to \infty$. $p'(\eps_0,M)$ may depend on $\gamma,\widetilde{\gamma}$.
\end{proposition}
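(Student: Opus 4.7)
The plan is to evaluate $\EXPECT{x_0}{e^{\gamma\sqrt{L_{x,\eps}(\tau)/\eps}}}$ pointwise in $x\in A$ and then integrate via Fubini. By the strong Markov property at $\tau_{\partial D(x,\eps)}$, and using that $L_{x,\eps}(\tau)=0$ on $\{\tau<\tau_{\partial D(x,\eps)}\}$,
\[
\EXPECT{x_0}{e^{\gamma\sqrt{L_{x,\eps}(\tau)/\eps}}}
= \PROB{x_0}{\tau<\tau_{\partial D(x,\eps)}}
+ \EXPECT{x_0}{\indic{\tau_{\partial D(x,\eps)}<\tau}\,
 \EXPECT{B_{\tau_{\partial D(x,\eps)}}}{e^{\gamma\sqrt{L_{x,\eps}(\tau)/\eps}}}}.
\]
Lemma \ref{lem:hitting probability} gives the asymptotics of the hitting probability as $G_D(x_0,x)/\log(1/\eps)\,(1+o(1))$. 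For the inner expectation starting from $y\in\partial D(x,\eps)$, I would combine Lemma \ref{lem:approx local times exponential} (with $C=\partial D$) with Lemma \ref{lem:compute excursion measure} (taking $B=\varnothing$, so that the return term in \eqref{eq:lem compute excursion measure} vanishes) to identify $L_{x,\eps}(\tau)/\eps$ under $\prob_y$, up to multiplicative errors $1+o(1)$ uniform in $y$, with an exponential variable of mean $2m$, where $m:=\log(1/\eps)+\log R(x,D)+o(1)$ by \eqref{eq:Green estimate domain}.

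The main computation then reduces to a Gaussian saddle point: after the substitution $u=\sqrt{t}$,
\[
\frac{1}{2m}\int_0^\infty e^{\gamma\sqrt{t}-t/(2m)}\,dt
= \frac{e^{m\gamma^2/2}}{m}\int_0^\infty u\,e^{-(u-\gamma m)^2/(2m)}\,du
\sim \gamma\sqrt{2\pi m}\,e^{m\gamma^2/2}
\]
as $m\to\infty$, the mass concentrating at $u=\gamma m$ with Gaussian fluctuations of order $\sqrt{m}$. Since $\sqrt{m}\sim\sqrt{|\log\eps|}$ and $e^{m\gamma^2/2}=\eps^{-\gamma^2/2}R(x,D)^{\gamma^2/2}(1+o(1))$, one gets $\EXPECT{y}{e^{\gamma\sqrt{L_{x,\eps}(\tau)/\eps}}}\sim\gamma\sqrt{2\pi|\log\eps|}\,\eps^{-\gamma^2/2}R(x,D)^{\gamma^2/2}$. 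Combining with the hitting probability and multiplying by $\sqrt{|\log\eps|}\,\eps^{\gamma^2/2}$ gives pointwise convergence to $\sqrt{2\pi}\gamma\, R(x,D)^{\gamma^2/2}G_D(x_0,x)$; a crude uniform bound obtained from Lemmas \ref{lem:Green function estimate}--\ref{lem:hitting probability} then justifies dominated convergence, yielding \eqref{eq:prop first moment exponential limit}.

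For \eqref{eq:prop first moment exponential good event} the sign is immediate from $\widetilde{\mu}_\eps^\gamma\le\mu_\eps^\gamma$. The contribution of $\{|x-x_0|<\eps_0\}\cup\{\mathrm{d}(x,\partial D)<\eps_0\}$ is controlled directly by the pointwise asymptotic above, since the Green function is locally integrable and the excluded set has Lebesgue measure tending to $0$ with $\eps_0$. The contribution of violating the window $\{|\sqrt{L_{x,\eps}(\tau)/\eps}-\gamma\log(1/\eps)|\le M\sqrt{|\log\eps|}\}$ is precisely what the Gaussian saddle point above discards: restricting the $u$-integral to $|u-\gamma m|>M\sqrt{m}$ gives a Gaussian tail whose total weight tends to $0$ as $M\to\infty$, uniformly in $\eps$. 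For the scale event $G_\eps(x,\eps_0)$, I would use a union bound over dyadic scales $r_0\in[\eps,\eps_0]\cap\{e^{-p}:p\ge 1\}$; applying the Markov property at $\tau_{\partial D(x,r_0)}$ and the Bessel comparison \eqref{eq:prop local times and Bessel process} to the exponential moment at scale $\eps$ given the local time at scale $r_0$, one obtains, for each $r_0$, a bound of order $r_0^{(\widetilde\gamma^2-\gamma^2)/2}$, and the sum over $r_0\le\eps_0$ goes to $0$ with $\eps_0$ since $\widetilde{\gamma}>\gamma$.

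The main obstacle is the scale event, because the exact Bessel identity \eqref{eq:prop local times and Bessel process} is valid only when $D$ is a disc centered at $x$. For a general domain $D$ one must argue that, uniformly in the local time at scale $r_0$, excursions from $\partial D(x,r_0)$ that exit $D$ before returning contribute only a subleading correction to the Bessel-type bound at every lower scale. This is where the more refined Bessel-process estimates (Lemmas \ref{lem:Bessel first moment} and \ref{lem:Bessel tail asymptotic}) and the hitting-probability/excursion-measure control from Section \ref{sec:preliminaries} are needed to propagate the tail bound from scale $r_0$ all the way down to the exponential factor at scale $\eps$.
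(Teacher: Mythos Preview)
Your treatment of \eqref{eq:prop first moment exponential limit} and of the window event in \eqref{eq:prop first moment exponential good event} is essentially identical to the paper's: the same Markov decomposition at $\tau_{\partial D(x,\eps)}$, the same exponential approximation of $L_{x,\eps}(\tau)/\eps$ with mean $2(\log(1/\eps)+\log R(x,D)+o(1))$, the same Gaussian saddle computation (the paper uses $u=\sqrt{t}-\gamma\sqrt{\theta/2}$ rather than $u=\sqrt{t}$, but this is cosmetic), and the same observation that the window constraint restricts $u$ to a bounded interval whose complement carries Gaussian-tail mass $o_{M\to\infty}(1)$.

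Where you diverge from the paper is in handling the scale event $G_\eps(x,\eps_0)$, and here you are making life harder than necessary. You correctly flag that the Bessel identity \eqref{eq:prop local times and Bessel process} holds only in a disc, and then propose to control the discrepancy via excursion analysis at each scale. The paper bypasses this entirely with a one-line monotonicity argument: the quantity
\[
\EXPECT{x_0}{e^{\gamma\sqrt{L_{x,\eps}(\tau)/\eps}}} - \EXPECT{x_0}{e^{\gamma\sqrt{L_{x,\eps}(\tau)/\eps}}\mathbf{1}_{G_\eps(x,\eps_0)}}
= \EXPECT{x_0}{e^{\gamma\sqrt{L_{x,\eps}(\tau)/\eps}}\mathbf{1}_{G_\eps(x,\eps_0)^c}}
\]
is monotone in the domain (both $L_{x,\eps}(\tau)$ and the bad event $G_\eps(x,\eps_0)^c$ increase when $D$ is enlarged). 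Hence one may simply replace $D$ by a large disc centred at $x$, after which \eqref{eq:prop local times and Bessel process} applies exactly and Lemmas~\ref{lem:Bessel first moment} (claim \eqref{eq:lem Bessel first moment good event exponential}) and \ref{lem:Bessel tail asymptotic} (claim \eqref{eq:lem Bessel expectation exponential}) finish the job, the former already packaging the union bound over scales internally. Your union-bound strategy with scale-by-scale decay $r_0^{c}$ is morally what happens inside the proof of Lemma~\ref{lem:Bessel first moment}, but the monotonicity trick is what lets you invoke that lemma cleanly rather than redoing the excursion decomposition in a general domain.
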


The estimates \eqref{eq:prop first moment limit} and \eqref{eq:prop first moment exponential limit} will be computations on the local times made possible thanks to Section \ref{sec:preliminaries}. To prove \eqref{eq:prop first moment good event} and \eqref{eq:prop first moment exponential good event}, we will be able to transfer all the computations to the zero-dimensional Bessel process. For this reason, we first start by stating the analogue of \eqref{eq:prop first moment good event} and \eqref{eq:prop first moment exponential good event} for this process (recall that we denote $\brob_r$ the law under which $(R_t, t \geq 0)$ is a zero-dimensional Bessel process starting from $r$):

\begin{lemma}\label{lem:Bessel first moment}
Let $\widetilde{\gamma} > \gamma > 0$, $b, \widetilde{b} \in \R$, $r_0,s_0 >0$ and define for all $t >s_0$ the event
\[
E_t(s_0) := \left\{ \forall s \in \N \cap [s_0,t], R_s \leq \widetilde{\gamma} s + \widetilde{b} \right\}.
\]
For all starting point $r \in (0,r_0)$, for all $t > s_0$,
\begin{align}
\label{eq:lem Bessel first moment good event}
\BROB{r}{E_t(s_0) \vert R_t \geq \gamma t+b} & \geq 1 - p(s_0), \\
\label{eq:lem Bessel first moment good event exponential}
\EXPECTB{r}{ e^{\gamma R_t} \mathbf{1}_{E_t(s_0)} } & \geq (1-p(s_0)) \EXPECTB{r}{ e^{\gamma R_t} }
\end{align}
with $p(s_0) \to 0$ as $s_0 \to \infty$. $p(s_0)$ may depend on $\gamma,\widetilde{\gamma},b,\widetilde{b},r_0$.
\end{lemma}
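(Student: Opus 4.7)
The plan is a union bound: decompose $E_t(s_0)^c = \bigcup_{s\in\N\cap[s_0,t]} \{R_s > \widetilde{\gamma}s+\widetilde{b}\}$ and control each term by something summable in $s$. The decay is driven by the gap $\widetilde{\gamma}>\gamma$: both the conditioning on $\{R_t\geq\gamma t+b\}$ and the tilt by $e^{\gamma R_t}$ make paths with $R_s\approx\gamma s$ dominant, so excursions above $\widetilde{\gamma}s+\widetilde{b}$ incur a Gaussian large-deviation cost of order $e^{-(\widetilde{\gamma}-\gamma)^2 s/2}$, which is geometrically summable in $s$.

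For the denominators I would use the explicit transition density of the zero-dimensional Bessel process, $p_s(r,r') = (r/s)\,e^{-(r^2+r'^2)/(2s)}\, I_1(rr'/s)$ (which follows from the fact that $R^2$ is $\mathrm{BESQ}(0)$), together with the small-argument expansion $I_1(z) \sim z/2$ valid on the relevant range $rr'/s \leq r_0\widetilde{\gamma}$. A Gaussian integration around the saddle $r'\approx\gamma t$ then yields lower bounds of the form
\[
\BROB{r}{R_t\geq\gamma t+b} \gtrsim r^{2}\, t^{-\alpha}\, e^{-\gamma^2 t/2}, \qquad \EXPECTB{r}{e^{\gamma R_t}} \gtrsim r^{2}\, t^{-\alpha}\, e^{\gamma^2 t/2},
\]
uniformly in $r\in(0,r_0)$, with implicit constants and $\alpha\ge 0$ depending only on $\gamma,b,r_0$.

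To bound each bad-$s$ contribution I would apply the Markov property at time $s$ together with the pathwise domination $R_u\leq R_0+W_u$ (valid because the drift of the $0$-dimensional Bessel is nonpositive), which gives $\BROB{r'}{R_{t-s}\geq\gamma t+b} \leq \PROB{}{r'+W_{t-s}\geq\gamma t+b}$ and $\EXPECTB{r'}{e^{\gamma R_{t-s}}}\leq e^{\gamma r'+\gamma^2(t-s)/2}$. Substituting these into the Markov decomposition and keeping the explicit $I_1(z)\sim z/2$ factor in $p_s(r,r')$, a Gaussian tail computation for part~\eqref{eq:lem Bessel first moment good event}, respectively a Cameron--Martin shift of $W$ by $\gamma$ for part~\eqref{eq:lem Bessel first moment good event exponential}, produces an upper bound that carries the same $r^2 t^{-\alpha}$ prefactor and an extra factor $e^{-(\widetilde{\gamma}-\gamma)^2 s/2}$. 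Dividing by the lower bound above, the prefactors cancel and summing over $s\in\N\cap[s_0,t]$ leaves a geometric series of sum $O(e^{-(\widetilde{\gamma}-\gamma)^2 s_0/2}) \to 0$, uniformly in $t>s_0$ and $r\in(0,r_0)$.

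The main obstacle is precisely this uniformity in $r$ near zero. The denominators vanish like $r^2$ as $r\to 0$ (reflecting absorption of the Bessel process at the origin), so a crude Brownian-domination bound for the numerator --- which produces only an $O(1)$ prefactor in $r$ --- is insufficient; the explicit density must be used on both sides so that the $r^2$-factors match and cancel. A secondary technical point is that for very large $r'$ (outside the range $rr'/s\leq r_0\widetilde{\gamma}$, which occurs only in a Gaussian tail of the integral) one must fall back on $I_1(z)\leq e^z$, giving $p_s(r,r') \leq (r/s)\,e^{-(r-r')^2/(2s)}$; the Gaussian decay in $r'$ makes this regime contribute negligibly and is easily absorbed into the constants.
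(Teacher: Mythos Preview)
Your overall plan---union bound over integers $s\in[s_0,t]$, Markov property at time $s$, and geometric summability coming from the gap $\widetilde\gamma>\gamma$---is exactly the paper's strategy. You also correctly isolate the $r\to 0$ issue and handle it by keeping the explicit factor $I_1(r r'/s)\sim r r'/(2s)$ in $q_s(r,r')$, so that both numerator and denominator carry the same $r^2$ prefactor.

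There is, however, a genuine gap: the pathwise bound $R_u\le R_0+W_u$ applied to the second factor $\BROB{r'}{R_{t-s}\ge\gamma t+b}$ (and likewise $\EXPECTB{r'}{e^{\gamma R_{t-s}}}\le e^{\gamma r'+\gamma^2(t-s)/2}$) is too crude and destroys the uniformity in $t$. The mass of your integrand sits near the boundary $r'=\widetilde\gamma s$, and for such $r'$ the Bessel process has to survive a long time $t-s$ before reaching level $\gamma t$; its true probability of doing so is smaller than the Brownian one by a factor of order $\sqrt{r'/(\gamma t)}$ (this is the $\sqrt{x/y}$ coming from $I_1(z)\sim e^z/\sqrt{2\pi z}$ in $q_{t-s}$). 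Carrying the Brownian bound through your Gaussian computation gives a ratio of the form $C\,\sqrt{t}\,s^{-1}\,e^{-(\widetilde\gamma-\gamma)^2 s/2}$, and summing over $s\ge s_0$ leaves a bound proportional to $\sqrt{t}$, which is not uniform. The same defect appears in the exponential version.

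The paper sidesteps this entirely: instead of bounding the numerator and denominator separately, it uses the monotonicity inequality $I_1(v)\le (v/u)e^{v-u}I_1(u)$ to compare the two-time density at $(R_s,R_t)=(\widetilde\gamma x/\gamma,\,y)$ directly with the density at $(x,y)$. This produces a pointwise bound $q_s(r,\widetilde\gamma x/\gamma)\,q_{t-s}(\widetilde\gamma x/\gamma,y)\le C\,q_s(r,x)\,q_{t-s}(x,y)\,\exp(-c\,x)$ on the relevant range, from which the $e^{-cs}$ factor drops out after integration with no stray polynomial in $t$. Your argument can be repaired by replacing the Brownian domination with the sharper estimate $\BROB{r'}{R_{t-s}\ge\lambda}\le C\sqrt{r'/\lambda}\,\prob(r'+W_{t-s}\ge\lambda)$ (valid once $r'\lambda/(t-s)$ is bounded below, which holds on the range that matters), but as written the step does not go through.
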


In the previous proposition, the starting point $r$ was required to stay bounded away from infinity. To come back to this situation, we will need the following:

\begin{lemma}\label{lem:Bessel tail asymptotic}
1) Let $a>0$. There exists $C = C(a) >0$ such that for all $t > 0 $, $\lambda \geq at$ and $r \in (1, \lambda/2)$,
\[
\BROB{r}{R_t \geq \lambda} \leq C \sqrt{r} e^{\frac{\lambda r}{t}} \frac{1}{\lambda} e^{-\frac{\lambda^2}{2t}}.
\]
2) Let $\gamma>0$. There exists $C = C(\gamma)>0$ such that for all $t >0$, for all $r \in (1,\gamma t/2)$,
\begin{equation}
\label{eq:lem Bessel expectation exponential}
\EXPECTB{r}{e^{\gamma R_t}} \leq C \sqrt{r} e^{C r} \frac{1}{\sqrt{t}} e^{\gamma^2 t/2}.
\end{equation}
\end{lemma}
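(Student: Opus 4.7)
The plan is to work directly from the explicit transition density of the zero-dimensional Bessel process. Recall that under $\brob_r$ the random variable $R_t$ has an atom at zero of mass $e^{-r^2/(2t)}$ together with a density on $(0,\infty)$ of the form
\[
f_t(r,\rho) = \frac{r}{t}\exp\!\left(-\frac{\rho^2 + r^2}{2t}\right) I_1\!\left(\frac{\rho r}{t}\right),
\]
where $I_1$ is the modified Bessel function of order one. The only analytic input beyond this that I will use is the uniform bound $I_1(z) \leq C\,e^z/\sqrt{z+1}$ valid for every $z > 0$, which follows from the classical asymptotic $I_1(z) \sim e^z/\sqrt{2\pi z}$ together with the fact that $I_1(z)\,e^{-z}\sqrt{z+1}$ stays bounded as $z \downarrow 0$. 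Once this is in hand, both estimates reduce to Gaussian integration.

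For part 1, I would plug the density into $\brob_r(R_t \geq \lambda) = \int_\lambda^\infty f_t(r,\rho)\,d\rho$ and apply the bound on $I_1$. The algebraic identity $-(\rho^2+r^2)/(2t) + \rho r/t = -(\rho-r)^2/(2t)$ turns the integrand into a Gaussian weight times the slowly varying factor $\sqrt{r/\rho}$. After the substitution $\sigma = (\rho - r)/\sqrt{t}$, the estimate $\sqrt{r/\rho} \leq \sqrt{r/\lambda}$ on the integration domain together with $\int_x^\infty e^{-\sigma^2/2}\,d\sigma \leq e^{-x^2/2}/x$ yields a bound of the form $C\,\sqrt{rt}\,\lambda^{-3/2}\,e^{-(\lambda-r)^2/(2t)}$. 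Replacing $\lambda - r$ by $\lambda/2$ in the denominator (using $r < \lambda/2$), converting $\sqrt{t/\lambda}$ into the constant $1/\sqrt{a}$ (using $\lambda \geq at$), and splitting $(\lambda-r)^2/(2t) \geq \lambda^2/(2t) - \lambda r/t$ in the exponential delivers exactly the announced form.

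For part 2, the atom at zero contributes at most $1$, which is easily absorbed into the right-hand side because $e^{\gamma^2 t/2}/\sqrt{t}$ is bounded below by $\gamma\sqrt{e}$ for all $t > 0$. For the continuous part I would again apply the bound on $I_1$; completing the square this time gives $\gamma\rho - (\rho-r)^2/(2t) = -(\rho - \gamma t - r)^2/(2t) + \gamma^2 t/2 + \gamma r$, and after the substitution $\sigma = (\rho - \gamma t - r)/\sqrt{t}$ the integrand becomes $r/\sqrt{A + r\sqrt{t}\,\sigma}$ times a unit Gaussian, with $A = r\gamma t + r^2 + t$. The key observation is that the affine function $A + r\sqrt{t}\,\sigma$ takes the value $t$ at the lower endpoint $\sigma = -(\gamma t+r)/\sqrt{t}$ of integration and is increasing, so it stays $\geq t$ over the whole range; this produces the uniform prefactor $r/\sqrt{t}$ outside a Gaussian integral and hence the bound $Cr\,e^{\gamma r}\,e^{\gamma^2 t/2}/\sqrt{t}$. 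The elementary inequality $r \leq \sqrt{r}\,e^{r/2}$ for $r \geq 1$ then upgrades $r\,e^{\gamma r}$ to $\sqrt{r}\,e^{(\gamma + 1/2)r}$, giving the stated form.

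The substantive step is really just the quantitative control of $I_1$ on all of $(0,\infty)$; everything afterwards is routine Gaussian calculus, with the assumptions $r < \lambda/2$ and $\lambda \geq at$ (resp.\ $r < \gamma t/2$) used only to combine the leading $\sqrt{t/\lambda}$ (resp.\ $r/\sqrt{t}$) terms and to absorb the atom at zero. No fundamental obstacle is expected, only careful bookkeeping of the constants depending on $a$ and $\gamma$.
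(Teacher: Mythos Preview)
Your argument is correct. For part 1 it is essentially identical to the paper's: the paper also integrates the transition density, replaces $I_1$ by its exponential asymptotic (using $rx/t\geq a$ to stay away from the origin, where you instead invoke the uniform bound $I_1(z)\leq Ce^z/\sqrt{z+1}$), completes the square, and finishes with the Gaussian tail estimate.

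For part 2 you take a genuinely different route. The paper derives it from part 1 via the layer-cake representation
\[
\EXPECTB{r}{e^{\gamma R_t}}\ \leq\ e^{\gamma^2 t/4}+\int_{e^{\gamma^2 t/4}}^\infty \BROB{r}{R_t\geq \tfrac{\log\mu}{\gamma}}\,d\mu,
\]
feeding in the tail bound just proved. You instead compute the expectation directly from the density: completing the square in $\gamma\rho-(\rho-r)^2/(2t)$ and then observing that the denominator $\rho r/t+1=(A+r\sqrt{t}\sigma)/t$ is bounded below by $1$ on the whole integration range (since it equals $1$ at the lower endpoint and is increasing). This is a clean self-contained argument; in fact it never uses the hypothesis $r<\gamma t/2$, which in the paper's approach is needed to ensure that the tail bound of part 1 applies in the relevant range of $\mu$. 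The paper's route is slightly more conceptual (one estimate feeds the other) while yours gives a marginally stronger statement with no extra work.
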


The first and second points will be used to prove Propositions \ref{prop:first moment estimates} and \ref{prop:first moment estimates exponential} respectively.
The two previous lemmas will be proven in Appendix \ref{sec:Appendix} and we now prove Propositions \ref{prop:first moment estimates} and \ref{prop:first moment estimates exponential}.

\begin{proof}[Proof of Proposition \ref{prop:first moment estimates}]
We start by proving \eqref{eq:prop first moment limit}. We have
\begin{equation}
\label{eq:proof first moment1}
\EXPECT{x_0}{\nu_\eps^\gamma (A \times T)}
=
\int_A \abs{\log \eps} \eps^{-\gamma^2/2}  \PROB{x_0}{ \sqrt{\frac{1}{\eps} L_{x,\eps}(\tau)} > \gamma \log \frac{1}{\eps} + b} dx
\end{equation}
and we are going to estimate the probability
\begin{equation}
\label{eq:proof first moment2}
\PROB{x_0}{ \sqrt{\frac{1}{\eps} L_{x,\eps}(\tau)} > \gamma \log \frac{1}{\eps} + b}.
\end{equation}
Assume that $\eps > 0$ is small enough so that 
$\gamma \abs{\log \eps} + b >0$
to ensure that the probability we are interested in is not trivial. Take $a \in (\gamma^2/4,1)$. If $x$ is at distance at most $\eps^a$ from $x_0$, we bound from above the probability \eqref{eq:proof first moment2} by 1 and the contribution to the integral \eqref{eq:proof first moment1} of such points is at most $C \eps^{2a - \gamma^2 / 2} \log 1/ \eps$ which goes to zero as $\eps \to 0$.

Let $\eta >0$. We are now going to deal with points $x \in D$ at distance at least $\eps^a$ from $x_0$ and at distance at least $\eta$ from the boundary of the domain $D$. We will then explain how to deal with points close to the boundary. By Markov property, the probability \eqref{eq:proof first moment2} is equal to
\[
\PROB{x_0}{ \tau_{\partial D(x,\eps)} < \tau}
\EXPECT{x_0}{ \PROB{Y}{ \sqrt{\frac{1}{\eps} L_{x,\eps}(\tau)} > \gamma \log \frac{1}{\eps} + b} }
\]
where $Y \in \partial D(x,\eps)$ has the law of $B_{\tau_{\partial D(x,\eps)}}$ starting from $x_0$ and knowing that $\tau_{\partial D(x,\eps)} < \tau$. Take any $y \in \partial D(x,\eps)$. By Lemma \ref{lem:compute excursion measure} we have
\begin{align*}
\min_{z \in \partial D(x,\eps)} H_{x,\eps}^z(\tau < \infty) = \left( 1 + O( \eps / \eta) \right) \max_{z \in \partial D(x,\eps)} H_{x,\eps}^z(\tau < \infty)
= \left( 1 + O( \eps / \eta) \right) / \EXPECT{y}{L_{x,\eps}(\tau)}.
\end{align*}
But Lemma \ref{lem:Green function estimate} gives
\[
\EXPECT{y}{L_{x,\eps}(\tau)} = 2 \eps \left( \log 1 / \eps + \log R(x,D) + o(1) \right).
\]
Hence, with the help of Lemma \ref{lem:approx local times exponential}, starting from $y$, $L_{x,\eps}(\tau)$ is stochastically dominated and stochastically dominates exponential variables with mean equal to $2 \eps \left( \log 1 / \eps + \log R(x,D) + o_\eta(1) \right).$ It implies that
\begin{align*}
& \PROB{y}{ \sqrt{\tfrac{1}{\eps} L_{x,\eps}(\tau)} > \gamma \log \tfrac{1}{\eps} + b} \\
& = \left( 1+o_\eta(1) \right)
\Prob{ 2 \left( \log \tfrac{1}{\eps} + \log R(x,D) + o_\eta(1) \right) \mathrm{Exp}(1) > \left\{ \gamma \log \tfrac{1}{\eps} + b \right\}^2 } \\
& = \left( 1+o_\eta(1) \right) \eps^{\gamma^2 / 2} R(x,D)^{\gamma^2/2} e^{- \gamma b}.
\end{align*}
On the other hand, Lemma \ref{lem:hitting probability} shows that
\begin{equation}\label{eq:proof first momemt estimate hitting prob}
\PROB{x_0}{ \tau_{\partial D(x,\eps)} < \tau} = (1+o_\eta(1)) G_D(x_0,x) / \abs{ \log \eps}.
\end{equation}
Putting things together leads to
\begin{align*}
\int_A \log \frac{1}{\eps} \eps^{-\gamma^2/2} & \PROB{x_0}{ \sqrt{\frac{1}{\eps} L_{x,\eps}(\tau)} > \gamma \log \frac{1}{\eps} + b} \indic{\mathrm{d}(x,\partial D) > \eta} dx\\
& =
(1+o_\eta(1)) e^{- \gamma b} \int_A R(x,D)^{\gamma^2/2} G_D(x_0,x) \indic{\mathrm{d}(x,\partial D) > \eta} dx.
\end{align*}
To conclude the proof of \eqref{eq:prop first moment limit}, it is enough to show that
\begin{equation}\label{eq:proof first moment3}
\limsup_{\eps \to 0} \int_A \abs{\log \eps } \eps^{-\gamma^2/2} \PROB{x_0}{ \sqrt{\frac{1}{\eps} L_{x,\eps}(\tau)} > \gamma \log \frac{1}{\eps} + b} \indic{\mathrm{d}(x,\partial D) \leq \eta} dx = O(\eta).
\end{equation}
Consider a larger domain $\widetilde{D}$ so that $D$ is compactly included in $\widetilde{D}$. Now, all the points $x \in D$ are far away from the boundary of $\widetilde{D}$ and what we did before shows that
\[
\PROB{x_0}{ \sqrt{\frac{1}{\eps} L_{x,\eps}(\tau)} > \gamma \log \frac{1}{\eps} + b}
\leq
\PROB{x_0}{ \sqrt{\frac{1}{\eps} L_{x,\eps}(\tau_{\widetilde{D}})} > \gamma \log \frac{1}{\eps} + b}
\leq C \eps^{\gamma^2/2} / \abs{\log \eps}
\]
which shows \eqref{eq:proof first moment3}.
\medskip

We have finished to prove \eqref{eq:prop first moment limit} and we now turn to the proof of \eqref{eq:prop first moment good event}. Let $\hat{\eps}_0> \eps_0$. As we have just seen, the contribution of $\{ x \in D: \abs{x-x_0} \leq \hat{\eps}_0 \mathrm{~or~} \mathrm{d}(x,\partial D) \leq \hat{\eps}_0 \}$ to $\EXPECT{x_0}{\nu_\eps^\gamma (A \times T)}$ is $O(\hat{\eps}_0)$.
Hence $\EXPECT{x_0}{\nu_\eps^\gamma (A \times T)} - \EXPECT{x_0}{\widetilde{\nu}_\eps^\gamma (A \times T)}$ is equal to
\[
O(\hat{\eps}_0) + 
\int_A \log \frac{1}{\eps} \eps^{-\gamma^2/2}  \PROB{x_0}{ \sqrt{\frac{1}{\eps} L_{x,\eps}(\tau)} > \gamma \log \frac{1}{\eps} + b, G_\eps(x,\eps_0)^c} \indic{\abs{x-x_0} > \hat{\eps}_0, \mathrm{d}(x,\partial D) > \hat{\eps}_0} dx.
\]
Take $x \in D$ such that $\abs{x-x_0} > \hat{\eps}_0$ and $\mathrm{d}(x,\partial D)> \hat{\eps}_0$. Considering a larger domain than $D$ will increase the probability in the above integral. As we want to bound it from above, we can thus assume in the following that $D = D(x,R_0)$ where $R_0$ is the diameter of our original domain. It is convenient because we can now use \eqref{eq:prop local times and Bessel process} which relates the local times to a zero-dimensional Bessel process.

We claim that we can take $M>0$ large enough, depending only on $\hat{\eps}_0$, $R_0$ and $b$, such that
\begin{equation}
\label{eq:proof first moment4}
\PROB{x_0}{ \sqrt{\frac{1}{\eps} L_{x,\eps}(\tau)} > \gamma \log \frac{1}{\eps} + b, L_{x,\hat{\eps}_0}(\tau) \geq M} \leq \hat{\eps}_0 \frac{1}{\abs{\log \eps}} \eps^{\gamma^2/2}.
\end{equation}
Indeed, \eqref{eq:prop local times and Bessel process} and Lemma \ref{lem:Bessel tail asymptotic} imply that there exists $C= C(\hat{\eps}_0,b)>0$ such that if $\eps$ is small enough,
\begin{align*}
& \PROB{x_0}{ \left. \sqrt{\frac{1}{\eps} L_{x,\eps}(\tau)} > \gamma \log \frac{1}{\eps} + b \right\vert L_{x,\hat{\eps}_0}(\tau) = \ell} \leq
C \ell^{1/4} e^{C \sqrt{\ell}} \frac{1}{\abs{\log \eps}} \eps^{\gamma^2/2}, \mathrm{~if~} \ell \leq \frac{\hat{\eps}_0 \gamma^2}{4} \log \left( \frac{\hat{\eps}_0}{\eps} \right)^2.
\end{align*}
As, starting from any point of $\partial D(x,\hat{\eps}_0)$, $L_{x, \hat{\eps}_0}(\tau)$ is an exponential variable (with mean depending on $\hat{\eps}_0$ and $R_0$),
\[
\PROB{x_0}{L_{x,\hat{\eps}_0}(\tau) \geq \frac{\hat{\eps}_0 \gamma^2}{4} \log \left( \frac{\hat{\eps}_0}{\eps} \right)^2 }
\]
goes to zero faster than any polynomial in $\eps$ and also
\[
\EXPECT{x_0}{ L_{x,\hat{\eps}_0}(\tau)^{1/4} e^{C \sqrt{L_{x,\hat{\eps}_0}(\tau)}} \indic{L_{x,\hat{\eps}_0}(\tau) \geq M}}
\]
goes to zero as $M \to \infty$. Putting things together then leads to \eqref{eq:proof first moment4}.

On the other hand, by \eqref{eq:prop local times and Bessel process} and claim \eqref{eq:lem Bessel first moment good event} of Lemma \ref{lem:Bessel first moment} that we use with 
\[
t \leftarrow \log \frac{\hat{\eps}_0}{\eps}, s_0 \leftarrow \log \frac{\hat{\eps}_0}{\eps_0}, r_0 \leftarrow \frac{M}{\hat{\eps}_0}, b \leftarrow b + \gamma \log \frac{1}{\hat{\eps}_0} \mathrm{~and~} \widetilde{b} \leftarrow \gamma \log \frac{1}{\hat{\eps}_0},
\]
there exists $p(\eps_0)$ (which may depend on $\gamma, \widetilde{\gamma}, b, \hat{\eps}_0, M$) such that $p(\eps_0) \to 0$ as $\eps_0 \to 0$ and for all $\eps < \eps_0$,
\begin{align*}
& \PROB{x_0}{ \sqrt{\frac{1}{\eps} L_{x,\eps}(\tau)} > \gamma \log \frac{1}{\eps} + b, G_\eps(x,\eps_0)^c, L_{x,\hat{\eps}_0}(\tau) \leq M}\\
& \leq
\EXPECT{x_0}{ \indic{L_{x,\hat{\eps}_0}(\tau) \leq M} \BROB{\sqrt{L_{x,\hat{\eps}_0}(\tau)/\hat{\eps}_0}}{R_t \geq \gamma t + b + \gamma \log \frac{1}{\hat{\eps}_0} , E_t(s_0)^c }} \\
& \leq p(\eps_0) \EXPECT{x_0}{ \indic{L_{x,\hat{\eps}_0}(\tau) \leq M} \BROB{\sqrt{L_{x,\hat{\eps}_0}(\tau)/\hat{\eps}_0}}{R_t \geq \gamma t + b + \gamma \log \frac{1}{\hat{\eps}_0} }} \\
& = p(\eps_0) \PROB{x_0}{ \sqrt{\frac{1}{\eps} L_{x,\eps}(\tau)} > \gamma \log \frac{1}{\eps} + b, L_{x,\hat{\eps}_0}(\tau) \leq M}.
\end{align*}
With \eqref{eq:proof first moment4} it implies that
\[ \PROB{x_0}{ \sqrt{\frac{1}{\eps} L_{x,\eps}(\tau)} > \sqrt{\frac{g}{2}} \gamma \log \frac{1}{\eps} + b, G_\eps(x,\eps_0)^c}
\leq
q(\eps_0) \frac{1}{\log \eps} \eps^{\gamma^2/2}
\]
for some $q(\eps_0) \to 0$ as $\eps_0 \to 0$ which may depend on $\gamma, \widetilde{\gamma}, b$. It shows that $\EXPECT{x_0}{\nu_\eps^\gamma (A \times T)} - \EXPECT{x_0}{\widetilde{\nu}_\eps^\gamma (A \times T)} \leq C q(\eps_0)$ which finishes the proof of \eqref{eq:prop first moment good event}.

\end{proof}

We now turn to the proof of Proposition \ref{prop:first moment estimates exponential}. As it is similar to what we have just done we will be brief.

\begin{proof}[Proof of Proposition \ref{prop:first moment estimates exponential}]
Take $\eta > 0$ and $x \in D$ at distance at least $\eta$ from the boundary. As we saw before, conditioned on $\tau_{\partial D(x,\eps)} < \tau$, $L_{x,\eps}(\tau)$ is approximated by an exponential variable with mean $2 \eps ( \log 1 / \eps + \log R(x,D) + o_\eta(1))$. Hence, denoting $\theta = \log (R(x,D) / \eps) + o(1)$ and with the change of variable $u = \sqrt{t} - \gamma \sqrt{\theta / 2}$, we have
\begin{align*}
\EXPECT{x_0}{ \left. e^{\gamma \sqrt{\frac{1}{\eps} L_{x,\eps}(\tau)}} \right\vert \tau_{\partial D(x,\eps)} < \tau }
& = \left( 1+o_\eta(1) \right) \int_0^\infty e^{-t} e^{\gamma \sqrt{2 \theta t}} dt \\
& = \left( 1+o_\eta(1) \right) e^{\gamma^2 \theta / 2} \int_0^\infty e^{-(\sqrt{t} - \gamma \sqrt{\theta/2} )^2} dt \\
& = \left( 1+o_\eta(1) \right) \gamma \sqrt{2 \theta} e^{\gamma^2 \theta / 2} \int_{-\gamma \sqrt{\theta / 2}}^\infty e^{-u^2} \left( 1 + \frac{\sqrt{2}u}{\gamma \sqrt{\theta}} \right) du \\
& = (1 + o_\eta(1)) \gamma \sqrt{2 \pi} R(x,D)^{\gamma^2/2} \sqrt{\log (1/\eps)} \eps^{-\gamma^2/2}.
\end{align*}
In particular, the impact of points $x$ such that $\abs{x - x_0} \leq 1/\log (1/\eps)$ is negligible. For points that are far away from $x_0$, we can use \eqref{eq:proof first momemt estimate hitting prob} which then shows that
\[
\sqrt{\log \left( \frac{1}{\eps} \right) } \eps^{\gamma^2/2}
\EXPECT{x_0}{ e^{\gamma \sqrt{\frac{1}{\eps} L_{x,\eps}(\tau)}} }
= (1 + o_\eta(1)) \gamma \sqrt{2 \pi} R(x,D)^{\gamma^2/2} G_D(x_0,x).
\]
By the same reasoning as in the proof of Proposition \ref{prop:first moment estimates}, it concludes the proof of \eqref{eq:prop first moment exponential limit}. We now focus on \eqref{eq:prop first moment exponential good event}.
First of all, we notice that requiring $\sqrt{L_{x,\eps}(\tau)/\eps}$ to belong to the interval
\[
\left[ \gamma \log \frac{1}{\eps} - M \sqrt{\log \frac{1}{\eps}}, \gamma \log \frac{1}{\eps} + M \sqrt{\log \frac{1}{\eps}} \right]
\]
has the consequence of restraining the variable $t$ in the above computations to the interval
\[
\left[ \frac{\gamma^2}{2} \log \frac{1}{\eps} - M \gamma \sqrt{\log \frac{1}{\eps}} + O(1), \frac{\gamma^2}{2} \log \frac{1}{\eps} + M \gamma \sqrt{\log \frac{1}{\eps}} + O(1) \right]
\]
which then restrains the variable $u$ to the interval:
\[
\left[ -\frac{1}{\sqrt{2}} M + o(1), \frac{1}{\sqrt{2}} M + o(1) \right].
\]
Therefore, the integral over $u$ is still equal to $(1+o_{M \to \infty}(1))\sqrt{\pi}$ showing that we can safely forget the event
\[
\left\{ \abs{ \sqrt{\frac{1}{\eps} L_{x,\eps}(\tau)} - \gamma \log \frac{1}{\eps} } \leq M \sqrt{\log \frac{1}{\eps}} \right\}
\]
in the good event $G'_\eps(x,\eps_0,M)$.
To bound from above
\[
\EXPECT{x_0}{ e^{\gamma \sqrt{\frac{1}{\eps} L_{x,\eps}(\tau)}} } - \EXPECT{x_0}{ e^{\gamma \sqrt{\frac{1}{\eps} L_{x,\eps}(\tau)}} \mathbf{1}_{G_\eps(x,\eps_0)} },
\]
we proceed exactly as in the proof of Proposition \ref{prop:first moment estimates}. We notice that this quantity increases with the domain, so we can assume that $D$ is a disc centred at $x$ which allows us to use the link between the local times and the zero-dimensional Bessel process \eqref{eq:prop local times and Bessel process}. We then conclude as in the proof of Proposition \ref{prop:first moment estimates} using claim \eqref{eq:lem Bessel expectation exponential} of Lemma \ref{lem:Bessel tail asymptotic} and claim \eqref{eq:lem Bessel first moment good event exponential} of Lemma \ref{lem:Bessel first moment}.
\end{proof}

\section{Uniform integrability}\label{sec:uniform integrability}

This section is devoted to the following two propositions:

\begin{proposition}\label{prop:bdd in L2}
If $\widetilde{\gamma}$ is close enough to $\gamma$, then
\begin{equation}\label{eq:prop bdd in L2}
\sup_{\eps > 0} \EXPECT{x_0}{ \widetilde{\nu}_\eps^\gamma(A \times T)^2} < \infty.
\end{equation}
\end{proposition}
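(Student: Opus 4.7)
I begin by expanding
\begin{equation*}
\EXPECT{x_0}{\widetilde{\nu}_\eps^\gamma(A\times T)^2} = (\log\eps)^2\,\eps^{-\gamma^2}\int_{A^2} \PROB{x_0}{E_\eps(x,y)}\,dx\,dy,
\end{equation*}
where $E_\eps(x,y)$ denotes the intersection of the two thickness events at $x$ and $y$, of the two good events $G_\eps(x,\eps_0)$ and $G_\eps(y,\eps_0)$, and of the separation conditions. I split $A^2$ dyadically. For pairs at distance $|x-y|\geq\eps_0$, after the Brownian motion has visited the first of the two $\eps$-disks the behaviour around the second point is approximately independent, so this regime contributes at most a constant depending on $\eps_0$ times $\bigl(\EXPECT{x_0}{\widetilde{\nu}_\eps^\gamma(A\times T)}\bigr)^2$, which is uniformly bounded by Proposition \ref{prop:first moment estimates}. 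The main work is the diagonal regime $|x-y|=r\in(\eps,\eps_0)$.

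\textbf{Diagonal estimate.} For $(x,y)$ with $|x-y|=r$, I Markov-decompose the path according to the successive visits of $\partial D(x,2r)$, $\partial D(x,\eps)$ and $\partial D(y,\eps)$. Combining the hitting-probability estimates of Section \ref{subsec:hitting probabilities} with Lemmas \ref{lem:approx local times exponential}--\ref{lem:compute excursion measure} and invoking the independence-from-exit-point result announced in Section \ref{subsec:relation with other works}, I reduce the joint thickness probability to a product of three factors: (i) the cost from $x_0$ to reach $\partial D(x,2r)$; (ii) the cost, conditional on the excursion entering $D(x,r)$ with local-time level $a\log(1/r)$ at scale $r$ (forced by $G_\eps$ to satisfy $a\leq\widetilde\gamma$), of bringing the local time at $x$ from scale $r$ down to scale $\eps$ above the level $\gamma\log(1/\eps)+b$; (iii) the analogous factor at $y$. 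By \eqref{eq:prop local times and Bessel process}, each of (ii) and (iii) is the probability that a zero-dimensional Bessel process starting from at most $\widetilde\gamma\log(1/r)$ reaches $\gamma\log(1/\eps)+O(1)$ at time $\log(r/\eps)$, while remaining under the barrier $\widetilde\gamma s$. A Gaussian-type computation in the spirit of Lemma \ref{lem:Bessel first moment} bounds each such probability by
\begin{equation*}
\frac{C}{\sqrt{\log(r/\eps)}}\exp\!\left(-\tfrac{1}{2}\tfrac{(\gamma\log(1/\eps)-\widetilde\gamma\log(1/r))^2}{\log(r/\eps)}\right),
\end{equation*}
which after expanding the square and multiplying the $x$- and $y$-factors yields
\begin{equation*}
\PROB{x_0}{E_\eps(x,y)} \leq C\,\eps^{\gamma^2}\,r^{-\gamma^2-\delta(\widetilde\gamma-\gamma)}\,\frac{1}{(\log\eps)^2}
\end{equation*}
for some function $\delta$ with $\delta(u)\to 0$ as $u\to 0$.

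\textbf{Conclusion and main obstacle.} Plugging this in, the diagonal contribution is bounded by a constant multiple of $\int_{A^2}|x-y|^{-\gamma^2-\delta(\widetilde\gamma-\gamma)}\,dx\,dy$, which is finite as soon as $\gamma^2+\delta(\widetilde\gamma-\gamma)<2$. Since $\gamma<2$, choosing $\widetilde\gamma$ sufficiently close to $\gamma$ yields \eqref{eq:prop bdd in L2}. The main obstacle is the decoupling step: the local times around the non-concentric circles $\partial D(x,\cdot)$ and $\partial D(y,\cdot)$ are not directly described by the concentric Bessel-process identity \eqref{eq:prop local times and Bessel process}. To effectively disentangle them I expect to condition on the macroscopic number of excursions between $\partial D(x,r)$ and $\partial D(y,r)$, and to use the independence-from-exit-point lemma announced in the introduction: once an excursion has penetrated deeply enough into one of the inner disks, it has forgotten its entry point and its local times behave as those of a zero-dimensional Bessel process. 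This forgetting is what licenses the product structure above, and hence the extraction of the gain $\delta(\widetilde\gamma-\gamma)$ from the truncation that is the whole point of the good event.
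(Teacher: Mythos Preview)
Your outline has a genuine gap, and it is precisely the one you flag as the ``main obstacle'' without resolving. The factorisation into a product (ii)$\times$(iii) of two independent Bessel-type costs is not available here: the local times $L_{x,\eps}(\tau)$ and $L_{y,\eps}(\tau)$ are built from the \emph{same} family of excursions out of $\partial D(x,R)$ (with $R\asymp|x-y|$), so conditioning on $L_{x,R}$ alone does not decouple them. Lemma~\ref{lem:independence local times and exit point} lets one forget the exit point of a single excursion on \emph{concentric} circles; it does not produce independence between two non-concentric towers. Even granting your product bound, it is too weak: squaring your Gaussian estimate gives, with $\widetilde\gamma\approx\gamma$,
\[
\exp\Bigl(-\tfrac{(\gamma\log(1/\eps)-\widetilde\gamma\log(1/r))^2}{\log(r/\eps)}\Bigr)\approx \eps^{\gamma^2}\,r^{-\gamma^2},
\]
so your integrand behaves like $|x-y|^{-\gamma^2-\delta}$. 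This is integrable only for $\gamma<\sqrt2$, i.e.\ the $L^2$-phase, and fails for $\gamma\in[\sqrt2,2)$, which is exactly the range the proposition must cover.

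The paper's proof does \emph{not} use the Bessel description at all for this proposition. It fixes one scale $R\asymp|x-y|$, keeps from $G_\eps(x,\eps_0)$ only the single constraint $L_{x,R}/R\le\widetilde\gamma^2(\log R)^2$, and decomposes the path into excursions between $\partial D(x,R)$ and $\partial D(x,\eps)\cup\partial D(y,\eps)$. Conditioning on the numbers $A_{R\to x},A_{R\to y}$ of such excursions, each of $L_{x,\eps},L_{y,\eps},L_{x,R}$ becomes (approximately) a Gamma variable via Lemmas~\ref{lem:approx local times exponential}--\ref{lem:compute excursion measure}. The crucial gain over a product bound is combinatorial: with $n=A_{R\to x}+A_{R\to y}$, the paper uses
\[
\Prob{\Gamma(n_x{+}1,1)\ge t,\ \Gamma(n_y,1)\ge t}\le e^{-2t}\sum_{k=0}^{n-1}\frac{(2t)^k}{k!},
\]
together with the cap on $n$ coming from $L_{x,R}\le\widetilde\gamma^2 R(\log R)^2$. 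Splitting the sum over $n$ into three ranges and optimising yields an integrand of order $|x-y|^{-\gamma^2/2+o(1)}$ (times polylogarithmic factors), integrable for all $\gamma\in(0,2)$. The point you are missing is that the good event does not merely provide a starting height for two independent descents; it bounds the \emph{total} excursion budget shared by $x$ and $y$, and splitting a bounded budget so that both Gamma sums are large is strictly harder than the product of the two one-point costs.
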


\begin{proposition}
\label{prop:bdd in L2 exponential}
If $\widetilde{\gamma}$ is close enough to $\gamma$, then
\begin{equation}\label{eq:prop bdd in L2 exponential}
\sup_{\eps > 0} \EXPECT{x_0}{ \widetilde{\mu}_\eps^\gamma(A)^2} < \infty.
\end{equation}
\end{proposition}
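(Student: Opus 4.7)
The plan is to bound
\begin{align*}
\EXPECT{x_0}{\widetilde{\mu}_\eps^\gamma(A)^2}
= \abs{\log\eps}\,\eps^{\gamma^2}\int_{A\times A} F_\eps(x,y)\,dx\,dy,
\end{align*}
where $F_\eps(x,y) := \EXPECT{x_0}{e^{\gamma(\sqrt{L_{x,\eps}(\tau)/\eps}+\sqrt{L_{y,\eps}(\tau)/\eps})}\mathbf{1}_{G'_\eps(x,\eps_0,M)\cap G'_\eps(y,\eps_0,M)}}$ (the boundary indicators absorbed into the good event), by showing $F_\eps$ is dominated by an integrable function of $|x-y|$. Pairs with $|x-y|\geq\eps_0$ contribute a bounded total by the strong Markov property applied at the first visit to $\partial D(x,\eps)\cup\partial D(y,\eps)$, the elementary inequality $\sqrt{a+b}\leq\sqrt{a}+\sqrt{b}$ used to separate the contributions to $L_{x,\eps}(\tau)$ from the excursion around $y$, and Proposition \ref{prop:first moment estimates exponential} applied to each piece. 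The diagonal region $|x-y|<\eps_0$ I would decompose dyadically, setting $k:=\lfloor\log(1/|x-y|)\rfloor\in\{\lceil\log(1/\eps_0)\rceil,\ldots,\lfloor\log(1/\eps)\rfloor\}$ and handling each scale separately.

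For a fixed scale $k$, the strategy is to split the Brownian trajectory into a macroscopic part (everything outside $D(x,e^{-k})\cup D(y,e^{-k})$) and two microscopic parts (one inside each disc). The macroscopic part contributes, via Lemmas \ref{lem:hitting probability}, \ref{lem:hitting probability 2 circles} and \ref{lem:compute excursion measure}, a probability of visiting both discs of order $k/\abs{\log\eps}^2$, and it fixes the values $\sqrt{L_{x,e^{-k}}(\tau)/e^{-k}}$ and $\sqrt{L_{y,e^{-k}}(\tau)/e^{-k}}$, which on the good event are both bounded by $\widetilde\gamma k$ and which I can couple to a common value $\alpha k\leq\widetilde\gamma k$ up to negligible corrections. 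Inside $D(x,e^{-k})$, the description \eqref{eq:prop local times and Bessel process} takes over: after conditioning on the macroscopic picture and invoking the lemma stated in Section~3 and proved in Appendix~B (which decouples the inner local times from the exit point of $D(x,e^{-k})$), the process $(\sqrt{L_{x,r}(\tau)/r},\,r=e^{-k}e^{-s})$ is comparable to a zero-dimensional Bessel process started at $\alpha k$ and run up to time $|\log\eps|-k$. The analogous description holds near $y$, the two inner Bessel processes being approximately independent conditionally on $\alpha$.

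Combining these ingredients, $F_\eps(x,y)$ at scale $k$ is at most, up to polynomial corrections in $k$,
\begin{equation*}
\frac{k}{\abs{\log\eps}^2}\int_0^{\widetilde\gamma} k\,e^{-\alpha^2 k/2}\,\EXPECTB{\alpha k}{e^{\gamma R_{|\log\eps|-k}}}^2 d\alpha,
\end{equation*}
where the factor $e^{-\alpha^2 k/2}$ reflects the Bessel density of $\sqrt{L_{x,e^{-k}}(\tau)/e^{-k}}$ near the extreme value $\alpha k$, quantified through Lemma \ref{lem:Bessel tail asymptotic}(1). A sharp form of Lemma \ref{lem:Bessel tail asymptotic}(2)---namely $\EXPECTB{\alpha k}{e^{\gamma R_{|\log\eps|-k}}}\lesssim e^{\gamma\alpha k+\gamma^2(|\log\eps|-k)/2}$, which holds because the drift of a zero-dimensional Bessel process is directed towards zero---reduces the integrand to $\exp(k(2\gamma\alpha-\alpha^2/2-\gamma^2))\,\eps^{-\gamma^2}$. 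Optimising $2\gamma\alpha-\alpha^2/2$ over $\alpha\in[0,\widetilde\gamma]$, attained at $\alpha=\widetilde\gamma$ when $\widetilde\gamma<2\gamma$, multiplying by the volume $\mathrm{Leb}(\{|x-y|\sim e^{-k}\}\cap A^2)\lesssim e^{-2k}$ and by the prefactor $\abs{\log\eps}\eps^{\gamma^2}$, the total contribution of scale $k$ takes the form $\abs{\log\eps}^{-1}\exp(k(2\gamma\widetilde\gamma-\widetilde\gamma^2/2-\gamma^2-2))$ times polynomials in $k$. This is summable in $k$ whenever the exponent is strictly negative, and at $\widetilde\gamma=\gamma$ it equals $\gamma^2/2-2<0$ iff $\gamma<2$. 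By continuity, the bound persists for $\widetilde\gamma$ sufficiently close to $\gamma$, so that the full $L^1$-phase is covered.

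The hardest step will be justifying the quasi-independent Bessel description of the two microscopic pieces. After conditioning on the number and endpoints of the macroscopic excursions of $(B_t)$ between $\partial D(x,e^{-k})\cup\partial D(y,e^{-k})$ and $\partial D$, the inner local times at $x$ are a priori biased by the prescribed exit point from $D(x,e^{-k})$, so they do not directly fit into \eqref{eq:prop local times and Bessel process}. The resolution, foreshadowed in the introduction, is to further condition on whether each macroscopic excursion penetrates deep inside $D(x,e^{-k})$: only these excursions contribute to the small-scale local times, and for them the memory of the entry point is effectively lost by the time the path reaches the small scales. Making this rigorous is the content of the lemma proved in Appendix B, and combining it with careful bookkeeping of the multiplicative errors from Lemmas \ref{lem:compute excursion measure} and \ref{lem:hitting probability 2 circles} across all the inner scales is the main technical work to be carried out in Section \ref{sec:uniform integrability}.
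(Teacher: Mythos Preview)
Your plan takes a genuinely different route from the paper, and in its present form has real gaps.

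The paper does not attack Proposition~\ref{prop:bdd in L2 exponential} via the Bessel description \eqref{eq:prop local times and Bessel process} at all. Instead it reduces directly to Proposition~\ref{prop:bdd in L2}: the cap $\abs{\sqrt{L_{x,\eps}(\tau)/\eps}-\gamma\abs{\log\eps}}\le M\sqrt{\abs{\log\eps}}$ built into $G'_\eps(x,\eps_0,M)$ forces the capped exponential
$\sqrt{\abs{\log\eps}}\,\eps^{\gamma^2/2}\,\Expect{e^{\gamma\sqrt{2\theta\,\Gamma(n+1,1)}}\mathbf{1}_{\mathrm{cap}}}$
to be bounded, after an explicit Gaussian change of variables, by the tail probability
$\abs{\log\eps}\,\eps^{-\gamma^2/2}\,\Prob{\Gamma(n+1,1)\ge \gamma^2\log(1/\eps)/(2\theta)}$
times a factor $(1+O(1/\sqrt{\abs{\log\eps}}))^{n}\exp(O(1)\sqrt{\log(1/\abs{x-y})})$, which is absorbed by the slack already present in the proof of Proposition~\ref{prop:bdd in L2} (this is precisely Remark~\ref{rem:proof bdd in L2}). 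The second-moment engine is therefore the exponential/Gamma approximation of Section~\ref{sec:preliminaries} together with a \emph{single} intermediate circle $\partial D(x,R)$, $R\asymp\abs{x-y}/M$; neither a dyadic decomposition nor Lemma~\ref{lem:independence local times and exit point} is used here.

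On your side, two steps do not hold as written. First, at scale $k\approx\log(1/\abs{x-y})$ the discs $D(x,e^{-k})$ and $D(y,e^{-k})$ overlap, so ``two microscopic parts, one inside each disc'' is not a decomposition, and the probability of visiting $D(x,e^{-k})\cup D(y,e^{-k})$ is of order $1/k$, not $k/\abs{\log\eps}^2$. Second, and more seriously, the ``coupling of $\sqrt{L_{x,e^{-k}}/e^{-k}}$ and $\sqrt{L_{y,e^{-k}}/e^{-k}}$ to a common value $\alpha k$'' is unjustified: these are distinct, correlated random variables, and integrating over a single $\alpha$ with weight $e^{-\alpha^2 k/2}$ amounts to assuming you control their joint law, which is exactly the hard two-point input you are trying to derive. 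If instead you use only the good-event bound $\alpha\le\widetilde\gamma$ and drop the density factor, the exponent in $k$ becomes $2\gamma\widetilde\gamma-\gamma^2-2$, which at $\widetilde\gamma=\gamma$ is $\gamma^2-2$ and is negative only for $\gamma<\sqrt{2}$; you lose the full $L^1$-phase. The paper circumvents this by working asymmetrically with the single circle $\partial D(x,R)$ and counting excursions $A_{R\to x},A_{R\to y}$: the good-event bound on $L_{x,R}$ then caps the \emph{total} number $n=A_{R\to x}+A_{R\to y}$, which simultaneously controls both $L_{x,\eps}$ and $L_{y,\eps}$ through explicit Gamma-variable computations.
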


We start by proving Proposition \ref{prop:bdd in L2}.

\begin{proof}[Proof of Proposition \ref{prop:bdd in L2}]
The proof will be decomposed in three parts. The first part is short and lay the ground work. In particular, it shows that it is enough to control the probability (written in \eqref{eq:proof bdd in L2 THE PROBA}) that the local times are large in two circles and small in an other circle. The second part describes the joint law of the local times in those three circles whereas the third part computes the probability \eqref{eq:proof bdd in L2 THE PROBA} left in the first part.
To shorten the equations, we will denote $L_{x,\eps} := L_{x,\eps}(\tau)$ the local times up to time $\tau$ in this proof.
\medskip

\emph{Step 1.}
Denoting $A_{\eps_0} = \{ x \in A: \abs{x-x_0} > \eps_0 \mathrm{~and~} \mathrm{d}(x, \partial D) > \eps_0 \}$, by definition of $\widetilde{\nu}_\eps^\gamma$ (see \eqref{eq:def nu tilde}), $\EXPECT{x_0}{ \widetilde{\nu}_\eps^\gamma(A \times T)^2}$ is equal to 
\begin{equation*}
\left( \log \frac{1}{\eps} \right)^2 \eps^{-\gamma^2} \int_{A_{\eps_0} \times A_{\eps_0}} dx dy 
~\PROB{x_0}{ \sqrt{\frac{1}{\eps}L_{x,\eps}}, \sqrt{\frac{1}{\eps}L_{y,\eps}} \geq \gamma \log \frac{1}{\eps} + b, G_\eps(x,\eps_0), G_\eps(y,\eps_0) }.
\end{equation*}
Take $a \in (\gamma^2/4,1)$. The contribution of points $x,y$ such that $\abs{x-y} \leq \eps^a$ goes to zero as $\eps \to 0$. Indeed, this contribution is not larger than
\begin{align*}
C \left( \log \frac{1}{\eps} \right)^2 \eps^{-\gamma^2} \eps^{2a} \int_{A_{\eps_0}} dx
~\PROB{x_0}{ \sqrt{\frac{1}{\eps}L_{x,\eps}} \geq \gamma \log \frac{1}{\eps} + b}
=
C \log \frac{1}{\eps} \eps^{-\gamma^2/2 + 2a} \EXPECT{x_0}{\nu_\eps^\gamma(A_{\eps_0})}
\end{align*}
which goes to zero by the first moment estimate \eqref{eq:prop first moment limit} of Proposition \ref{prop:first moment estimates}. We take now $x,y \in A_{\eps_0}$ such that $\abs{x-y} > \eps^a$. By symmetry, it is enough to bound from above
\begin{equation}
\label{eq:proof bdd in L2 1}
\PROB{x_0}{ \sqrt{\frac{1}{\eps}L_{x,\eps}}, \sqrt{\frac{1}{\eps}L_{y,\eps}} \geq \gamma \log \frac{1}{\eps} + b, G_\eps(x,\eps_0), G_\eps(y,\eps_0), \tau_{\partial D(x,\eps)} < \tau_{\partial D(y,\eps)} }.
\end{equation}
Take $M>0$ large and $R \in (e^{-p}, p \geq 0)$ such that
\begin{equation}
\label{eq:proof bdd in L2 choice R}
\frac{\abs{x-y}}{eM} \leq R < \frac{\abs{x-y}}{M}.
\end{equation}
We ensure that $R < \eps_0$ by taking $M$ large enough, but $M$ will play another role later. The probability in \eqref{eq:proof bdd in L2 1} is at most
\begin{equation}
\label{eq:proof bdd in L2 THE PROBA}
\PROB{x_0}{ \sqrt{\frac{1}{\eps}L_{x,\eps}}, \sqrt{\frac{1}{\eps}L_{y,\eps}} \geq \gamma \log \frac{1}{\eps} + b, \sqrt{\frac{1}{R}L_{x,R}} \leq \widetilde{\gamma} \log \frac{1}{R}, \tau_{\partial D(x,\eps)} < \tau_{\partial D(y,\eps)} }.
\end{equation}
The rest of the proof is dedicated to bound from above this probability. For this purpose, the next paragraph describes the joint law of the local times in those three circles.
\medskip

\emph{Step 2.}
We are going to decompose those three local times according to the different excursions between $\partial D(x,R)$, $\partial D(x,\eps)$ and $\partial D(y,\eps)$. Denote by $A_{R \to x}$ (resp. $A_{R \to y}$) the number of excursions from $\partial D(x,R)$ to $\partial D(x,\eps)$ (resp. to $\partial D(y,\eps)$) before $\tau$, and denote by
\begin{enumerate}
\item[--]
$L_{x,\eps}^n$ the local time of $\partial D(x,\eps)$ during the $n$-th excursion from $\partial D(x,\eps)$ to $\partial D(x, R)$,
\item[--]
$L_{y,\eps}^n$ the local time of $\partial D(y,\eps)$ during the $n$-th excursion from $\partial D(y,\eps)$ to $\partial D(x, R) \cup \partial D$,
\item[--]
$L_{x,R}^n$ the local time of $\partial D(x,R)$ during the $n$-th excursion from $\partial D(x,R)$ to $\partial D(x, \eps) \cup \partial D(y,\eps) \cup \partial D$.
\end{enumerate}
For any $x' \in \partial D(x,\eps)$, we have under $\prob_{x'}$ 
\begin{equation}
\label{eq:proof_bddL2}
L_{x,\eps} = \sum_{n=1}^{1+ A_{R \to x}} L_{x,\eps}^n,
~L_{y,\eps} = \sum_{n=1}^{A_{R \to y}} L_{y,\eps}^n
\mathrm{~and~}
L_{x,R} \succ \sum_{n=1}^{A_{R \to x} + A_{R \to y}} L_{x,R}^n.
\end{equation}
The stochastic domination is not exactly an equality because if the last visited circle before $\tau$ is $\partial D(x,R)$ (it could be $\partial D(y,\eps)$), the number of excursions from $\partial D(x,R)$ to $\partial D(x,\eps) \cup \partial D(y,\eps)$ before $\tau$ is $1+A_{R \to x} + A_{R \to y}$ rather than $A_{R \to x} + A_{R \to y}$.
Lemma \ref{lem:approx local times exponential} allows us to approximate (in the precise sense stated therein) the $L_{x,\eps}^n$'s, $L_{y,\eps}^n$'s, $L_{x,R}^n$'s by exponential variables independent of $A_{R \to x}$ and $A_{R \to y}$. We are going to compute the mean of those exponential variables and the transition probabilities between the different three circles.

Let us start with the study of the transition probabilities.
We will denote
\begin{equation}
\label{eq:proof bdd in L2 def pxy}
p_{xy} := \frac{\log 1/ \abs{x-y}}{\log 1 / \eps}.
\end{equation}
Because $\abs{x-y} > \eps^a$, note that $p_{xy}$ is bounded away from 1: $0 < p_{xy} < 1-a.$
We first remark that by \eqref{eq:hitting probabilities rough} we have
\begin{gather*}
\forall z \in \partial D(x,\eps), \PROB{z}{\tau_{\partial D(y, \eps)} < \tau} = p_{xy} + O (1/ \log \eps),\\
\forall z \in \partial D(y,\eps), \PROB{z}{\tau_{\partial D(x, \eps)} < \tau} = p_{xy} + O (1/ \log \eps),\\
\forall z \in \partial D(x,R), \PROB{z}{\tau_{\partial D(x, \eps)} < \tau} = p_{xy} + O (1/ \log \eps), \PROB{z}{\tau_{\partial D(y, \eps)} < \tau} = p_{xy} + O (1/ \log \eps).
\end{gather*}
Here and in the following of the proof, the $O$'s may depend on $\eps_0, M,a$. By Lemma \ref{lem:hitting probability 2 circles} it thus implies that for all $z \in \partial D(x,R)$,
\begin{gather}
\label{eq:proof bdd in L2 transition proba1}
\PROB{z}{\tau_{\partial D(x, \eps)} < \tau \wedge \tau_{\partial D(y, \eps)}} = \frac{p_{xy} + O (1/ \log \eps) - \left( p_{xy} + O (1/ \log \eps) \right)^2}{1 - \left(p_{xy} + O (1/ \log \eps)\right)^2}
= \frac{p_{xy}}{1+p_{xy}} + O \left( \frac{1}{\log \eps} \right) ,\\
\label{eq:proof bdd in L2 transition proba2}
\PROB{z}{\tau_{\partial D(y, \eps)} < \tau \wedge \tau_{\partial D(x, \eps)}} = \frac{p_{xy}}{1+p_{xy}} + O \left( \frac{1}{\log \eps} \right).
\end{gather}
Of course, for any $z \in \partial D(x, \eps)$,
\begin{equation}
\label{eq:proof bdd in L2 transition proba3}
\PROB{z}{\tau_{\partial D(x, R)} < \tau \wedge \tau_{\partial D(y, \eps)}} = 1
\end{equation}
and \eqref{eq:hitting probabilities rough} implies that for all $z \in \partial D(y,\eps)$
\begin{equation}
\label{eq:proof bdd in L2 transition proba4}
\PROB{z}{\tau_{\partial D(x, R)} < \tau \wedge \tau_{\partial D(x, \eps)}} = \PROB{z}{\tau_{\partial D(x, R)} < \tau} = 1 - O \left( \frac{1}{\log \abs{x-y}} \right).
\end{equation}
To summarise, despite the apparent asymmetry between $x$ and $y$, the circle $\partial D(x,R)$ plays a similar role for $\partial D(x,\eps)$ and $\partial D(y,\eps)$ and the transition probabilities between those three circles are given by \eqref{eq:proof bdd in L2 transition proba1}, \eqref{eq:proof bdd in L2 transition proba2}, \eqref{eq:proof bdd in L2 transition proba3} and \eqref{eq:proof bdd in L2 transition proba4}.

We now move on to the study of the $L_{x,\eps}^n$'s, $L_{y,\eps}^n$'s, $L_{x,R}^n$'s.
Starting from any point of $\partial D(x, \eps)$, $L_{x,\eps}(\tau_{\partial D(x,R)})$ is an exponential variable with mean given by (see \eqref{eq:Green estimate circle} in Lemma \ref{lem:Green function estimate})
\[
2 \eps \log (R/\eps) = 2 (1-p_{xy}) \eps \log \frac{1}{\eps} \left( 1 + O \left( \frac{1}{\log \eps} \right) \right).
\]
Starting from any point of $\partial D(y,\eps)$, Lemma \ref{lem:approx local times exponential} allows us to approximate $L_{y,\eps}(\tau \wedge \tau_{\partial D(x,R)} )$ by an exponential variable with mean equal to (see Lemma \ref{lem:compute excursion measure} applied with $A \leftarrow \partial D(x,R)$ and see \eqref{eq:Green estimate domain} in Lemma \ref{lem:Green function estimate})
\[
\left( 1 + O \left( \tfrac{\eps}{R} \right) \right) \left( 1 - p_{xy} + O \left( \tfrac{1}{\log \eps} \right)  \right) 2 \eps \left( \log \tfrac{1}{\eps} + O(1) \right)
=
2 (1-p_{xy}) \eps \log \tfrac{1}{\eps} \left( 1 + O \left( \tfrac{1}{\log \eps} \right) \right).
\]
Similarly, starting from any point of $\partial D(x,R)$, we can approximate $L_{x,R}(\tau \wedge \tau_{\partial D(x,\eps)} \wedge \tau_{\partial D(x,\eps)})$ by an exponential variable with mean equal to (we apply Lemma \ref{lem:compute excursion measure} with $A \leftarrow \partial D(y,\eps), \eps \leftarrow R, \delta \leftarrow \eps$)
\[
\left( 1 \pm C \tfrac{R}{\abs{x-y}} \right) 
\left( 1 - 2 \tfrac{p_{xy}}{1+p_{xy}} + O \left( \tfrac{1}{\log \abs{x-y}} \right) \right) 2 R \left( \log \tfrac{1}{R} + O(1) \right) = \left( 1 \pm \tfrac{C_1}{M} \right) \tfrac{1-p_{xy}}{1+p_{xy}} 2R \log \tfrac{1}{R}
\]
for some universal constants $C,C_1$. In the following we will denote $\hat{\gamma} = \widetilde{\gamma} / \sqrt{1-C_1/M}$. As we can take $M$ as large as we want, we will be able to require $\hat{\gamma}$ to be as close to $\gamma$ as we want.

Finally, to use Lemma \ref{lem:approx local times exponential} to approximate either $L_{x,\eps}^n$, $L_{y,\eps}^n$ or $L_{x,R}^n$ by exponential variables independent of the exit point, we need to control the error we make in estimating the harmonic measure (what was written $u$ in Lemma \ref{lem:approx local times exponential}). For this, we use \eqref{eq:lem harmonic measure} of Lemma \ref{lem:compute excursion measure} which tells us that the hypothesis of Lemma \ref{lem:approx local times exponential} (used in our three above cases) is satisfied with $u = \Cc/M$ for some $\Cc>0$.
\medskip

\emph{Step 3.} We are now ready to start to compute the probability \eqref{eq:proof bdd in L2 THE PROBA}. We will denote $\Gamma(n,1), \Gamma(n',1)$ independent Gamma variables with shape parameter $n,n'$ and scale parameter 1. We recall the following elementary fact: for any $n,n' \geq 1$ and $t \geq 0$,
\begin{align*}
\Prob{ \Gamma(n,1), \Gamma(n',1) \geq t }
& = e^{-2t} \left( \sum_{i=0}^{n-1} \frac{t^i}{i!} \right) \left( \sum_{j=0}^{n'-1} \frac{t^j}{j!} \right)
= e^{-2t} \sum_{k=0}^{n+n'-2} t^k \sum_{\substack{0 \leq i \leq n-1 \\ 0 \leq j \leq n'-1 \\ i+j = k}} \frac{1}{i!j!} \\
& \leq e^{-2t} \sum_{k=0}^{n+n'-2} t^k \sum_{\substack{i,j \geq 0 \\ i+j = k}} \frac{1}{i!j!}
= e^{-2t} \sum_{k=0}^{n+n'-2} \frac{(2t)^k}{k!}.
\end{align*}
By \eqref{eq:proof_bddL2}, we have:
\begin{align*}
& \PROB{x_0}{ \sqrt{\frac{1}{\eps}L_{x,\eps}}, \sqrt{\frac{1}{\eps}L_{y,\eps}} \geq \gamma \log \frac{1}{\eps} + b, \sqrt{\frac{1}{R}L_{x,R}} \leq \widetilde{\gamma} \log \frac{1}{R}, \tau_{\partial D(x,\eps)} < \tau_{\partial D(y,\eps)} }
\\
& \leq \PROB{x_0}{\tau_{\partial D(x,\eps)} < \tau \wedge \tau_{\partial D(y,\eps)}} \sup_{x' \in \partial D(x,\eps)}
\sum_{\substack{n_x \geq 0 \\ n_{y \geq 1}}} \PROB{x'}{A_{R \to x} = n_x, A_{R \to y} = n_y} \left( 1 + \tfrac{\Cc}{M} \right)^{1+2 n_x + 2 n_y}
\\
& ~~~\times \Prob{ \Gamma(n_x + n_y, 1) \leq \frac{\hat{\gamma}^2}{2} \frac{1+p_{xy}}{1-p_{xy}} \log \frac{1}{R},
\Gamma(n_x + 1, 1), \Gamma(n_y, 1) \geq \frac{\gamma^2}{2} \frac{\log 1/ \eps}{1 - p_{xy}} \left( 1 + O \left( \tfrac{1}{\log \eps} \right) \right) }.
\end{align*}
The term $\left( 1 + \Cc/M \right)^{1+2 n_x + 2 n_y}$ in the above inequality comes from the fact that every time we approximate one of $L_{x,\eps}^n$, $L_{y,\eps}^n$, $L_{x,R}^n$ by an exponential variable independent of the last point of the excursion, we have to pay the multiplicative price $\left( 1 + \Cc/M \right)$. See Lemma \ref{lem:approx local times exponential}.
Now,
\begin{align}
& \PROB{x_0}{ \sqrt{\frac{1}{\eps}L_{x,\eps}}, \sqrt{\frac{1}{\eps}L_{y,\eps}} \geq \gamma \log \frac{1}{\eps} + b, \sqrt{\frac{1}{R}L_{x,R}} \leq \widetilde{\gamma} \log \frac{1}{R}, \tau_{\partial D(x,\eps)} < \tau_{\partial D(y,\eps)} }
\nonumber
\\
& \leq \frac{O(1)}{\log \eps} e^{-\frac{\gamma^2}{1-p_{xy}} \log \frac{1}{\eps}} \sum_{n=1}^\infty \sup_{x' \in \partial D(x,\eps)} \PROB{x'}{A_{R \to x} + A_{R \to y} = n} \left( 1 + \tfrac{\Cc}{M} \right)^{1+2 n}
\nonumber \\
& ~~~\times \Prob{ \Gamma(n, 1) \leq \frac{\hat{\gamma}^2}{2} \frac{1+p_{xy}}{1-p_{xy}} \log \frac{1}{R} }
\sum_{k=0}^{n-1} \frac{1}{k!} \left\{ \gamma^2 \frac{\log 1/ \eps}{1 - p_{xy}} \left( 1 + O \left( \tfrac{1}{\log \eps} \right) \right) \right\}^k
\nonumber \\
& \leq O(1) \frac{p_{xy}}{\log \eps} e^{-\frac{\gamma^2}{1-p_{xy}} \log \frac{1}{\eps}} \sum_{n=1}^\infty \left( \tfrac{2p_{xy}}{1+p_{xy}} + O \left( \tfrac{1}{\log \eps} \right) \right)^{n-1}  \left( 1 + \alpha \right)^{n-1}
\nonumber \\
& ~~~ \times \Prob{ \Gamma(n, 1) \leq \frac{\hat{\gamma}^2}{2} \frac{1+p_{xy}}{1-p_{xy}} \log \frac{1}{\abs{x-y}} + C_2 }
\sum_{k=0}^{n-1} \frac{1}{k!} \left\{ \gamma^2 \frac{\log 1/ \eps}{1 - p_{xy}} + C_3 \right\}^k. \label{eq:proof bdd in L2 3}
\end{align}
Here $\alpha>0$ is of order $1/M$ and can be required to be as small as necessary.
We are going to bound from above the last sum indexed by $n$. We decompose it in three parts that we will denote $S_1, S_2$ and $S_3$ respectively: by denoting
\[
n_1 := \frac{\hat{\gamma}^2}{2} \frac{1+p_{xy}}{1-p_{xy}} \log \frac{1}{\abs{x-y}} + C_2
\mathrm{~and~}
n_2 := \gamma^2 \frac{\log 1/ \eps}{1 - p_{xy}} + C_3,
\]
$S_1$ is the sum over $n = 1 \dots n_1$, $S_2$ corresponds to $n = n_1 + 1 \dots n_2$ and $S_3$ is the remaining $n \geq n_2 +1$.
Let us comment that if $\hat{\gamma}$ is close enough to $\gamma$, we have $n_1 < n_2$ because $(1+p_{xy}) \hat{\gamma}^2 / 2 \leq (1+a) \hat{\gamma}^2 / 2 < \gamma^2.$ In the sum $S_1$, it will be difficult for $L_{x,\eps}(\tau)$ and $L_{y,\eps}(\tau)$ to be large at the same time. In the sum $S_2$ it will be difficult for all the three events to happen and in the sum $S_3$, it will be unlikely for $L_{x,R}(\tau)$ to be small.

Later in the proof, we will use the two following elementary inequalities that we record here for ease of reference:
for all $n \geq 1$ and $\mu \geq 0$, we have:
\begin{align}
\label{eq:upper bound exponential rest}
\mathrm{if~} \mu \leq 1, \sum_{k=n}^\infty \frac{(\mu n)^k}{k!} \leq (\mu e)^n, \\
\label{eq:upper bound exponential sum}
\mathrm{if~} \mu \geq 1, \sum_{k=0}^{n-1} \frac{(\mu n)^k}{k!} \leq e (\mu e)^{n-1}.
\end{align}

$\bullet~ S_1:$ We bound from above the probability appearing in the sum by $1$ and we exchange the order of the summations: we first sum over $k = 0 \dots n_1-1$ and then we sum over $n \geq k+1$. The sum over $n$ being a geometric sum, it is explicit and it leads to
\[
S_1 \leq O(1) \sum_{0 \leq k \leq n_1 -1} \frac{1}{k!} \left( 2 \left( 1 + \alpha \right) \gamma^2 \frac{p_{xy}}{1-p_{xy}^2} \log \frac{1}{\eps} + C'_3 \right)^k.
\]
We now use \eqref{eq:upper bound exponential sum} with
\begin{align*}
\mu & = \left( 2 \left( 1 + \alpha \right) \gamma^2 \frac{p_{xy}}{1-p_{xy}^2} \log \frac{1}{\eps} + C'_3 \right) \Big/ n_1
= 4 \left( 1 + \alpha \right) \frac{\gamma^2}{\hat{\gamma}^2} \frac{1}{(1+p_{xy})^2} \left( 1 + O \left( \tfrac{1}{\log \abs{x-y}} \right) \right) \\
& \geq \left( \frac{2 \gamma}{(1+a) \hat{\gamma}} \right)^2 > 1
\end{align*}
if $\hat{\gamma}$ is close enough to $\gamma$. It gives
\begin{align*}
S_1 & \leq O(1) \left( 4 \left( 1 + \alpha \right) \frac{\gamma^2}{\hat{\gamma}^2} \frac{1}{(1+p_{xy})^2} \left( 1 + O \left( \tfrac{1}{\log \abs{x-y}} \right) \right) e \right)^{n_1} \\
& =
O(1) \exp \left\{ \frac{\hat{\gamma}^2}{2} \frac{1+p_{xy}}{1-p_{xy}} \log \frac{1}{\abs{x-y}} \left( 1 + 2 \log \frac{2 \sqrt{1+\alpha} \gamma}{(1+p_{xy}) \hat{\gamma}} \right) \right\}.
\end{align*}

$\bullet~ S_2:$ For $n \geq n_1 + 1$, we have (see \eqref{eq:upper bound exponential rest})
\begin{equation}
\label{eq:proof bdd in L2 2}
\Prob{\Gamma(n,1) \leq \frac{\hat{\gamma}^2}{2} \frac{1+p_{xy}}{1-p_{xy}} \log \frac{1}{\abs{x-y}} + C_2} \leq \left( \frac{\hat{\gamma}^2}{2} \frac{1+p_{xy}}{1-p_{xy}} \log \frac{O(1)}{\abs{x-y}} \frac{e}{n} \right)^{n-1} e^{- \frac{\hat{\gamma}^2}{2} \frac{1+p_{xy}}{1-p_{xy}} \log \frac{O(1)}{\abs{x-y}}}
\end{equation}
and we also have for $n \leq n_2$ (see \eqref{eq:upper bound exponential sum})
\[
\sum_{k=0}^{n-1} \left\{ \gamma^2 \frac{\log 1/ \eps}{1 - p_{xy}} + C_3 \right\}^k \leq e \left( \gamma^2 \frac{\log O(1)/ \eps}{1 - p_{xy}} \frac{e}{n} \right)^{n-1}.
\]
Recalling that $p_{xy} = \log \abs{x-y} / \log \eps$, these two inequalities show that $S_2$ is at most
\begin{align*}
& O(1) e^{- \frac{\hat{\gamma}^2}{2} \frac{1+p_{xy}}{1-p_{xy}} \log \frac{1}{\abs{x-y}}} \sum_{n=n_1+1}^{n_2} \left( (1+\alpha) \tfrac{\gamma^2 \hat{\gamma}^2}{(1-p_{xy})^2} \log \left( \tfrac{O(1)}{ \abs{x-y}} \right) \left( p_{xy} \log \tfrac{1}{\eps} + O(1) \right) \right)^{n-1} \left( \tfrac{e}{n} \right)^{2(n-1)} \\
& \leq O(1) e^{- \frac{\hat{\gamma}^2}{2} \frac{1+p_{xy}}{1-p_{xy}} \log \frac{1}{\abs{x-y}}} \sum_{n=1}^\infty \left( (1+\alpha) \frac{\gamma^2 \hat{\gamma}^2}{(1-p_{xy})^2} \left( \log \frac{O(1)}{ \abs{x-y}} \right)^2 \right)^{n-1} \left( \frac{e}{n} \right)^{2(n-1)}.
\end{align*}
By Stirling's formula, there exists $C > 0$, such that for all $n \geq 2$, $(e/n)^{2(n-1)} \leq C / ((n-1)!(n-2)!)$. Also, denoting $I_1$ the modified Bessel function of the first kind (see \eqref{eq:def Bessel I_1}) and using its asymptotic form \eqref{eq:I_1 asymptotic}, we notice that
\[
\sum_{n=2}^\infty \frac{1}{(n-1)!(n-2)!} v^n = 2 v^{5/2} I_1 (2 \sqrt{v}) \leq C v^{9/4} e^{2 \sqrt{v}}.
\]
Hence
\[
S_2 \leq O(1) \left( \log \frac{1}{\abs{x-y}} \right)^{9/2} \exp \left\{ \left( - \frac{\hat{\gamma}^2}{2} \frac{1+p_{xy}}{1-p_{xy}} + 2 \sqrt{1+\alpha} \frac{\gamma \hat{\gamma}}{1-p_{xy}} \right) \log \frac{1}{\abs{x-y}} \right\}.
\]

$\bullet~ S_3:$ We again use \eqref{eq:proof bdd in L2 2} and we simply bound
\[
\sum_{k=0}^{n-1} \frac{1}{k!} \left\{ \gamma^2 \frac{\log 1/ \eps}{1 - p_{xy}} + C_3 \right\}^k \leq O(1) e^{\gamma^2 \frac{\log 1/ \eps}{1 - p_{xy}}}
\]
to obtain
\[
S_3 \leq O(1) \exp \left\{ \gamma^2 \frac{\log 1/ \eps}{1 - p_{xy}} - \frac{\hat{\gamma}^2}{2} \frac{1+p_{xy}}{1-p_{xy}} \log \tfrac{1}{\abs{x-y}} \right\} \sum_{n \geq n_2+1} \left( (1+\alpha) \hat{\gamma}^2 \frac{p_{xy}}{1-p_{xy}} \log \tfrac{O(1)}{\abs{x-y}} \frac{e}{n} \right)^{n-1}.
\]
Again by Stirling's formula, $(e/n)^{n-1} \leq C \sqrt{n} / (n-1)!$ and with an inequality of the kind of \eqref{eq:upper bound exponential rest} we have
\begin{align*}
S_3 & \leq O(1) \exp \left\{ \gamma^2 \tfrac{\log 1/ \eps}{1 - p_{xy}} - \frac{\hat{\gamma}^2}{2} \tfrac{1+p_{xy}}{1-p_{xy}} \log \tfrac{1}{\abs{x-y}} \right\} \left( (1+\alpha) \tfrac{\hat{\gamma}^2}{\gamma^2} p_{xy} \left( p_{xy} + O \left( \tfrac{1}{\log \eps} \right) \right) e \right)^{\frac{\gamma^2}{1-p_{xy}} \log \frac{1}{\eps} } \\
& = O(1) \exp \left\{ \left( - \frac{\hat{\gamma}^2}{2} \frac{1+p_{xy}}{1-p_{xy}} + 2 \frac{\gamma^2}{1-p_{xy}} \right) \log \frac{1}{\abs{x-y}} \right\} \\
& \times \exp \left\{  \frac{\gamma^2}{p_{xy}} \left( 2 + \frac{1}{1-p_{xy}} \log  \left( (1+\alpha) \frac{\hat{\gamma}^2}{\gamma^2} p_{xy} \left( p_{xy} + O \left( \tfrac{1}{\log \eps} \right) \right) \right) \right) \log \frac{1}{\abs{x-y}} \right\}.
\end{align*}
But $\sup_{0 < p < 1-a} 1 + (\log p)/(1-p) < 0 $. Hence if $\hat{\gamma}$ is close enough to $\gamma$, $\alpha$ close enough to $0$ and if $\eps$ is small enough
\[
2 + \frac{1}{1-p_{xy}} \log  \left( (1+\alpha) \frac{\hat{\gamma}^2}{\gamma^2} p_{xy} \left( p_{xy} + O \left( \tfrac{1}{\log \eps} \right) \right) \right) < 0
\]
which implies that
\[
S_3 \leq O(1) \exp \left\{ \left( - \frac{\hat{\gamma}^2}{2} \frac{1+p_{xy}}{1-p_{xy}} + 2 \frac{\gamma^2}{1-p_{xy}} \right) \log \frac{1}{\abs{x-y}} \right\}.
\]

Finally, the worst upper bound we have is for $S_2$ and coming back to \eqref{eq:proof bdd in L2 3} we have obtained
\begin{align}
\label{eq:last_eq_24}
& \PROB{x_0}{ \sqrt{\frac{1}{\eps}L_{x,\eps}}, \sqrt{\frac{1}{\eps}L_{y,\eps}} \geq \sqrt{\frac{g}{2}}\gamma \log \frac{1}{\eps} + b, \sqrt{\frac{1}{R}L_{x,R}} \leq \sqrt{\frac{g}{2}} \widetilde{\gamma} \log \frac{1}{R}, \tau_{\partial D(x,\eps)} < \tau_{\partial D(y,\eps)} } \\
& \leq O(1) \frac{1}{(\log \eps)^2} \eps^{\gamma^2} \left( \log \frac{1}{\abs{x-y}} \right)^{11/2}
\exp \left\{ \frac{2 \sqrt{1+\alpha} \gamma \hat{\gamma} - \gamma^2 - \hat{\gamma}^2(1+p_{xy})/2}{1-p_{xy}} \log \frac{1}{\abs{x-y}} \right\}. \nonumber
\end{align}
We can ensure that the coefficient
\[
\frac{2\sqrt{1+\alpha}\gamma \hat{\gamma} - \gamma^2 - \hat{\gamma}^2(1+p_{xy})/2}{1-p_{xy}}
\]
is as close to $\gamma^2/2$ as we want. In particular, it is smaller than $2$ and we have shown that $(\log \eps)^2 \eps^{-\gamma^2/2}$ times 
\[
\PROB{x_0}{ \sqrt{\frac{1}{\eps}L_{x,\eps}}, \sqrt{\frac{1}{\eps}L_{y,\eps}} \geq \gamma \log \frac{1}{\eps} + b, \sqrt{\frac{1}{R}L_{x,R}} \leq \widetilde{\gamma} \log \frac{1}{R}, \tau_{\partial D(x,\eps)} < \tau_{\partial D(y,\eps)} }
\]
is bounded from above by a quantity independent of $\eps$ and integrable. It concludes the proof.
\end{proof}

\begin{remark}\label{rem:proof bdd in L2}
We now do a small remark that will be useful in the proof of Proposition \ref{prop:bdd in L2 exponential}.
If in the inequality \eqref{eq:proof bdd in L2 3} we had a worse estimate with an extra multiplicative error $(1 + O( 1 / \sqrt{\log (1/\eps)} ))^n$ in the sum indexed by $n$, we could have absorbed this error by increasing slightly the value of $\alpha$ and it would not have changed the final result: we would have still obtained an upper bound which is integrable over $x,y$.
\end{remark}

We now prove Proposition \ref{prop:bdd in L2 exponential}.
We are going to see that this is an easy consequence of the proof of Proposition \ref{prop:bdd in L2} and we will use the notations defined therein.

\begin{proof}[Proof of Proposition \ref{prop:bdd in L2 exponential}]
By definition of $\widetilde{\mu}_\eps^\gamma$ (see \eqref{eq:def mu tilde}), $\EXPECT{x_0}{ \widetilde{\mu}_\eps^\gamma(A)^2}$ is equal to
\begin{align*}
\log \left( \frac{1}{\eps} \right) \eps^{\gamma^2} \int_{A_{\eps_0} \times A_{\eps_0}}
\EXPECT{x_0}{e^{\gamma \sqrt{\frac{1}{\eps} L_{x,\eps}} } \mathbf{1}_{G'_\eps(x,\eps_0,M)} e^{\gamma \sqrt{\frac{1}{\eps} L_{y,\eps}} } \mathbf{1}_{G'_\eps(y,\eps_0,M)} }  dx dy.
\end{align*}
As before, if $a \in (\gamma^2/4,1)$, the contribution of points $x,y$ such that $\abs{x-y} \leq \eps^a$ is negligible: it is at most
\begin{align*}
& \log \left( \frac{1}{\eps} \right) \eps^{\gamma^2} \eps^{2a} \exp \left( \gamma^2 \log \frac{1}{\eps} + \sqrt{\frac{2}{g}} M \gamma \sqrt{\log \frac{1}{\eps}} \right) \int_A \EXPECT{x_0}{ e^{\gamma \sqrt{\frac{2}{g \eps} L_{x,\eps}} } } dx
\\
& = \eps^{-\gamma^2/2+2a-o(1)} \EXPECT{x_0}{\nu_\eps^\gamma(A)}
\end{align*}
which converges to zero thanks to the first moment estimate \eqref{eq:prop first moment exponential limit} of Proposition \ref{prop:first moment estimates exponential}. For $x,y \in A_{\eps_0}$ with $\abs{x-y} \geq \eps^a$, we proceed in the exact same way as before. In particular, we have the same description of the joint law of $(L_{x,\eps},L_{x,R},L_{y,\eps})$: starting from any point of $\partial D(x,\eps)$ and conditioning on the event that the number of excursions from $\partial D(x,R)$ to $\partial D(x,\eps)$ is $n$, we can approximate $L_{x,\eps}(\tau)/\eps$ by a Gamma random variable $\Gamma(n+1,2 \theta)$ which is the sum of $n+1$ independent exponential variables with mean $2 \theta$. Here
\[
\theta = \log R + O(1) = (1-p_{xy}) \log \frac{1}{\eps} + O(1).
\]
The only difference with the case treated in Proposition \ref{prop:bdd in L2} is that we consider
\begin{equation}
\label{eq:proof bdd in L2 exponential}
\sqrt{\log \left( \frac{1}{\eps} \right)} \eps^{\gamma^2/2} \Expect{e^{\gamma \sqrt{2\theta \Gamma(n+1,1)}} \indic{ \abs{ \sqrt{2 \theta \Gamma(n+1,1)} - \gamma \log (1/\eps)} \leq M \sqrt{\log (1/\eps)}} }
\end{equation}
rather than
\begin{equation}
\label{eq:proof bdd in L2 exponential2}
\log \frac{1}{\eps} \eps^{-\gamma^2/2} \Prob{ \Gamma(n+1,1) \geq \frac{\gamma^2}{2\theta} \log \frac{1}{\eps} }.
\end{equation}
We are actually going to see that the first quantity can be bounded from above by second one, up to an irrelevant factor. This will allow us to conclude the proof thanks to Proposition \ref{prop:first moment estimates}. With the change of variable $u = \sqrt{t} - \gamma \sqrt{\theta/2}$, we have
\begin{align*}
& \Expect{e^{\gamma \sqrt{2\theta \Gamma(n+1,1)}} \indic{ \abs{ \sqrt{2 \theta \Gamma(n+1,1)} - \gamma \log (1/\eps)} \leq M \sqrt{\log (1/\eps)}} } \\
& = \int_0^\infty e^{\gamma \sqrt{2 \theta t} - t} \frac{t^n}{n!} \indic{ \abs{ \sqrt{t} - \gamma \log (1/\eps)/ \sqrt{2\theta}} \leq M \sqrt{\log (1/\eps)} / \sqrt{2 \theta}} dt \\
& = \frac{(\gamma \sqrt{\theta})^{2n+1}}{n! 2^n} e^{\gamma^2 \theta/2} \int_{\R} e^{-u^2/2} \left( 1 + \frac{\sqrt{2}u}{\gamma \sqrt{\theta}} \right)^{2n+1} \indic{ \abs{u + \gamma \sqrt{\theta/2} -\gamma \log (1/\eps)/ \sqrt{2\theta}} \leq M \sqrt{\log (1/\eps)} / \sqrt{2 \theta}} dt.
\end{align*}
In the range of admissible $u$, we have
\[
1 + \frac{\sqrt{2}u}{\gamma \sqrt{\theta}} = \frac{1}{1-p_{xy}} + O \left( \frac{1}{\sqrt{\log (1/\eps)}} \right)
\]
and we also have
\[
u^2 = \frac{\gamma^2}{2} \frac{p_{xy}}{1-p_{xy}} \log \frac{1}{\abs{x-y}} + O(1) \sqrt{\log \frac{1}{\abs{x-y}}}.
\]
Hence 
\begin{align*}
& \Expect{e^{\gamma \sqrt{2\theta \Gamma(n+1,1)}} \indic{ \abs{ \sqrt{2 \theta \Gamma(n+1,1)} - \gamma \log (1/\eps)} \leq M \sqrt{\log (1/\eps)}} } \\
& = \left( 1 + O \left( \tfrac{1}{\sqrt{\log (1/\eps)}} \right) \right)^n \tfrac{\sqrt{\theta}}{n!} \left( \tfrac{\gamma^2 \theta}{2(1-p_{xy})^2} \right)^n e^{\gamma^2 \theta /2} \exp \left( -\tfrac{\gamma^2p_{xy}}{2(1-p_{xy})} \log \tfrac{1}{\abs{x-y}} + O(1) \sqrt{\log \tfrac{1}{\abs{x-y}}} \right)
\end{align*}
which then implies that the term in \eqref{eq:proof bdd in L2 exponential} is at most
\[
\left( 1 + O \left( \tfrac{1}{\sqrt{\log (1/\eps)}} \right) \right)^n \tfrac{1}{n!}
\log \tfrac{1}{\eps} \exp \left( -\tfrac{\gamma^2}{2(1-p_{xy})} \log \tfrac{1}{\abs{x-y}} + O(1) \sqrt{\log \tfrac{1}{\abs{x-y}}} \right) \left( \tfrac{\gamma^2}{2(1-p_{xy})} \log \tfrac{1}{\eps} \right)^n.
\]
Recalling that the term in \eqref{eq:proof bdd in L2 exponential2} is equal to
\[
\log \tfrac{1}{\eps} \exp \left( -\tfrac{\gamma^2}{2(1-p_{xy})} \log \tfrac{1}{\abs{x-y}} + O(1) \right) \sum_{k=0}^n \frac{1}{k!} \left( \tfrac{\gamma^2}{2(1-p_{xy})} \log \tfrac{1}{\eps} + O(1) \right)^n,
\]
it shows that the term in \eqref{eq:proof bdd in L2 exponential} is at most $(1+O(1/\sqrt{\log(1/\eps)})^n \exp \left( O(1) \sqrt{ \log(1/\abs{x-y})} \right) $ times the term in \eqref{eq:proof bdd in L2 exponential2}. As we mentioned in Remark \ref{rem:proof bdd in L2}, it implies that we obtain the same upper bound as in the proof of Proposition \ref{prop:bdd in L2} with an extra multiplicative error $\exp \left( O(1) \sqrt{ \log(1/\abs{x-y})} \right) $ which is still integrable over $x,y$. It concludes the proof.
\end{proof}

\section{Convergence}\label{sec:Cauchy in L2}

In this section, we will prove the following proposition:

\begin{proposition}\label{prop:Cauchy}
If $\widetilde{\gamma}$ is close enough to $\gamma$, $(\tilde{\nu}_\eps^\gamma(A \times T), \eps >0)$ is a Cauchy sequence in $L^2$ and moreover,
\begin{equation}\label{eq:prop Cauchy decomposition product}
\lim_{\eps \to 0} \EXPECT{x_0}{ \left( \widetilde{\nu}_\eps^\gamma (A \times (b,\infty)) - e^{- \gamma b} \widetilde{\nu}_\eps^\gamma (A \times (0,\infty)) \right)^2 } = 0
\end{equation}
and
\begin{equation}\label{eq:prop Cauchy exponential}
\limsup_{\eps \to 0} \EXPECT{x_0}{ \left( \frac{1}{\sqrt{2 \pi} \gamma} \widetilde{\mu}_\eps^\gamma (A) - \widetilde{\nu}_\eps^\gamma (A \times (0,\infty)) \right)^2 } \leq p(M)
\end{equation}
with $p(M) \to 0$ as $M \to \infty$. $p(M)$ may depend on $\gamma$.
\end{proposition}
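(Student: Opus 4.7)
All three claims reduce to controlling a single two-point moment
$$M_{\eps,\eps'}(T,T') := \EXPECT{x_0}{\widetilde\nu_\eps^\gamma(A\times T)\widetilde\nu_{\eps'}^\gamma(A\times T')}.$$
The goal is to show that as $\eps,\eps'\to 0$ (in any order), this converges to a limit of the form $c(T)\,c(T')\,I(A)$, where $c(T):=\gamma\int_T e^{-\gamma t}\,dt$ and $I(A)$ is an explicit double integral involving Green-type factors at $x$ and $y$. Given such a factorised limit, the Cauchy property for $\widetilde\nu_\eps^\gamma(A\times T)$ is immediate from $\Expect{(X-Y)^2} = \Expect{X^2} - 2\Expect{XY} + \Expect{Y^2}$, while \eqref{eq:prop Cauchy decomposition product} follows from the identity $c((b,\infty)) = e^{-\gamma b}c((0,\infty))$ combined with the same expansion. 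For \eqref{eq:prop Cauchy exponential} I would introduce the analogous two-point object for $\widetilde\mu_\eps^\gamma$ and check that its limit matches $\sqrt{2\pi}\gamma$ times that of $\widetilde\nu_\eps^\gamma(A\times(0,\infty))$ up to an $O(p(M))$ error controlled by the truncation in $G'_\eps(x,\eps_0,M)$.

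\textbf{Cross-moment computation.} The geometric skeleton is identical to the proof of Proposition~\ref{prop:bdd in L2}. The contribution of pairs $(x,y)$ with $\abs{x-y}\le(\eps\vee\eps')^a$ is negligible by the first-moment bound \eqref{eq:prop first moment limit}. For the remainder, fix a dyadic scale $R$ with $\abs{x-y}/(eM)\le R<\abs{x-y}/M$ and decompose the Brownian path into excursions among $\partial D(x,R)$, $\partial D(x,\eps)$ and $\partial D(y,\eps')$. The transition-probability estimates and expected local-time computations from Step~2 of the proof of Proposition~\ref{prop:bdd in L2} apply without change. By \eqref{eq:prop local times and Bessel process}, conditionally on $L_{x,R}(\tau)$ and $L_{y,R}(\tau)$, the microscopic local-time profiles $\sqrt{L_{x,r}(\tau)/r}$ for $r\le R$ at $x$ and at $y$ are approximated by two independent zero-dimensional Bessel processes started from $\sqrt{L_{x,R}/R}$ and $\sqrt{L_{y,R}/R}$, run for times $\log(R/\eps)$ and $\log(R/\eps')$ respectively. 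The good event $G_\eps$ keeps these starting values bounded, so that up to vanishing error the initial point of each Bessel process does not matter in the limit.

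\textbf{Bessel asymptotic and identification of the limit.} The analytic ingredient needed beyond Lemmas~\ref{lem:Bessel first moment}--\ref{lem:Bessel tail asymptotic} is a precise tail asymptotic of the form
$$\abs{\log \eps}\,\eps^{-\gamma^2/2}\,\BROB{r}{R_t \ge \gamma t + b} \longrightarrow C_\gamma(r)\, e^{-\gamma b}\qquad \text{as } t=\log(R/\eps)\to\infty,$$
uniformly in $r$ on bounded subsets of $(0,\infty)$. I would extract and prove this as an additional Bessel lemma in the Appendix via a Girsanov change of measure for the Bessel SDE, reducing the statement to a Gaussian tail estimate with an exponential correction producing the factor $e^{-\gamma b}$ (equivalently, an exponential overshoot for $R_t-\gamma t$ conditional on being positive). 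Inserting this asymptotic twice (at $x$ with horizon $\log(R/\eps)$, at $y$ with horizon $\log(R/\eps')$) into the excursion decomposition yields the factorised limit $c(T)c(T')I(A)$; the spatial factor $I(A)$ is obtained by summing the resulting geometric series in excursion counts exactly as in the derivation of \eqref{eq:last_eq_24}, but now as an equality rather than an upper bound. For \eqref{eq:prop Cauchy exponential} I repeat the analysis with $\widetilde\mu$, converting $e^{\gamma\sqrt{L_{x,\eps}/\eps}}$ into a thick-event indicator via the Gaussian substitution of Proposition~\ref{prop:first moment estimates exponential}: this produces the $\sqrt{2\pi}\gamma$ prefactor, and the $M$-window in $G'_\eps$ bounds the truncation error by $p(M)\to 0$.

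\textbf{Main obstacle.} The key new difficulty relative to Proposition~\ref{prop:bdd in L2} is that upper bounds no longer suffice: matching lower bounds on the cross moment are needed to identify a limit rather than merely bound it. This forces the stochastic domination $L_{x,R}\succ\sum L_{x,R}^n$ in \eqref{eq:proof_bddL2} to be upgraded to an equality up to one extra excursion, and requires propagating the multiplicative errors $(1+\Cc/M)^{\pm n}$ and $(1+O(1/\sqrt{\abs{\log\eps}}))^n$ per excursion in \emph{both} directions; the absorption argument of Remark~\ref{rem:proof bdd in L2} handles the second type. A related subtlety is the uniformity of the Bessel tail asymptotic in the starting point $r=\sqrt{L_{x,R}/R}$, which under $G_\eps(x,\eps_0)$ is only controlled from above---hence the lemma must be stated uniformly in $r$ on bounded intervals and applied after conditioning on $L_{x,R}(\tau)$.
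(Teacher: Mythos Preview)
Your plan has a genuine gap at the step ``by \eqref{eq:prop local times and Bessel process}, conditionally on $L_{x,R}(\tau)$ and $L_{y,R}(\tau)$, the microscopic local-time profiles at $x$ and at $y$ are approximated by two independent zero-dimensional Bessel processes.'' Formula \eqref{eq:prop local times and Bessel process} holds only when the domain is a disc centred at the point in question; it does not give a Markovian description of $(L_{x,r}(\tau))_{r\le R}$ in a general domain after conditioning on $L_{x,R}(\tau)$ alone, and it certainly does not yield conditional independence between the profiles at $x$ and at $y$. The Brownian path visits the neighbourhoods of $x$ and $y$ in an interleaved fashion, and the excursion decomposition of Proposition~\ref{prop:bdd in L2} (which uses a \emph{single} mediating circle $\partial D(x,R)$, not one around each point) does not decouple them as you need. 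In particular, inside an excursion from $\partial D(y,\eps')$ to $\partial D(x,R)\cup\partial D$ the domain is not a disc centred at $y$, so \eqref{eq:prop local times and Bessel process} does not apply on the $y$ side at all.

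The paper's route is quite different and avoids computing the cross moment as a factorised limit. It freezes a \emph{fixed} mesoscopic scale $\eta$ (independent of $x,y,\eps,\delta$), restricts to pairs $(x,y)$ for which $D(y,\eta)$ does not meet the dyadic circles around $x$, and decomposes the path into excursions from $\partial D(y,\eta/e)$ to $\partial D(y,\eta)$. It then conditions on the \emph{entire} trajectory outside these excursions---including everything near $x$---so that only the behaviour near $y$ remains random. The essential new ingredient you are missing is Lemma~\ref{lem:independence local times and exit point}: once an excursion has gone deep (below a further scale $\eta'$), its local times inside $D(y,\eta/e)$ are, up to a factor $1+o_{\eta'\to 0}(1)$, independent of the starting and ending points on $\partial D(y,\eta)$. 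This is what licenses \eqref{eq:prop local times and Bessel process} on each deep excursion separately; the sum over (a bounded number of) excursions then gives a sum of independent squared Bessel processes, handled by Lemma~\ref{lem:Bessel Cauchy}. That lemma only asserts \emph{existence} of the limit together with the relations $l_1(b)e^{\gamma b}=l_1(0)=(1+p(M))l_2(M)$, which is all that is needed: the Cauchy property follows from $g_{\eps,\eps}(x,y)-g_{\eps,\delta}(x,y)\to 0$ without ever identifying a form $c(T)c(T')I(A)$. Your ``main obstacle'' paragraph correctly senses that two-sided control is needed, but the paper sidesteps the issue entirely by conditioning away the $x$-side and the macroscopic excursion structure rather than upgrading the inequalities of Proposition~\ref{prop:bdd in L2} to asymptotic equalities.
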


As mentioned in the introduction, to use the link between the local times and the zero-dimensional Bessel process \eqref{eq:prop local times and Bessel process}, we will use the following lemma proven in Appendix \ref{sec:proof lemma independence}:

\begin{lemma}\label{lem:independence local times and exit point}
Let $k,k',n \geq 0$ with $k' \geq k+1$ and $n \geq k' - k$. Denote $\eta = e^{-k}$, $\eta' = e^{-k'}$ and for all $i=1 \dots k' - k$, $r_i = \eta e^{-i}$. Consider $0 < r_n < \dots < r_{k'-k+1} < r_{k'-k} = \eta'$ and for $i = 1 \dots n$, $T_i \in \Bc([0,\infty))$. For any $y \in \partial D(0,\eta/e)$, we have
\begin{align}
1 - p(\eta') \leq 
\frac{ \PROB{y}{ \forall i=1 \dots n, L_{0,r_i}(\tau_{\partial D(0,\eta)}) \in T_i \vert \tau_{\partial D(0,\eta')} < \tau_{\partial D(0,\eta)}, B_{\tau_{\partial D(0,\eta)}}} }{ \PROB{y}{ \forall i=1 \dots n, L_{0,r_i}(\tau_{\partial D(0,\eta)}) \in T_i \vert \tau_{\partial D(0,\eta')} < \tau_{\partial D(0,\eta)}} }
\leq 1 + p(\eta')
\end{align}
with $p(\eta') \to 0$ as $\eta' \to 0$. $p(\eta')$ may depend on $\eta$.
\end{lemma}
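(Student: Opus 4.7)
The plan is to use the skew-product decomposition of planar Brownian motion in polar coordinates. Writing $B_t = R_t e^{i\Theta_t}$ with $R_t = |B_t|$ and $\Theta_t$ a continuous choice of argument, It\^o's formula produces a one-dimensional Brownian motion $W^{(2)}$, independent of the radial process $R$, such that $\Theta_t = \arg y + \int_0^t R_s^{-1} dW^{(2)}_s$. Conditional on $\sigma(R_s, s \geq 0)$, $\Theta_t$ is therefore Gaussian with mean $\arg y$ and variance $V_t := \int_0^t R_s^{-2} ds$. Crucially, the local times $L_{0, r_i}(\tau)$ are radial functionals, and $\tau := \tau_{\partial D(0, \eta)}$, $\tau' := \tau_{\partial D(0, \eta')}$, and the event $\{\tau' < \tau\}$ are all $\sigma(R)$-measurable; meanwhile $B_\tau = \eta e^{i\Theta_\tau}$, so the entire angular dependence of the exit point is encoded in $\Theta_\tau \bmod 2\pi$. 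The lemma thus reduces to showing that the distribution of $\Theta_\tau \bmod 2\pi$ given $R$ converges to the uniform law on $\R/2\pi\Z$ as $\eta' \to 0$.

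\medskip

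By Poisson summation, $\Theta_\tau \bmod 2\pi$ has conditional density
\begin{equation*}
\rho_{V_\tau}(\theta - \arg y) = \frac{1}{2\pi}\Bigl(1 + 2\sum_{n \geq 1} \cos\bigl(n(\theta - \arg y)\bigr) e^{-n^2 V_\tau/2}\Bigr),
\end{equation*}
which satisfies $|2\pi\rho_{V_\tau}(\theta) - 1| \leq 2 e^{-V_\tau/2}/(1-e^{-V_\tau/2})$ uniformly in $\theta$. Denoting $E := \{L_{0,r_i}(\tau) \in T_i, \forall i\} \in \sigma(R)$, Bayes' formula rewrites the ratio of interest as a ratio of conditional densities of $B_\tau$:
\begin{equation*}
\frac{\PROB{y}{E \mid \tau' < \tau, B_\tau = z}}{\PROB{y}{E \mid \tau' < \tau}} = \frac{\PROB{y}{B_\tau \in dz \mid E, \tau' < \tau}}{\PROB{y}{B_\tau \in dz \mid \tau' < \tau}},
\end{equation*}
and both conditional densities equal $\bigl(1 + \mathbb{E}[2\pi\rho_{V_\tau}(\arg z - \arg y) - 1 \mid \cdot]\bigr)/(2\pi\eta)$. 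It therefore suffices to show that $\mathbb{E}[e^{-V_\tau/2} \mid \cdot]$ is small for the two relevant conditionings.

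\medskip

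To control $V_\tau$, apply It\^o's formula to $\log R$: it is a continuous local martingale with quadratic variation $V_t$, and by Dambis' theorem $\log R_t = -(k+1) + \beta(V_t)$ for some standard one-dimensional Brownian motion $\beta$. On the event $\{\tau' < \tau\}$, $\beta$ first reaches $-k'$ before returning to $-k$ and subsequently hits $-k$; $V_\tau$ is the total Brownian clock time consumed. Classical one-dimensional hitting-time estimates then give $V_\tau \gtrsim (k'-k)^2$ with overwhelming probability conditional on $\{\tau' < \tau\}$, so $\mathbb{E}[e^{-V_\tau/2} \mid \tau' < \tau]$ vanishes at a stretched-exponential rate as $\eta' \to 0$, providing the desired $p(\eta')$.

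\medskip

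\textbf{Main obstacle.} The delicate point is that the expectation $\mathbb{E}[e^{-V_\tau/2}\mid \cdot]$ must be controlled also under the further conditioning on $E$, where na\"ive bounds degrade by the factor $1/\PROB{y}{E \mid \tau' < \tau}$ that can be very small. One handles this by splitting the expectation on $\{V_\tau \leq V_0\} \cup \{V_\tau > V_0\}$ for a well-chosen $V_0$ and arguing that the bad event has probability vanishing faster than any polynomial in $\eta'$, so that multiplication by $1/\PROB{y}{E\mid \tau'<\tau}$ still gives an $o(1)$ contribution. This is presumably the role played by the analogue of Lemma~7.4 of \cite{dembo2001}, which the author explicitly cites as inspiration for this argument.
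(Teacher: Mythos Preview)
Your overall architecture matches the paper's proof closely: skew-product decomposition, Poisson summation for the wrapped Gaussian density of the angle, reduction to showing that (a functional of) $e^{-V_\tau/2}$ has small conditional expectation. The paper also reduces first to the statement
\[
\PROB{y}{B_{\tau_\eta}\in C \,\big|\, \tau_{\eta'}<\tau_\eta,\ \forall i,\ L_{r_i}\in T_i} = (1+o_{\eta'\to 0}(1))\,\frac{\mathrm{Leb}(C)}{2\pi\eta},
\]
exactly the uniform-angle convergence you isolate.

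The genuine gap is in your final paragraph. You propose to control
\[
\EXPECT{y}{e^{-V_\tau/2}\,\big|\, E,\ \tau'<\tau}
\]
by showing $\PROB{y}{V_\tau\le V_0\mid \tau'<\tau}$ decays faster than any polynomial in $\eta'$ and then paying the factor $1/\PROB{y}{E\mid \tau'<\tau}$. But $p(\eta')$ in the lemma must be \emph{uniform} in the Borel sets $T_1,\dots,T_n$, and $\PROB{y}{E\mid \tau'<\tau}$ can be made arbitrarily small by choosing the $T_i$: no decay rate in $\eta'$ alone can absorb an arbitrarily large factor $1/\PROB{y}{E\mid\tau'<\tau}$. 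So the splitting argument as stated cannot close.

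The paper avoids this entirely by never dividing by $\PROB{y}{E\mid\tau'<\tau}$. Instead it bounds $V_\tau$ from below by a sum
\[
L=\sum_{i=1}^{k'-k}\frac{\sigma_i^{(2)}-\sigma_i^{(1)}}{(\eta' e^{i})^2},
\]
where the $\sigma_i^{(1)},\sigma_i^{(2)}$ are successive hitting times of circles $\partial D(0,\eta' e^{i-1/2})$ and $\partial D(0,\eta' e^{i})\cup\partial D(0,\eta' e^{i-1})$. The point is that the conditioning on $\{L_{r_i}\in T_i'\}$ affects each $\sigma_i^{(2)}-\sigma_i^{(1)}$ only through the discrete endpoint $|B_{\sigma_i^{(2)}}|$, and one has the \emph{unconditional} bound $\EXPECT{}{(\sigma_i^{(2)}-\sigma_i^{(1)})^{-1}\mid |B_{\sigma_i^{(2)}}|}\le c(\eta' e^{i})^{-2}$. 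Markov plus Jensen then give
\[
\PROB{z}{L<S\,\big|\,\forall i,\ L_{r_i}\in T_i'}\le \frac{cS}{k'-k}
\]
uniformly in the $T_i'$, which is exactly what is needed.
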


\begin{remark}\label{rem:indep local times and exit point}
If we had conditioned on $\tau_{\partial D(0,\eta')} < \tau_{\partial D(0,\eta)}, B_{\tau_{\partial D(0,\eta)}}, L_{0, \eta/e}(\tau_{\partial D(0,\eta)})$ rather than on $\tau_{\partial D(0,\eta')} < \tau_{\partial D(0,\eta)}, B_{\tau_{\partial D(0,\eta)}}$, the same conclusion would have held: up to a multiplicative error $1+o_{\eta' \to 0}(1)$, we can forget the conditioning on the exit point $B_{\tau_{\partial D(0,\eta)}}$. This is a direct consequence of Lemma \ref{lem:independence local times and exit point}.
\end{remark}

We now state the result that we will need on the zero-dimensional Bessel process to prove Proposition \ref{prop:Cauchy}. This lemma will be proven in Appendix \ref{sec:Appendix}.

\begin{lemma}\label{lem:Bessel Cauchy}
Let $\widetilde{\gamma} > \gamma > 0$, $b, \widetilde{b} \in \R$, $s_0 \geq 1$ an integer and for all $s \in [|1,s_0|], A_s \in \Bc(\R)$. Let $n \geq 1$ and $(R_s^{(i)}, s \geq 0), i = 1 \dots n$, independent zero-dimensional Bessel processes. Denote for all $s \geq 0$,
\[
R_s := \sqrt{ \sum_{i=1}^n \left( R^{(i)}_s \right)^2 }.
\]
Then the two following limits exist
\begin{align*}
l_1(b) &  := \lim_{t \to \infty}
t e^{\frac{\gamma^2}{2}t} \\
& \times \Brob{ \left. R_t \geq \gamma t+b, \forall s \in [|1,s_0|], R_s \in A_s, \forall s \in [|s_0, t|], R_s \leq \widetilde{\gamma} s + \widetilde{b} \right\vert \forall i=1 \dots n, R_{s_0}^{(i)} >0 }
\end{align*}
and
\begin{align*}
l_2(M) & := \lim_{t \to \infty}
\frac{1}{\sqrt{2 \pi}}
\sqrt{t} e^{-\frac{\gamma^2}{2}t} \\
& \times \ExpectB{ \left. e^{\gamma R_t} \indic{ \abs{R_t - \gamma t} \leq M \sqrt{t}} \indic{\forall s \in [|1,s_0|], R_s \in A_s, \forall s \in [|s_0, t|], R_s \leq \widetilde{\gamma} s + \widetilde{b}} \right\vert \forall i=1 \dots n, R_{s_0}^{(i)} >0 }.
\end{align*}
Moreover,
\begin{equation}\label{eq:lem Bessel Cauchy}
l_1(b) e^{b \gamma} = l_1(0) = (1 + p(M)) l_2(M)
\end{equation}
for some universal sequence $p(M)$ going to 0 as $M \to \infty$.
\end{lemma}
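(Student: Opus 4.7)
The plan is to reduce both limits to an asymptotic computation on a single zero-dimensional Bessel process, carry out that asymptotic via the explicit transition density, and control the barrier event on $[|s_0, t|]$ via a Doob $h$-transform.

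\textbf{Reduction to a single Bessel.} By additivity of squared Bessel dimensions, the sum $R_s^2 = \sum_{i=1}^n (R_s^{(i)})^2$ is itself a zero-dimensional squared Bessel process; on the event $\{\forall i,\, R_{s_0}^{(i)} > 0\}$ the conditional law of $(R_s)_{s \geq s_0}$ given $\mathcal{F}_{s_0}$ is that of a zero-dimensional Bessel issued from $R_{s_0} > 0$. Conditioning on $\mathcal{F}_{s_0}$ therefore factorises each limit as an expectation, over the conditional law up to time $s_0$ (which carries the indicator $\mathbf{1}_{R_s \in A_s,\, s \in [|1, s_0|]}$ and the conditioning event), of the $t \to \infty$ asymptotic of the corresponding observable for a single Bessel $R$ started from $r := R_{s_0}$.

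\textbf{Asymptotic via the Bessel density.} For a single zero-dimensional Bessel started at $r > 0$, the transition density on $(0,\infty)$ reads $p_t^{(0)}(r,y) = (r/t)\,e^{-(r^2+y^2)/(2t)}\,I_1(ry/t)$, the atom at $0$ contributing negligibly at the relevant scale. Ignoring momentarily the barrier event, a saddle-point change of variable $y = \gamma t + b + u$ in the tail integral defining $l_1(b)$, respectively $y = \gamma t + u\sqrt t$ in the exponential integral defining $l_2(M)$, together with the pointwise limits $I_1(ry/t) \to I_1(r\gamma)$ and $e^{-r^2/(2t)} \to 1$, isolates in each case a common factor of the form $r\,I_1(r\gamma)$ times an elementary integral. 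The $b$-dependence in the tail integral reduces to the prefactor $e^{-\gamma b}$, yielding $l_1(b)\,e^{\gamma b} = l_1(0)$; a direct comparison of the two remaining elementary integrals produces the identity $l_1(0) = (1+p(M))\,l_2(M)$, where $p(M) \to 0$ as $M \to \infty$ arises solely from truncating a Gaussian integral at $\pm M$.

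\textbf{Handling the barrier event.} The main obstacle is the path event $\{R_s \leq \widetilde\gamma s + \widetilde b,\, s \in [|s_0, t|]\}$: I need to show not only that it does not destroy the leading-order asymptotics, but also that it contributes the same multiplicative factor to $l_1$ and to $l_2$, so that this contribution cancels in the final identity. The natural tool is Doob's $h$-transform with $h(r) = r^2$: the martingale property of $R_t^2$ under the zero-dimensional law identifies $r^2$ as a space-time harmonic function, and the corresponding $h$-transform sends the zero-dimensional Bessel conditioned on survival up to time $t$ to a four-dimensional Bessel. Under the transformed law the barrier event becomes a path event for a genuine four-dimensional Bessel whose terminal value is constrained to lie near $\gamma t$; since $\widetilde\gamma > \gamma$, and since a four-dimensional Bessel bridge from $r$ to $\gamma t$ over $[s_0,t]$ concentrates along its linear interpolation with fluctuations of order $\sqrt t$ (by standard bridge estimates), the barrier holds with a probability depending only on $\widetilde\gamma - \gamma$, $\widetilde b$, $r$ and $s_0$, independently of whether one looks at the probability observable defining $l_1$ or the exponential observable defining $l_2$. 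This common multiplicative factor survives in the limit and cancels in the ratio, giving the announced identities.
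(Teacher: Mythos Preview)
Your overall architecture matches the paper's proof: reduce to a single zero-dimensional Bessel by conditioning on $\mathcal F_{s_0}$ (using additivity of $\mathrm{BESQ}^0$), then read off the asymptotics from the explicit transition density $q_t(r,y)=(r/t)e^{-(r^2+y^2)/(2t)}I_1(ry/t)$. The density computations you sketch are exactly what the paper does, and they give $l_1(b)e^{\gamma b}=l_1(0)$ and the identification of $p(M)$ with the Gaussian tail outside $[-M,M]$.

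The divergence is in the barrier event, and here your argument has two issues. First, a small inaccuracy: the Doob $h$-transform with $h(r)=r^2$ does \emph{not} correspond to conditioning on survival up to time $t$; it corresponds to conditioning on perpetual survival, and in any case under the resulting $\mathrm{BES}^4$ law the terminal constraint $R_t\approx\gamma t$ is still a large-deviation event, so the transform by itself does not bring you closer to a bridge. Second, and more substantively, the crux of your argument is the claim that the barrier probability, after conditioning on the terminal value, is the \emph{same} multiplicative constant whether the terminal value is integrated as in $l_1$ or weighted as in $l_2$. This needs a uniformity statement: that the bridge law from $r$ to $y$ over $[s_0,t]$ satisfies the barrier with a probability that is continuous in $y$ on the scale $y=\gamma t+O(\sqrt t)$, uniformly in $t$. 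You assert this via ``standard bridge estimates'' but do not supply it, and it is precisely the point where the argument could fail if done carelessly.

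The paper sidesteps this entirely. Instead of arguing that the barrier contributes a common nontrivial factor, it uses the already-established Lemma~\ref{lem:Bessel first moment} (inequalities \eqref{eq:lem Bessel first moment good event} and \eqref{eq:lem Bessel first moment good event exponential}) to show that, conditional on the terminal event, the barrier on $[|s_0',t|]$ holds with probability $\geq 1-p(s_0')$ for \emph{both} the probability functional and the exponential functional. One then truncates the barrier to the finite window $[|1,s_0'|]$, conditions on $R_{s_0'}$, and is left with the bare density asymptotic from a fixed starting point. This is cleaner: it shows the long-range part of the barrier contributes a factor tending to $1$, so nothing needs to cancel, and the short-range part factors out identically by conditioning.
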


We now prove Proposition \ref{prop:Cauchy}.

\begin{proof}[Proof of Proposition \ref{prop:Cauchy}]
For convenience, if $u,v \in \R$, we will write $u = \pm v$ in this proof when we mean $-v \leq u \leq v$.

We start by proving that $(\tilde{\nu}_\eps^\gamma(A \times T), \eps >0)$ is a Cauchy sequence in $L^2$.
We want to show that
\[
\limsup_{\eps, \delta \to 0} \EXPECT{x_0}{ \left( \tilde{\nu}_\eps^\gamma(A \times T) - \tilde{\nu}_\delta^\gamma(A \times T) \right)^2 } = 0.
\]
By expanding the product, we notice that it is enough to show that
\[
\limsup_{\eps, \delta \to 0} \EXPECT{x_0}{ \tilde{\nu}_\eps^\gamma(A \times T) \tilde{\nu}_\eps^\gamma(A \times T) } - \EXPECT{x_0}{ \tilde{\nu}_\eps^\gamma(A \times T) \tilde{\nu}_\delta^\gamma(A \times T) } \leq 0.
\]
Take $\eps, \delta >0$. In this proof, we will denote $f_{\eps, \delta}(x,y) := \abs{\log \delta} \abs{\log \eps} (\delta\eps)^{-\gamma^2/2}$ times
\[
\PROB{x_0}{\sqrt{\frac{L_{x,\eps}(\tau)}{\eps}} \geq \gamma \log \frac{1}{\eps} + b , \sqrt{\frac{L_{y,\delta}(\tau)}{\delta}} \geq \gamma \log \frac{1}{\delta} + b, G_\eps(x,\eps_0), G_\delta(y,\eps_0) }.
\]
Take $\eta \in \{\bar{r}, r < \eps_0\}$ and denote $(A \times A)_\eta$ the subset of $A \times A$ made of "good points":
\begin{equation}\label{eq:proof def (AxA)_eta}
(A \times A)_\eta := \left\{ (x,y) \in A \times A: D(y, \eta) \cap \bigcup_{r \leq \eps_0} \partial D(x,\bar{r}) = \varnothing \right\}.
\end{equation}
If $(x,y) \in (A \times A)_\eta$, the two sequences of circles $(\partial D(x,\bar{r}), r \leq \eps_0)$ and $(\partial D(y,\bar{r}), r \leq \eps_0)$ will not interact between each other inside $D(y,\eta)$. Since the Lebesgue measure of $(A \times A) \backslash (A \times A)_\eta$ goes to $0$ when $\eta \to 0$, Proposition \ref{prop:bdd in L2}, or more precisely \eqref{eq:last_eq_24}, implies that
\begin{align*}
&\int_{(A \times A) \backslash (A \times A)_\eta} f_{\eps, \eps}(x,y) dx dy \leq \int_{(A \times A) \backslash (A \times A)_\eta} \sup_{\eps} f_{\eps, \eps}(x,y) dx dy = o_{\eta \to 1}(1).
\end{align*}
$\EXPECT{x_0}{ \tilde{\nu}_\eps^\gamma(A \times T) \tilde{\nu}_\eps^\gamma(A \times T) } - \EXPECT{x_0}{ \tilde{\nu}_\eps^\gamma(A \times T) \tilde{\nu}_\delta^\gamma(A \times T) }$ is thus at most
\[
\leq o_{\eta \to 1}(1) + \int_{(A \times A)_\eta} (f_{\eps,\eps}(x,y) - f_{\eps, \delta}(x,y)) dx dy.
\]
Our objective is now to bound from above $f_{\eps, \eps}(x,y) - f_{\eps, \delta}(x,y)$ for $(x,y) \in (A \times A)_\eta$. The two probabilities in $f_{\eps, \eps}(x,y)$ and in $f_{\eps, \delta}(x,y)$ differ only from what is required around $y$. We are thus going to focus around $y$. We consider the excursions from $\partial D(y,\eta/e)$ to $\partial D(y,\eta)$: define $\sigma_0^{(2)} := 0$ and for all $i \geq 1$,
\[
\sigma_i^{(1)} := \inf \left\{ t > \sigma_{i-1}^{(2)}: B_t \in \partial D(y, \eta/e) \right\}
\mathrm{~and~}
\sigma_i^{(2)} := \inf \left\{ t > \sigma_i^{(1)}: B_t \in \partial D(y, \eta) \right\}.
\]
We denote by $N := \max \{i \geq 1: \sigma_i^{(2)} < \tau\}$ the number of excursions. The local times of circles centred at $y$ inside $D(y,\eta/e)$ accumulated during the $i$-th excursion, that we will denote by $(L_{y,r}^{(i)}, r \leq \eta/e)$, depend on the starting point $B_{\sigma_i^{(1)}}$ and on the exit point $B_{\sigma_i^{(2)}}$. But this dependence is weak if the excursion goes deeply inside $D(y,\eta/e)$: this is the content of Lemma \ref{lem:independence local times and exit point}. This is why we consider $\eta' \in (\bar{r}, r < \eps_0)$ much smaller than $\eta$ and for all $i \geq 1$, we consider the random variable $v_i$
\[
v_i =
\begin{cases}
1 \mathrm{~if~} B \left[ \sigma_i^{(1)}, \sigma_i^{(2)} \right] \cap D(y,\eta'/e) \neq \varnothing, \\
0 \mathrm{~otherwise}.
\end{cases}
\]

We claim that there exists $N_\eta$ independent of $x,y,\eps$ such that
\begin{equation}
\label{eq:proof Cauchy claim postponed}
\PROB{x_0}{ \sqrt{ \frac{1}{\eps} L_{x,\eps}(\tau)}, \sqrt{ \frac{1}{\eps} L_{y,\eps}(\tau)} \geq \gamma \log \frac{1}{\eps} + b, N \geq N_\eta}
\leq \eta \frac{1}{(\log \eps)^2} \eps^{\gamma^2}.
\end{equation}
This is in the same spirit as what we did in Section \ref{sec:uniform integrability}. To not interrupt the flow of the proof, we postpone the justification of this claim at the very end of the proof.

It is thus actually enough to bound from above $g_{\eps,\eps}(x,y) - g_{\eps,\delta}(x,y)$ where $g_{\eps, \delta}(x,y)$ is the modification of $f_{\eps, \delta}(x,y)$: $g_{\eps, \delta}(x,y) := \log (1/\delta) \log (1/\eps) (\delta\eps)^{-\gamma^2/2}$ times
\[
\PROB{x_0}{\sqrt{\frac{L_{x,\eps}(\tau)}{\eps}} \geq \gamma \log \frac{1}{\eps} + b , \sqrt{\frac{L_{y,\delta}(\tau)}{\delta}} \geq \gamma \log \frac{1}{\delta} + b, G_\eps(x,\eps_0), G_\delta(y,\eps_0), N < N_\eta }.
\]
We are going to condition on the whole trajectory except the excursions from $\partial D(y,\eta/e)$ to $\partial D(y,\eta)$ which visit $D(y,\eta'/e)$. The only randomness remaining will come from $L_{y,r}(\tau)$ for $r < \eta$. We have:
\begin{align*}
& \frac{1}{\log (\delta) \log (\eps)} (\delta \eps)^{\gamma^2/2} g_{\eps,\delta}(x,y)
= \expect_{x_0} \Bigg[ \indic{\sqrt{\frac{L_{x,\eps}(\tau)}{\eps}} \geq \gamma \log \frac{1}{\eps} + b , G_\eps(x,\eps_0), G_\eta(y,\eps_0), N < N_\eta}  \\
& ~~~~~ \times
\prob_{x_0} \Bigg( \forall r \in [\eta', \eta/e], \sum_{i = 1}^N \indic{v_i = 1} L_{y, \bar{r}}^{(i)} \leq \tilde{\gamma} \bar{r} \left( \log \bar{r} \right)^2 - \sum_{i = 1}^N \indic{v_i = 0} L_{y, \bar{r}}^{(i)} \\
&  \sqrt{\tfrac{L_{y,\delta}(\tau)}{\delta}} \geq \gamma \log \tfrac{1}{\delta} + b, G_\delta(y,\eta'), \Bigg\vert N, B_{\sigma_i^{(1)}}, B_{\sigma_i^{(2)}}, v_i, \left( \indic{v_i = 0} L_{y, \bar{r}}^{(i)}, r \in [\eta',\eta/e] \right), \forall i = 1 \dots N \Bigg)
\Bigg].
\end{align*}
We are interested in this last conditional probability. For a given $i \geq 1$, Lemma \ref{lem:independence local times and exit point} (or more precisely Remark \ref{rem:indep local times and exit point} following Lemma \ref{lem:independence local times and exit point}) tells us that there exists $p(\eta')$ which may depend on $\eta$ and which goes to 0 as $\eta' \to 0$, such that for any sequence $(T_r, r < \eta/e)$ of Borel subsets of $\R$,
\begin{align*}
& \Prob{L_{y,\delta}^{(i)} \in T_\delta, \forall r \in [\delta, \eta/e), L_{y, \bar{r}}^{(i)} \in T_{\bar{r}} \left\vert B_{\sigma_i^{(1)}}, B_{\sigma_i^{(2)}}, v_i = 1, L_{y,\eta/e}^{(i)} \right.} \\
& =
(1 \pm p(\eta')) \Prob{L_{y,\delta}^{(i)} \in T_\delta, \forall r \in [\delta, \eta/e), L_{y, \bar{r}}^{(i)} \in T_{\bar{r}} \left\vert v_i = 1, L_{y,\eta/e}^{(i)} \right.}.
\end{align*}
Now, \eqref{eq:prop local times and Bessel process} tells us that, conditioned on $v_i = 1$ and $L_{y,\eta/e}^{(i)}$,
\[
\left( L_{y,r_s}^{(i)}, r_s = \frac{\eta}{e} e^{-s}, s \geq 0 \right)
\overset{\mathrm{law}}{=} \left( \left( R_s^{(i)} \right)^2, s \geq 0 \right)
\]
where $R^{(i)}$ is a zero-dimensional Bessel process starting from $\sqrt{e L_{y,\eta/e}^{(i)} / \eta}$ conditioned to be positive at time $s_0 = \log \eta / (e \eta')$. By denoting for all $s \geq 0$,
\[
R_s := \sqrt{ \sum_{i=1}^N v_i \left( R^{(i)}_s \right)^2 },
\]
we thus have
\begin{align*}
& (1 \pm p(\eta'))^{-N_\eta} \frac{1}{\log (\delta) \log (\eps)} (\delta \eps)^{\gamma^2/2} g_{\eps,\delta}(x,y) \\
& = \expect_{x_0} \Bigg[ \indic{\sqrt{\frac{L_{x,\eps}(\tau)}{\eps}} \geq \gamma \log \frac{1}{\eps} + b , G_\eps(x,\eps_0), G_\eta(y,\eps_0), N < N_\eta} \prob_{x_0} \Bigg( R_{\log \frac{\eta}{e \delta}} \geq \gamma \log \frac{1}{\delta} + b, \\
& ~~~~~ \forall s \in \left[ \left\vert \log \frac{\eta}{\eta'},  \log \frac{\eta}{e \delta} \right\vert \right], R_s \leq \tilde{\gamma} s + \tilde{\gamma} \log \frac{e}{\eta},
\mathrm{~and~}
\forall s \in \left[ \left\vert 1, \log \frac{\eta}{\delta} \right\vert \right],\\
& ~~~~~ R_s^2 \leq \tilde{\gamma}^2 \left( s + \log \frac{e}{\eta} \right)^2 - \sum_{i = 1}^N \indic{v_i = 0} L_{y, r_s}^{(i)}
\Bigg\vert N, v_i, \left( \indic{v_i = 0} L_{y, \bar{r}}^{(i)} , r \in [\eta', \eta/e] \right), \forall i = 1 \dots N \Bigg)
\Bigg].
\end{align*}
By Lemma \ref{lem:Bessel Cauchy}, the conditional probability times $\log(1/\delta) \delta^{-\gamma^2/2}$ converges as $\delta \to 0$. Hence
\begin{align*}
\limsup_{\eps, \delta \to 0} \left\{ (1+p(\eta'))^{-N_\eta} g_{\eps, \eps}(x,y) - (1-p(\eta'))^{-N_\eta} g_{\eps, \delta}(x,y) \right\} \leq 0.
\end{align*}
$g_{\eps,\delta}(x,y)$ and $N_\eta$ being independent of $\eta'$ it yields
\[
\limsup_{\eps, \delta \to 0} g_{\eps, \eps}(x,y) - g_{\eps, \delta}(x,y) \leq 0.
\]
This concludes the proof of the fact that $(\tilde{\nu}_\eps^\gamma(A \times T), \eps >0)$ is a Cauchy sequence in $L^2$, assuming the veracity of the claim \eqref{eq:proof Cauchy claim postponed}. To prove \eqref{eq:prop Cauchy decomposition product}, we notice that
\begin{align*}
& \EXPECT{x_0}{ \left( \widetilde{\nu}_\eps^\gamma (A \times (b,\infty)) - e^{- \gamma b} \widetilde{\nu}_\eps^\gamma (A \times (0,\infty)) \right)^2 } \\
& = \left\{ \EXPECT{x_0}{ \nu_\eps(A \times (b, \infty))^2} - e^{- \gamma b} \EXPECT{x_0}{ \nu_\eps(A \times (b, \infty)) \widetilde{\nu}_\eps^\gamma (A \times (0,\infty)) } \right\} \\
& + e^{- \gamma b} \left\{ e^{- \gamma b} \EXPECT{x_0}{ \nu_\eps(A \times (0, \infty))^2} - \EXPECT{x_0}{ \nu_\eps(A \times (b, \infty)) \widetilde{\nu}_\eps^\gamma (A \times (0,\infty)) } \right\}
\end{align*}
and we want to show that the two terms in brackets go to zero. We proceed in the exact same way as before. We have to control the difference of two probabilities of events which differ only around one point. Around this point, the local times behave as a zero-dimensional squared Bessel process and we use the first equality of claim \eqref{eq:lem Bessel Cauchy} of Lemma \ref{lem:Bessel Cauchy}. The proof of \eqref{eq:prop Cauchy exponential} is similar with the use of the second equality of claim \eqref{eq:lem Bessel Cauchy} of Lemma \ref{lem:Bessel Cauchy} and a claim similar to \eqref{eq:proof Cauchy claim postponed} (we omit the details).

We now finish the proof by proving \eqref{eq:proof Cauchy claim postponed}. As this is a similar reasoning as the ones we saw in Section \ref{sec:uniform integrability}, we will be brief. Conditioned on $B_{\sigma_i^{(1)}}$, $B_{\sigma_i^{(2)}}$ and on the fact that the $i$-th excursion visits $\partial D(y,\eps)$, the local time $L_{y,\eps}^{(i)}$ of $\partial D(y,\eps)$ accumulated during the $i$-th excursion is approximatively an exponential variable with mean $2 \log (O(1)/\eps)$ (see Lemma \ref{lem:approx local times exponential} for a precise statement). Moreover, conditioned on the starting and ending points of the excursion, the probability for the excursion to visit $\partial D(y,\eps)$ is at most $O(1) / \log (1/\eps)$. Hence, conditioned on the number of excursions $N$, $L_{y,\eps}(\tau)$ can be stochastically dominated by a Gamma random variable with scale parameter $1 / (2 \log (C/\eps))$ and shape parameter having the law of a binomial variable: the sum of $N$ independent Bernouilli random variables with success probability $C/\log(1/\eps)$. By increasing the value of $C$ if necessary, the same is true for $L_{x,\eps}(\tau)$ with $N$ replaced by $N+1$ (we could visit $\partial D(x,\eps)$ before $\partial D(y,\eta/e)$). Hence
\begin{align}
& \PROB{x_0}{ \sqrt{ \frac{1}{\eps} L_{x,\eps}(\tau)}, \sqrt{ \frac{1}{\eps} L_{y,\eps}(\tau)} \geq \gamma \log \frac{1}{\eps} + b, N \geq N_\eta} \nonumber \\
& \leq \sum_{n \geq N_\eta-1} \PROB{x_0}{N = n-1} \left\{ \sum_{k=1}^n \binom{n}{k} \left( \frac{C}{\log (1/\eps)} \right)^k \eps^{\gamma^2/2} \sum_{l=0}^{k-1} \frac{1}{l!} \left( \frac{\gamma^2}{2} \log \frac{C}{\eps} \right)^l \right\}^2 \nonumber \\
& = \left( \frac{1}{\log \eps} \right)^2 \eps^{\gamma^2} \sum_{n \geq N_\eta-1} \PROB{x_0}{N = n-1} \left\{ \sum_{k=1}^n \binom{n}{k} C'^k \sum_{l=0}^{k-1} \frac{1}{l!} \left( \log \frac{1}{\eps} \right)^{l-k+1} \right\}^2. \label{eq:proof Cauchy claim postponed2}
\end{align}
Noticing that the last sum over $l = 0 \cdots k-1$ is at most (by decomposing it into the sums over $l= 0 \dots \floor{k/2}-1$ and $l = \floor{k/2} \dots k-1$ for instance)
\[
k \left( \log \frac{1}{\eps} \right)^{\floor{k/2}-k} + \frac{k}{\floor{k/2}!},
\]
we see that for any $a >0$, there exists $C = C(a)>0$ such that the sum over $k=1 \dots n$ in brackets is at most $C(a) (1+a)^n$. Moreover, $\PROB{x_0}{N=n-1} \leq p^{n-1}$ for some $p <1$ depending on $\eta$. Hence, by considering $a$ small enough so that $(1+a)p < 1$, the sum over $n$ in \eqref{eq:proof Cauchy claim postponed2} is at most $C (p(1+a))^{N_\eta} \leq \eta$ if $N_\eta$ is large enough. This proves the claim \eqref{eq:proof Cauchy claim postponed} and finishes the proof.
\end{proof}

\section{Vague convergence, identification of the limits and properties of \texorpdfstring{$\mu^\gamma$}{TEXT}}\label{sec:proof of theorems}

Our proof of Theorems \ref{th:convergence measures} and \ref{th:identification limits} relies on the following:

\begin{proposition}\label{prop:convergence L1}
The sequences $(\nu_\eps^\gamma(A \times T), \eps >0)$ and $(\mu_\eps^\gamma(A), \eps >0)$ converge in $L^1$. Moreover,
\begin{equation}\label{eq:prop convergence L1 identification limits}
\lim_{\eps \to 0} e^{\gamma b} \nu_\eps^\gamma(A \times (b, \infty))
=
\lim_{\eps \to 0} \nu_\eps^\gamma(A \times (0, \infty))
=
\frac{1}{\sqrt{2\pi}\gamma} \lim_{\eps \to 0} \mu_\eps^\gamma(A)
\quad \quad \prob_{x_0}-\mathrm{a.s.}
\end{equation}
\end{proposition}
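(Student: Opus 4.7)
The plan is to deduce $L^1$-convergence by sandwiching the full measures between the tilde-regularized approximants of Section \ref{sec:uniform integrability}, and then to identify their limits through the $L^2$-estimates of Section \ref{sec:Cauchy in L2}. For each fixed $\eps_0$ (and, in the case of $\mu_\eps^\gamma$, each fixed $M$), Proposition \ref{prop:Cauchy} asserts that $\widetilde{\nu}_\eps^\gamma(A \times T)$ and $\widetilde{\mu}_\eps^\gamma(A)$ are Cauchy in $L^2$ as $\eps \to 0$, hence they converge in $L^2$ and a fortiori in $L^1$. On the other hand, by nonnegativity of the measures together with \eqref{eq:prop first moment good event} and \eqref{eq:prop first moment exponential good event},
\[
\EXPECT{x_0}{\nu_\eps^\gamma(A \times T) - \widetilde{\nu}_\eps^\gamma(A \times T)} \leq p(\eps_0), \qquad \EXPECT{x_0}{\mu_\eps^\gamma(A) - \widetilde{\mu}_\eps^\gamma(A)} \leq p'(\eps_0,M)
\]
for every $\eps < \eps_0$, with $p(\eps_0) \to 0$ as $\eps_0 \to 0$ and $p'(\eps_0,M) \to 0$ as $\eps_0 \to 0$ and $M \to \infty$, uniformly in $\eps$.

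Next, $L^1$-convergence of the full sequences will follow from a standard triangle-inequality argument: for $\eps, \delta < \eps_0$,
\[
\EXPECT{x_0}{|\nu_\eps^\gamma(A \times T) - \nu_\delta^\gamma(A \times T)|} \leq 2 p(\eps_0) + \EXPECT{x_0}{|\widetilde{\nu}_\eps^\gamma(A \times T) - \widetilde{\nu}_\delta^\gamma(A \times T)|}.
\]
The second term tends to $0$ as $\eps, \delta \to 0$ (for fixed $\eps_0$) since the tilde-sequence is Cauchy in $L^2 \subset L^1$, and the first term tends to $0$ as $\eps_0 \to 0$. Hence $(\nu_\eps^\gamma(A \times T))_\eps$ is Cauchy in $L^1$, and the same argument with the additional outer limit $M \to \infty$ handles $(\mu_\eps^\gamma(A))_\eps$.

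For the identification \eqref{eq:prop convergence L1 identification limits}, the same sandwiching is applied to the differences. By the triangle inequality together with Cauchy--Schwarz and \eqref{eq:prop Cauchy decomposition product},
\[
\EXPECT{x_0}{|\nu_\eps^\gamma(A \times (b,\infty)) - e^{-\gamma b} \nu_\eps^\gamma(A \times (0,\infty))|} \leq (1 + e^{-\gamma b}) p(\eps_0) + o_{\eps \to 0}(1),
\]
and letting first $\eps \to 0$ and then $\eps_0 \to 0$ shows that the $L^1$-distance between the two sides vanishes. Uniqueness of $L^1$-limits then forces the a.s.\ limits of $e^{\gamma b}\nu_\eps^\gamma(A \times (b,\infty))$ and of $\nu_\eps^\gamma(A \times (0,\infty))$ to coincide. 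An identical argument using \eqref{eq:prop Cauchy exponential} combined with \eqref{eq:prop first moment good event}--\eqref{eq:prop first moment exponential good event} and the triple limit $\eps \to 0$, $\eps_0 \to 0$, $M \to \infty$ produces the a.s.\ equality $\frac{1}{\sqrt{2\pi}\gamma} \mu^\gamma(A) = \nu^\gamma(A \times (0,\infty))$.

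The only real subtlety is ensuring that the three layers of approximation (cutoff $\eps_0$, truncation $M$, and scale $\eps$) can be interchanged correctly; this is exactly what the uniform-in-$\eps$ first-moment bounds of Section \ref{sec:first moment estimates} together with the $L^2$-estimates of Section \ref{sec:Cauchy in L2} were engineered to guarantee. Once these are in hand, the proof is a matter of careful bookkeeping with the triangle inequality and requires no further probabilistic input.
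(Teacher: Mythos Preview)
Your argument is correct and matches the paper's own proof: sandwich the full measures by the tilde-versions via \eqref{eq:prop first moment good event}--\eqref{eq:prop first moment exponential good event}, use Proposition \ref{prop:Cauchy} for the $L^2$-Cauchy property, then let $\eps_0\to 0$ and $M\to\infty$. The only slip is that Proposition \ref{prop:Cauchy} does not literally assert that $\widetilde{\mu}_\eps^\gamma(A)$ is Cauchy in $L^2$ (it only gives \eqref{eq:prop Cauchy exponential}), but your subsequent handling via the outer limit $M\to\infty$ is exactly how the paper proceeds.
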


The proof is straightforward from Propositions \ref{prop:first moment estimates}, \ref{prop:first moment estimates exponential} and \ref{prop:Cauchy}:

\begin{proof}
By \eqref{eq:prop first moment good event}, for any $\eps, \delta >0$ small enough,
\[
\EXPECT{x_0}{ ~ \abs{\nu_\eps^\gamma(A \times T) - \nu_\delta^\gamma(A \times T)} ~} \leq 2p(\eps_0) + \EXPECT{x_0}{ ~ \abs{\widetilde{\nu}_\eps^\gamma(A \times T) - \widetilde{\nu}_\delta^\gamma(A \times T)} ~}.
\]
Proposition \ref{prop:Cauchy} giving
\[
\limsup_{\eps, \delta \to 0} \EXPECT{x_0}{ ~ \abs{\widetilde{\nu}_\eps^\gamma(A \times T) - \widetilde{\nu}_\delta^\gamma(A \times T)} ~}
\leq
\limsup_{\eps, \delta \to 0} \EXPECT{x_0}{ \left(\widetilde{\nu}_\eps^\gamma(A \times T) - \widetilde{\nu}_\delta^\gamma(A \times T) \right)^2 }^{1/2} = 0,
\]
it implies that
\[
\limsup_{\eps, \delta \to 0} \EXPECT{x_0}{ ~ \abs{\nu_\eps^\gamma(A \times T) - \nu_\delta^\gamma(A \times T)} ~}
\leq 2 p(\eps_0).
\]
Since the left hand side term does not depend on $\eps_0$ and since $p(\eps_0) \to 0$ as $\eps_0 \to 0$, it finally implies that
\[
\limsup_{\eps, \delta \to 0} \EXPECT{x_0}{ ~ \abs{\nu_\eps^\gamma(A \times T) - \nu_\delta^\gamma(A \times T)} ~}
\leq 0
\]
which proves the convergence in $L^1$ of $(\nu_\eps^\gamma(A \times T), \eps >0)$. Using \eqref{eq:prop first moment good event} and \eqref{eq:prop Cauchy decomposition product}, respectively \eqref{eq:prop first moment good event}, \eqref{eq:prop first moment exponential good event} and \eqref{eq:prop Cauchy exponential}, we can show in the same way that
\[
\limsup_{\eps \to 0} \EXPECT{x_0}{ ~ \abs{ e^{\gamma b} \nu_\eps^\gamma(A \times (b,\infty)) - \nu_\eps^\gamma(A \times (0,\infty))} ~}
= 0,
\]
respectively
\[
\limsup_{\eps \to 0} \EXPECT{x_0}{ ~ \abs{ \frac{1}{\sqrt{2\pi}\gamma} \mu_\eps^\gamma(A) - \nu_\eps^\gamma(A \times (0,\infty))} ~}
= 0.
\]
As $(\nu_\eps^\gamma(A \times (0,\infty)), \eps>0)$ converges, this shows the convergence of $(\mu_\eps^\gamma(A), \eps >0)$ and the identification of the limits \eqref{eq:prop convergence L1 identification limits}.
\end{proof}

\begin{proof}[Proof of Theorems \ref{th:convergence measures} and \ref{th:identification limits}]
By Proposition \ref{prop:convergence L1}, $(\nu_\eps^\gamma(A \times T), \eps >0)$ and $(\mu_\eps^\gamma(A), \eps >0)$ converge in probability for any $A \in \Bc(D)$ and $T$ of the form $(b, \infty)$ with $b \in \R$. From this, we obtain the convergence in probability for the vague topology of the random measures $(\nu_\eps^\gamma, \eps >0)$ and $(\mu_\eps^\gamma, \eps >0)$ through classical arguments which can be found in \cite{berestycki2017}, Section 6 (the reasoning therein is for the topology of weak convergence but there is no difficulty to adapt it to the topology of vague convergence). This proves Theorem \ref{th:convergence measures}. We now turn to the proof of Theorem \ref{th:identification limits}. We will abusively denote the measure $A \in \Bc(D) \mapsto \nu^\gamma(A \times (0,\infty))$ by $\nu^\gamma(dx \times (0,\infty))$ and we consider the measure $\bar{\nu}^\gamma$ on $D \times \R$
\[
\bar{\nu}^\gamma(dx, dt) :=
\nu^\gamma(dx \times (0,\infty)) e^{- \gamma t} \gamma dt.
\]
The first equality of \eqref{eq:prop convergence L1 identification limits} shows that $\prob_{x_0}$-a.s. the measures $\nu^\gamma$ and $\bar{\nu}^\gamma$ coincide on the countable $\pi$-system of subsets of $D \times \R$ of the form $[x_1,y_1) \times [x_2,y_2) \times (b,\infty)$ with $x_1,x_2,y_1,y_2,b \in \Q$. This $\pi$-system generates the Borel $\sigma$-field on $D \times \R$ and the measures $\nu^\gamma$ and $\bar{\nu}^\gamma$ are $\prob_{x_0}$-a.s. $\sigma$-finite. Hence $\nu^\gamma = \bar{\nu}^\gamma$ $\prob_{x_0}$-a.s. The same reasoning and the second equality of \eqref{eq:prop convergence L1 identification limits} shows that the measures $\nu^\gamma(dx \times (0,\infty))$ and $\mu^\gamma(dx) / (\sqrt{2\pi} \gamma)$ are $\prob_{x_0}$-a.s. equal. This finishes to prove Theorem \ref{th:identification limits}.
\end{proof}

We now explain how we obtain the links between the work of Bass, Burdzy and Koshnevisan \cite{bass1994} and the one of Aïdékon, Hu and Shi \cite{AidekonHuShi2018} with ours. For this small part, we are going to use their notations that we recall: if $z \in \partial D$ is a nice boundary point, i.e. a point where the boundary $\partial D$ is locally an analytic curve, and $x \in D$,
\begin{enumerate}
\item[-]
$\prob_D^{x_0,z}$ denotes the probability measure of Brownian motion starting from $x_0$ and conditioned to exit $D$ through $z$ (see \cite{AidekonHuShi2018}, Notation 2.1 (i)),
\item[-]
$\Q_{x,D}^{x_0,a}$ is the probability measures of trajectories consisting of, first a Brownian motion starting from $x_0$ and conditioned to hit $x$ before exiting the domain, second a Poisson point process of excursions from $x$, and third a Brownian motion starting from $x$ and killed when it exits for the first time the domain (written $\Q_a^x$ in \cite{bass1994}, p.606),
\item[-]
$\Q_{x,D}^{x_0,z,a}$ is similar to $\Q_{x,D}^{x_0,a}$ except that the last part of the trajectory is a Brownian motion conditioned to exit $D$ through $z$ (see \cite{AidekonHuShi2018}, Proposition 3.5).
\end{enumerate}
We will also denote $C_*[0,\infty)$ the set of all parametrised continuous planar curves $c$ defined on a finite interval $[0,t_c]$ with $t_c \in (0,\infty)$. $C_*[0,\infty)$ is equipped with the Skorokhod topology.
For any event $C \in \Bc(C_*[0,\infty))$, we have
\begin{equation}
\label{eq:relations different probability measures}
\prob_D^{x_0,z} (C) = \lim_{r \to 0} \frac{\PROB{x_0}{C, \abs{B_\tau - z} \leq r}}{\PROB{x_0}{\abs{B_\tau - z} \leq r}}
\mathrm{~and~}
\Q_{x,D}^{x_0,z,a} (C) = \lim_{r \to 0} \frac{\Q_{x,D}^{x_0,a}(C, \abs{B_\tau - z} \leq r)}{\Q_{x,D}^{x_0,a}(\abs{B_\tau - z} \leq r)}.
\end{equation}

The following proposition characterises the measures $\mu^\gamma$. Let us emphasise that we only assume that the domain $D$ is bounded and simply connected here.

\begin{proposition}\label{prop:formula}
For every $\gamma \in (0,2)$ and every non-negative measurable function $f$ on $\R^2 \times C_*[0,\infty)$, we have with $a = \gamma^2/2$,
\begin{equation}
\label{eq:prop formula}
\EXPECT{x_0}{ \int_{\R^2} f(x,B) \mu^\gamma(dx)} = \sqrt{2\pi}\gamma \int_D \expect_{\Q_{x,D}^{x_0,a}} [f(x,B)] R(x,D)^{\gamma^2/2} G_D(x_0,x) dx.
\end{equation}
\end{proposition}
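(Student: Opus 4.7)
The plan is to follow the characterisation strategy of Bass-Burdzy-Khoshnevisan (Theorem 5.2 of \cite{bass1994}), using the $L^1$-convergence supplied by Proposition \ref{prop:convergence L1} in place of their $L^2$-convergence of $\beta_a^\eps$. By a monotone class argument I can restrict to $f(x,B) = \mathbf{1}_A(x) F(B)$ with $A \subset D$ at positive distance from $x_0$ and $\partial D$, and $F$ a bounded, Skorokhod-continuous functional depending on the path only away from a small neighbourhood of the generic point $x \in A$. Writing the left-hand side of \eqref{eq:prop formula} as the limit of its $\eps$-approximation and applying Fubini, matters reduce to showing the pointwise convergence
\begin{equation}\label{eq:plan_pointwise}
\lim_{\eps \to 0} \sqrt{|\log \eps|}\,\eps^{\gamma^2/2}\,\EXPECT{x_0}{F(B)\,e^{\gamma \sqrt{L_{x,\eps}(\tau)/\eps}}}
= \sqrt{2\pi}\gamma\, R(x,D)^{\gamma^2/2}\, G_D(x_0,x)\,\expect_{\Q_{x,D}^{x_0,a}}\!\!\left[F(B)\right]
\end{equation}
for Lebesgue-a.e.\ $x \in A$, and concluding with dominated convergence, justified by the uniform $L^1$-bound from Proposition \ref{prop:first moment estimates exponential}.

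For fixed $x$, I would decompose the trajectory using the strong Markov property at $\sigma_\eps := \inf\{t \ge 0 : B_t \in \partial D(x,\eps)\}$ and $\sigma'_\eps := \sup\{t \le \tau : B_t \in \partial D(x,\eps)\}$ (well-defined on $\{\sigma_\eps < \tau\}$; the complement contributes the trivial weight $e^{\gamma \cdot 0} = 1$ and is negligible by Proposition \ref{prop:first moment estimates exponential}). This produces three pieces: an initial piece from $x_0$ to $\partial D(x,\eps)$, a middle piece containing all the mass of $L_{x,\eps}(\tau)$ and all excursions hitting $\partial D(x,\eps)$, and a final piece starting from $\partial D(x,\eps)$ conditioned never to return there before $\tau$. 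Lemma \ref{lem:hitting probability} gives the factor $\PROB{x_0}{\sigma_\eps < \tau} = (1+o(1))\,G_D(x_0,x)/|\log\eps|$, and the initial piece conditioned on $\sigma_\eps < \tau$ converges in distribution to a Brownian motion from $x_0$ conditioned to hit $x$ before $\tau$; symmetrically, the final piece converges to a Brownian motion from $x$ killed at $\tau$. These are precisely the first and third components of the trajectory under $\Q_{x,D}^{x_0,a}$.

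The heart of the argument is the identification of the middle piece, weighted by $e^{\gamma \sqrt{L_{x,\eps}(\tau)/\eps}}$ and suitably normalised, with the Poisson point process of excursions from $x$ of intensity $\mathrm{Leb}([0,a])\otimes \mu_{\mathrm{exc}}$. The mechanism proceeds in two steps. First, Lemmas \ref{lem:approx local times exponential}, \ref{lem:compute excursion measure} and \ref{lem:independence local times and exit point} show that, conditionally on the number $N_\eps$ of macroscopic excursions between $\partial D(x,\eps)$ and an enclosing contour of radius comparable to $R(x,D)$, $L_{x,\eps}(\tau)$ is approximately a sum of i.i.d.\ exponentials with mean $\sim 2\eps\log(R(x,D)/\eps)$, and the excursion traces are asymptotically independent of the exit data. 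Second, the same Gaussian change of variables $u = \sqrt{t} - \gamma\sqrt{\theta/2}$ used in Proposition \ref{prop:first moment estimates exponential} shows that the tilt by $e^{\gamma\sqrt{L_{x,\eps}/\eps}}$, normalised by $\sqrt{|\log \eps|}\,\eps^{\gamma^2/2}$, concentrates $L_{x,\eps}(\tau)/\eps$ near $\gamma^2(\log 1/\eps)^2$ and produces the explicit limiting constant $\sqrt{2\pi}\gamma\,R(x,D)^{\gamma^2/2}$ (this is exactly the first-moment computation, localised on the event controlled by the good event $G'_\eps(x,\eps_0,M)$). Under this tilted law the conditional count $N_\eps$ is asymptotically Poisson with mean $a|\log\eps|$, which after the natural time-change of an excursion local-time clock rescales to a PPP of intensity $\mathrm{Leb}([0,a])\otimes \mu_{\mathrm{exc}}$, and the individual excursion traces converge in law to the excursion measure of Brownian motion at $x$ in $D$.

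The main obstacle is carrying out this last convergence rigorously for the middle piece: one has to track the joint law of $N_\eps$, the excursion traces, their interleaving, and the weight, while only using the structural lemmas of Section \ref{sec:preliminaries} to identify the limit as the Poisson point process of \cite{bass1994}. Once the three pieces are in place, the strong Markov property reassembles them into $\Q_{x,D}^{x_0,a}$, yielding \eqref{eq:plan_pointwise} and hence \eqref{eq:prop formula}.
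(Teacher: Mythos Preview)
Your outline is a viable route in principle, but it is considerably more involved than the paper's argument, and the step you yourself flag as ``the main obstacle'' --- identifying the exponentially-weighted middle piece with the Poisson point process of excursions --- is precisely what the paper sidesteps. The key simplification is to work with $\nu_\eps^\gamma$ rather than $\mu_\eps^\gamma$. For $f(x,B)=\mathbf{1}_A(x)\mathbf{1}_C(B)$ the $\nu_\eps^\gamma$-integrand factors exactly as
\[
|\log\eps|\,\eps^{-\gamma^2/2}\,\PROB{x_0}{C\,\big\vert\, \tfrac{1}{\eps}L_{x,\eps}(\tau)\geq\gamma^2(\log\eps)^2}\,\PROB{x_0}{\tfrac{1}{\eps}L_{x,\eps}(\tau)\geq\gamma^2(\log\eps)^2},
\]
so the convergence of the conditional law to $\Q_{x,D}^{x_0,a}$ is \emph{literally} Proposition~5.1 of \cite{bass1994} (the paper observes it holds for general simply connected $D$ and without the $\log\log$ correction), and the remaining factor is handled by Proposition~\ref{prop:first moment estimates}. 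No re-derivation of excursion Poissonisation under an exponential tilt is needed. The identity $\mu^\gamma=\sqrt{2\pi}\gamma\,\nu^\gamma(\cdot\times(0,\infty))$ from Theorem~\ref{th:identification limits} then transfers the formula to $\mu^\gamma$ in one line.

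The paper also replaces your pointwise-plus-dominated-convergence step by the direct $L^1$ bound $\EXPECT{x_0}{|\nu^\gamma(A,(0,\infty))-\nu_\eps^\gamma(A,(0,\infty))|}\to 0$ from Proposition~\ref{prop:convergence L1}, which already controls $\big|\EXPECT{x_0}{\mathbf{1}_C(B)\,\nu^\gamma(A,(0,\infty))}-\EXPECT{x_0}{\mathbf{1}_C(B)\,\nu_\eps^\gamma(A,(0,\infty))}\big|$ since $|\mathbf{1}_C|\leq 1$. Your concentration heuristic via $G'_\eps(x,\eps_0,M)$ would ultimately reduce the exponential tilt back to a near-conditioning on $\{L_{x,\eps}(\tau)/\eps\approx\gamma^2(\log\eps)^2\}$ and feed into the same input from \cite{bass1994}, so the plan is not wrong --- it is simply redundant once Theorem~\ref{th:identification limits} is available.
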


\begin{proof}[Proof of Proposition \ref{prop:formula}]
Proposition 5.1 of \cite{bass1994} states that if the domain $D$ is the unit disc and if the starting point $x_0$ is the origin, for any $x \in D$, the distribution of Brownian motion conditioned on
\[
\left\{ \frac{1}{\eps} L_{x,\eps}(\tau) \geq \gamma^2 (\log \eps)^2 - 2 \abs{\log \eps} \log \abs{\log \eps} \right\}
\]
converges to $\Q_{x,D}^{x_0,a}$ as $\eps \to 0$. No restriction on the value of $\gamma$ is required here and their proof actually works in a general setting of a bounded open simply connected domain and a starting point $x_0 \in D$. Moreover, we notice that if we had conditioned rather on
\begin{equation}
\label{eq:proof th formula 1}
\left\{ \frac{1}{\eps} L_{x,\eps}(\tau) \geq \gamma^2 (\log \eps)^2 \right\},
\end{equation}
we would have obtained the same result: this can be seen in their equation (5.7) where the term $2 \abs{\log \eps} \log \abs{\log \eps}$ is killed by bigger order terms. Hence, we also have: the distribution of Brownian motion starting from $x_0$ and conditioned on \eqref{eq:proof th formula 1} converges to $\Q_{x,D}^{x_0,a}$ as $\eps \to 0$. We can now conclude as in \cite{bass1994}, Theorem 5.2: by standard monotone class argument, it is enough to prove \eqref{eq:prop formula} for $f$ of the form $f(x,B) = \mathbf{1}_A(x) \mathbf{1}_C(B)$ for some $A \in \Bc(D)$ and $C \in \Bc(C_*[0,\infty))$. In that case
\begin{align*}
& \abs{ \EXPECT{x_0}{ \int_{\R^2} f(x,B) \nu^\gamma(dx,(0,\infty)) } - \EXPECT{x_0}{ \int_{\R^2} f(x,B) \nu_\eps^\gamma(dx,(0,\infty)) } } \\
& ~~~~ \leq \EXPECT{x_0}{\mathbf{1}_C(B) \abs{ \nu^\gamma(A,(0,\infty)) - \nu_\eps^\gamma(A,(0,\infty))}}
\leq \EXPECT{x_0}{ \abs{ \nu^\gamma(A,(0,\infty)) - \nu_\eps^\gamma(A,(0,\infty))}}
\end{align*}
which goes to 0 as $\eps \to 0$ by Proposition \ref{prop:convergence L1}. Hence
\begin{align*}
& \EXPECT{x_0}{ \int_{\R^2} f(x,B) \nu^\gamma(dx,(0,\infty)) }
= \lim_{\eps \to 0} \EXPECT{x_0}{ \int_{\R^2} f(x,B) \nu_\eps^\gamma(dx,(0,\infty)) } \\
& ~~~~ = \lim_{\eps \to 0} \abs{ \log \eps } \eps^{-\gamma^2/2} \int_A \PROB{x_0}{C \left\vert \frac{1}{\eps} L_{x,\eps}(\tau) \geq \gamma^2 (\log \eps)^2 \right. } \PROB{x_0}{\frac{1}{\eps} L_{x,\eps}(\tau) \geq \gamma^2 (\log \eps)^2} dx \\
& ~~~~ = \int_A \Q_{x,D}^{x_0,a}(C) R(x,D)^{\gamma^2/2} G_D(x_0,x) dx
\end{align*}
by Proposition \ref{prop:first moment estimates}, \eqref{eq:prop first moment limit}. Recalling that Theorem \ref{th:identification limits} shows that $\mu^\gamma(dx) = \sqrt{2\pi} \gamma \nu^\gamma(dx,(0,\infty))$ $\prob_{x_0}$-a.s., this finishes to prove \eqref{eq:prop formula}.
\end{proof}

From Proposition \ref{prop:formula}, Corollary \ref{cor:mu=beta} is immediate:

\begin{proof}[Proof of Corollary \ref{cor:mu=beta}]
When $D$ is the unit disc and $x_0 = 0$, $R(x,D) = 1- \abs{x}^2$ and $G_D(x_0,x) = - \log \abs{x}$ (see \cite{lawler2005conformally}, Section 2.4). Hence by Proposition \ref{prop:formula} and by \cite{bass1994}, Theorem 5.2, $\mu^\gamma$ and $\sqrt{2\pi}\gamma \beta_a$ both satisfy \eqref{eq:prop formula}. Moreover, these two measures are measurable with respect to the Brownian path.
As noticed in \cite{bass1994}, Remark 5.2 (i), there is only one measure satisfying these two conditions implying that $\prob_{x_0}$-a.s. $\mu^\gamma = \sqrt{2\pi}\gamma \beta_a$.
\end{proof}

The proof of Corollary \ref{cor:mu=Mc} is quite similar:

\begin{proof}[Proof of Corollary \ref{cor:mu=Mc}]
For the same reason as before, it is enough to show that for all non-negative measurable function $f$,
\begin{equation}
\label{eq:proof thm mu=Mc}
\expect_{\prob_D^{x_0,z}} \left[ \int_{\R^2} f(x,B) \mu^\gamma(dx) \right] =
\sqrt{2\pi}\gamma \expect_{\prob_D^{x_0,z}} \left[ \int_{\R^2} f(x,B) \frac{H_D(x_0,z)}{H_D(x,z)} \Mc_\infty^a(dx) \right]
\end{equation}
and we can assume that $f$ is of the form $f(x,B) = \mathbf{1}_A(x) \mathbf{1}_C(B)$ for some $A \in \Bc(D)$ and $C \in \Bc(C_*[0,\infty))$. By \cite{AidekonHuShi2018}, Proposition 5.1, the right hand side term of \eqref{eq:proof thm mu=Mc} is equal to
\[
\sqrt{2\pi}\gamma \int_D \expect_{\Q_{x,D}^{x_0,z,a}} \left[ f(x,B) \right] R(x,D)^a G_D(x_0,x) dx.
\]
On the other hand, by \eqref{eq:relations different probability measures}, \eqref{eq:prop formula} and by dominated convergence theorem, the left hand side term of \eqref{eq:proof thm mu=Mc} is equal to 
\begin{align*}
& \lim_{r \to 0} \left. \expect_{x_0} \left[ \int_{\R^2} f(x,B) \mu^\gamma(dx) \indic{\abs{B_\tau - z} \leq r} \right] \right/ \PROB{x_0}{\abs{B_\tau - z} \leq r} \\
& = \lim_{r \to 0} \sqrt{2\pi}\gamma \int_A \frac{\Q_{x,D}^{x_0,a}(B \in A, \abs{B_\tau - z} \leq r)}{\PROB{x_0}{\abs{B_\tau - z} \leq r}} R(x,D)^{\gamma^2/2} G_D(x_0,x) dx \\
& = \sqrt{2\pi}\gamma \int_A \Q_{x,D}^{x_0,z,a}(B \in A) R(x,D)^{\gamma^2/2} G_D(x_0,x) dx.
\end{align*}
This shows \eqref{eq:proof thm mu=Mc} and concludes the proof.
\end{proof}

We finish this section by proving Corollary \ref{cor:properties mu}. We are basically going to collect properties in \cite{AidekonHuShi2018}.

\begin{proof}[Proof of Corollary \ref{cor:properties mu}]
Let $a = \gamma^2/2$. For any nice point $z \in \partial D$, the first three properties are satisfied by $\Mc_\infty^a$ under $\prob_D^{x_0,z}$ (see \cite{AidekonHuShi2018}, Proposition 5.4 and Theorem 1.1). By Corollary \ref{cor:mu=Mc} it is thus also the case for $\mu^\gamma$. To change the probability measure $\prob_D^{x_0,z}$ to $\prob_{x_0}$, we notice that
\[
\PROB{x_0}{ \cdot }
=
\int_{\partial D} \prob_D^{x_0,z}( \cdot ) H_D(x_0,z) dz.
\]
So if an event $E$ satisfies $\prob_D^{x_0,z}(E) = 0$ for all nice point $z \in \partial D$, $\PROB{x_0}{E} = 0$.
This concludes the proof of \textit{(i)-(iii)}. We now turn to the proof of the claim \textit{(iv)}. It is enough to show that for all non-negative measurable function $f$,
\begin{equation}
\label{eq:proof corollary}
\EXPECT{x_0}{ \int_{D'} f(x,B) \left( \mu^{\gamma,D} \circ \phi^{-1} \right)(dx) }
=
\EXPECT{\phi(x_0)}{ \int_{D'} f(x,B) \abs{\phi'(\phi^{-1}(x))}^{2+\gamma^2/2} \mu^{\gamma,D'}(dx) }.
\end{equation}
To help us to do the change of variable $z' = \phi(z)$ in the computations below, we recall that for any $y \in D$ and $z \in \partial D$, $H_{D'}(\phi(y), \phi(z)) = \abs{\phi'(z)}^{-1} H_D(y,z)$ (see \cite{lawler2005conformally}, Section 5.2).
By Corollary \ref{cor:mu=Mc}, and by the conformal invariance of $\Mc_\infty^a$ (\cite{AidekonHuShi2018}, Proposition 5.3), the left hand side term of \eqref{eq:proof corollary} is then equal to
\begin{align*}
& \int_{\partial D} dz ~H_D(x_0,z) \expect_{\prob_D^{x_0,z}} \left[ \int_{D'} f(x,B) \left( \mu^{\gamma,D} \circ \phi^{-1} \right)(dx) \right] \\
& = \sqrt{2\pi} \gamma \int_{\partial D} dz ~H_D(x_0,z)^2 \expect_{\prob_D^{x_0,z}} \left[ \int_{D'} \frac{f(x,B)}{H_D(\phi^{-1}(x),z)} \left( \Mc_\infty^a \circ \phi^{-1} \right)(dx) \right] \\
& = \sqrt{2\pi} \gamma \int_{\partial D} dz ~H_D(x_0,z)^2 \expect_{\prob_{D'}^{\phi(x_0),\phi(z)}} \left[ \int_{D'} \frac{f(x,B)}{H_D(\phi^{-1}(x),z)} \abs{\phi'(\phi^{-1}(x))}^{2+\gamma^2/2} \Mc_\infty^a(dx) \right] \\
& = \sqrt{2\pi} \gamma \int_{\partial D} dz ~\abs{\phi'(z)} H_{D'}(\phi(x_0),\phi(z)) \\
& ~~~~~~~~~\times \expect_{\prob_{D'}^{\phi(x_0),\phi(z)}} \left[ \int_{D'} f(x,B) \abs{\phi'(\phi^{-1}(x))}^{2+\gamma^2/2} \frac{H_{D'}(\phi(x_0),\phi(z))}{H_{D'}(x,\phi(z))} \Mc_\infty^a(dx) \right] \\
& = \int_{\partial D} dz ~\abs{\phi'(z)} H_{D'}(\phi(x_0),\phi(z)) \expect_{\prob_{D'}^{\phi(x_0),\phi(z)}} \left[ \int_{D'} f(x,B) \abs{\phi'(\phi^{-1}(x))}^{2+\gamma^2/2} \mu^{\gamma,D'}(dx) \right] \\
& = \int_{\partial D'} dz' ~ H_{D'}(\phi(x_0),z') \expect_{\prob_{D'}^{\phi(x_0),z'}} \left[ \int_{D'} f(x,B) \abs{\phi'(\phi^{-1}(x))}^{2+\gamma^2/2} \mu^{\gamma,D'}(dx) \right] \\
& = \expect_{\phi(x_0)} \left[ \int_{D'} f(x,B) \abs{\phi'(\phi^{-1}(x))}^{2+\gamma^2/2} \mu^{\gamma,D'}(dx) \right].
\end{align*}
This shows \eqref{eq:proof corollary}.
\end{proof}

\appendix

\section{Proof of Lemma \ref{lem:independence local times and exit point}}\label{sec:proof lemma independence}

We now prove Lemma \ref{lem:independence local times and exit point}.

\begin{proof}[Proof of Lemma \ref{lem:independence local times and exit point}]
To ease notations, we will denote $\tau_\eta := \tau_{\partial D(0,\eta)}, \tau_{\eta'} := \tau_{\partial D(0,\eta')}$ and for all $i=1 \dots n, L_{r_i} := L_{0,r_i}(\tau_\eta)$. Take $C \in \Bc \left( \partial D(0,\eta) \right)$. We will denote $\mathrm{Leb}(C)$ for the Lebesgue measure on $\partial D(0,\eta)$ of $C$. It is enough to show that
\begin{align}
\label{eq:proof indep1}
& \PROB{y}{B_{\tau_\eta} \in C ,\tau_{\eta'} < \tau_\eta, \forall i=1 \dots n, L_{r_i} \in T_i} \\
& ~~~~ = (1+o_{\eta' \to 0}(1)) \frac{\PROB{y}{B_{\tau_\eta} \in C, \tau_{\eta'} < \tau_\eta }}{\PROB{y}{\tau_{\eta'} < \tau_\eta}} \PROB{y}{\tau_{\eta'} < \tau_\eta, \forall i=1 \dots n, L_{r_i} \in T_i}. \nonumber
\end{align}
Moreover, establishing \eqref{eq:proof indep1} can be reduced to show that
\begin{align}
\label{eq:proof indep2}
& \PROB{y}{B_{\tau_\eta} \in C, \tau_{\eta'} < \tau_\eta, \forall i=1 \dots n, L_{r_i} \in T_i} \\
& ~~~~~~~ = (1+o_{\eta' \to 0}(1)) \frac{\mathrm{Leb}(C)}{2 \pi \eta} ~ \PROB{y}{\tau_{\eta'} < \tau_\eta, \forall i=1 \dots n, L_{r_i} \in T_i}. \nonumber
\end{align}
Indeed, applying \eqref{eq:proof indep2} to $T_i = [0,\infty)$ for all $i$ gives (which was already contained in Lemma \ref{lem:compute excursion measure})
\[
\PROB{y}{B_{\tau_\eta} \in C, \tau_{\eta'} < \tau_\eta }
= (1+o_{\eta' \to 0}(1)) \PROB{y}{\tau_{\eta'} < \tau_\eta} \frac{\mathrm{Leb}(C)}{2 \pi \eta},
\]
which combined with \eqref{eq:proof indep2} leads to \eqref{eq:proof indep1}.
Finally, after reformulation of \eqref{eq:proof indep2}, to finish the proof we only need to prove that
\begin{equation}
\label{eq:proof prop local times indep exit point}
\PROB{y}{B_{\tau_\eta} \in C \vert \tau_{\eta'} < \tau_\eta, \forall i=1 \dots n, L_{r_i} \in T_i}
= (1+o_{\eta' \to 0}(1)) \frac{\mathrm{Leb}(C)}{2 \pi \eta}.
\end{equation}

The skew-product decomposition of Brownian motion (see \cite{kallenberg2002foundations}, Corollary 16.7 for instance) tells us that we can write
\[
(B_t,t \geq 0) \overset{\mathrm{(d)}}{=} (\abs{B_t} e^{i \theta_t}, t \geq 0)
\mathrm{~with~}
(\theta_t, t \geq 0) = (w_{\sigma_t}, t \geq 0)
\]
where $(w_t, t \geq 0)$ is a one-dimensional Brownian motion independent of the radial part $(\abs{B_t}, t \geq 0)$ and $(\sigma_t, t \geq 0)$ is a time-change that is adapted to the filtration generated by $(\abs{B_t}, t \geq 0)$:
\[
\sigma_t = \int_0^t \frac{1}{\abs{B_s}^2} ds.
\]
In particular, under $\prob_y$, we have the following equality in law
\begin{equation}
\label{eq:proof prop local times indep exit point2}
\left( \tau_\eta, \abs{B_t}, t < \tau_\eta, B_{\tau_\eta} \right)
\overset{\mathrm{(d)}}{=}
\left( \tau_\eta, \abs{B_t}, t < \tau_\eta, \eta e^{i \theta_0 + i \varsigma \Nc} \right)
\end{equation}
where $\theta_0$ is the argument of $y$, $\Nc$ is a standard normal random variable independent of the radial part $(\abs{B_t}, t \geq 0)$ and
\[
\varsigma = \sqrt{\int_0^{\tau_\eta} \frac{1}{\abs{B_s}^2} ds}.
\]

We now investigate a bit the distribution of $e^{i \theta_0 + it \Nc}$ for some $t>0$. More precisely, we want to give a quantitative description of the fact that if $t$ is large, the previous distribution should approximate the uniform distribution on the unit disc. Using the probability density function of $\Nc$ and then using Poisson summation formula, we find that the probability density function $f_t(\theta)$ of $e^{i \theta_0 + it \Nc}$ at a given angle $\theta$ is given by
\begin{align*}
f_t(\theta) & = \frac{1}{\sqrt{2 \pi t}} \sum_{n \in \Z} e^{-(\theta - \theta_0 + 2\pi n)^2/(2t)} = \frac{1}{2 \pi} \sum_{p \in \Z} e^{ip(\theta - \theta_0)} e^{- p^2t/2} \\
& = \frac{1}{2 \pi} \left( 1 + 2 \sum_{p=1}^\infty \cos(p (\theta - \theta_0)) e^{- p^2 t / 2} \right).
\end{align*}
In particular, we can control the error in the approximation mentioned above by: for all $\theta \in [0,2 \pi]$,
\[
\abs{f_t(\theta) - \frac{1}{2\pi} } \leq \frac{1}{\pi} \sum_{p=1}^\infty e^{-p^2t/2} \leq C_1 \max \left( 1, \frac{1}{\sqrt{t}} \right) e^{-t/2}
\]
for some universal constant $C_1>0$.

We now come back to the objective \eqref{eq:proof prop local times indep exit point}. Using the identity \eqref{eq:proof prop local times indep exit point2} and because the local times $L_{r_i}$ are measurable with respect to the radial part of Brownian motion, we have by triangle inequality
\begin{align*}
& \abs{ \PROB{y}{B_{\tau_\eta} \in C \vert \tau_{\eta'} < \tau_\eta, \forall i=1 \dots n, L_{r_i} \in T_i} - \frac{\mathrm{Leb}(C)}{2 \pi \eta} } \\
& \leq
\EXPECT{y}{ \left. \int_0^{2\pi} \abs{ f_\varsigma(\theta) - \frac{1}{2 \pi} } \indic{ \eta e^{i \theta} \in C} d \theta \right\vert \tau_{\eta'} < \tau_\eta, \forall i =1 \dots n, L_{r_i} \in T_i } \\
& \leq
C_1 \frac{\mathrm{Leb}(C)}{\eta} \EXPECT{y}{ \left. \max \left( 1, \frac{1}{\sqrt{\varsigma}} \right) e^{-\varsigma/2} \right\vert \tau_{\eta'} < \tau_\eta, \forall i =1 \dots n, L_{r_i} \in T_i } \\
& \leq
C_1 \frac{\mathrm{Leb}(C)}{\eta} \EXPECT{y}{ \left. \max \left( 1, \frac{1}{\sqrt{\varsigma'}} \right) e^{-\varsigma'/2} \right\vert \tau_{\eta'} < \tau_\eta, \forall i =1 \dots n, L_{r_i} \in T_i }
\end{align*}
where
\[
\varsigma' := \sqrt{\int_{\tau_{r_n}}^{\tau_\eta} \frac{1}{\abs{B_s}^2} ds}.
\]
To conclude the proof, we want to show that
\[
\EXPECT{y}{ \left. \max \left( 1, \frac{1}{\sqrt{\varsigma'}} \right) e^{-\varsigma'/2} \right\vert \tau_{\eta'} < \tau_\eta, \forall i =1 \dots n, L_{r_i} \in T_i } = o_{\eta' \to 0}(1).
\]
By conditioning on the trajectory up to $\tau_{\eta'}$, it is enough to show that for any $T_i' \in \Bc([0,\infty)), i = 1 \dots n$, for any $z \in \partial D(0,\eta')$,
\begin{equation}
\label{eq:proof indep3}
\EXPECT{z}{ \left. \max \left( 1, \frac{1}{\sqrt{\varsigma'}} \right) e^{-\varsigma'/2} \right\vert \forall i =1 \dots n, L_{r_i} \in T_i' } = o_{\eta' \to 0}(1).
\end{equation}
In the following, we fix such $T_i'$ and such a $z$.

Consider the sequence of stopping times defined by: $\sigma_0^{(2)} :=0$ and for all $i = 1 \dots k' + k$,
\[
\sigma_i^{(1)} := \inf \left\{ t > \sigma_{i-1}^{(2)}: \abs{B_t} = \eta' e^{i-1/2} \right\}
\mathrm{~and~}
\sigma_i^{(2)} := \inf \left\{ t > \sigma_i^{(1)}: \abs{B_t} \in \{ \eta' e^{i}, \eta' e^{i-1} \} \right\}.
\]
We only keep track of the portions of trajectories during the intervals $\left[ \sigma_i^{(1)}, \sigma_i^{(2)} \right]$ by bounding from below $\varsigma'$ by
\[
(\varsigma')^2 \geq \sum_{i=1}^{k'-k} \frac{\sigma_i^{(2)} - \sigma_i^{(1)}}{(\eta' e^i)^2} =: L.
\]
By Markov property, conditioning on $\{ \forall i =1 \dots n, L_{r_i} \in T_i' \}$ impacts the variables $\sigma_i^{(2)} - \sigma_i^{(1)}$ only through $\abs{B_{\sigma_i^{(2)}}}$. But one has that there exists $c >0$ such that for all $i=1 \dots k' - k$,
\[
\EXPECT{z}{ \left. \frac{1}{ \sigma_i^{(2)} - \sigma_i^{(1)} } \right\vert \abs{B_{\sigma_i^{(2)}}}  } \leq \frac{c}{ (\eta' e^i)^2 }.
\]
Then for all $S>0$ we have by Markov's inequality and then by Jensen's inequality applied to $u \mapsto 1/u$:
\begin{align*}
\PROB{z}{L < S \vert \forall i =1 \dots n, L_{r_i} \in T_i'}
& \leq S \EXPECT{z}{\left. \frac{1}{L} \right\vert \forall i =1 \dots n, L_{r_i} \in T_i'}
\\
& \leq \frac{S}{(k'-k)^2} \sum_{i=1}^{k'-k} \EXPECT{z}{ \left. \frac{(\eta' e^i)^2}{\sigma_i^{(2)} - \sigma_i^{(1)}} \right\vert \forall i =1 \dots n, L_{r_i} \in T_i' }
\\
& \leq c \frac{S}{k'-k}.
\end{align*}
In particular, $\PROB{z}{L < S \vert \forall i =1 \dots n, L_{r_i} \in T_i'} \leq o_{\eta' \to 0}(1) S$ and it implies that
\begin{align*}
& \EXPECT{z}{ \left. \max \left( 1, \frac{1}{\sqrt{\varsigma'}} \right) e^{-\varsigma'/2} \right\vert \forall i =1 \dots n, L_{r_i} \in T_i' } \\
& \leq
\EXPECT{z}{ \left. \max \left( 1, \frac{1}{L^{1/4}} \right) e^{-\sqrt{L}/2} \right\vert \forall i =1 \dots n, L_{r_i} \in T_i' } \\
& \leq \sum_{p = -\infty}^\infty \max \left( 1,  2^{(p+1)/4} \right) e^{- 2^{-(p+1)/2} / 2} \PROB{z}{2^{-p-1} \leq L < 2^{-p} \vert \forall i =1 \dots n, L_{r_i} \in T_i' } \\
& = o_{\eta' \to 0}(1) \sum_{p = -\infty}^\infty 2^{-p} \max \left( 1,  2^{(p+1)/4} \right) e^{- 2^{-(p+1)/2} / 2} = o_{\eta' \to 0}(1).
\end{align*}
This shows \eqref{eq:proof indep3} which finishes the proof.
\end{proof}

\section{Proofs of lemmas on the zero-dimensional Bessel process}\label{sec:Appendix}

This appendix is dedicated to the proofs of the properties we have collected on the zero-dimensional Bessel process throughout the article. Because those properties are fairly classical, we will sometimes be brief.
Recall that we denote by $\brob_r$ the law under which $(R_s)_{s \geq 0}$ is a zero-dimensional Bessel process starting from $r$.

In this section, we will denote $q_s(x,y)$ the transition probability of $(R_s)_{s \geq 0}$. It satisfies the following explicit formula (see Proposition 2.5 of \cite{LawlerBessel} for instance)
\begin{equation}\label{eq:Bessel transition probability}
q_s(x,y) = \frac{x}{s} e^{-\frac{x^2 + y^2}{2s}} I_1 \left( \frac{xy}{s} \right)
\end{equation}
where $I_1$ is a modified Bessel function of the first kind:
\begin{equation}
\label{eq:def Bessel I_1}
I_1(u) = \sum_{n \geq 0} \frac{1}{n!(n+1)!} \left( \frac{u}{2} \right)^{2m+1}.
\end{equation}
We also recall (see \cite{ParisBessel}) that for all $v > u >0$,
\begin{equation}\label{eq:I_1 inequality}
I_1(v) \leq \frac{v}{u}e^{v-u} I_1(u)
\end{equation}
and that $I_1$ has the well-known asymptotic form:
\begin{equation}
\label{eq:I_1 asymptotic}
I_1(u) \underset{u \to \infty}{\sim} \frac{1}{\sqrt{2 \pi u}} e^u.
\end{equation}

We start by proving Lemma \ref{lem:Bessel tail asymptotic}.

\begin{proof}[Proof of Lemma \ref{lem:Bessel tail asymptotic}]
Take $t, \lambda$ and $r$ as in the statement of the lemma.
We have
\[
\PROB{r}{R_t \geq \lambda} = \int_{\lambda}^\infty q_t(r,x)dx.
\]
For all $x \geq \lambda$, we have $r x / t \geq a$.
Hence, by \eqref{eq:Bessel transition probability} and \eqref{eq:I_1 asymptotic}, there exists $C = C(a) >0$ such that for all $x \geq \lambda$,
\[
q_t(r,x) \leq C \frac{r}{t} e^{- \frac{r^2 + x^2}{2t}} \frac{1}{\sqrt{r x /t}} e^{\frac{rx}{s}} \leq C \sqrt{\frac{r}{\lambda t}} e^{-\frac{(x-r)^2}{2t}}.
\]
Using tail estimates of normal random variable, this leads to
\[
\PROB{r}{R_t \geq \lambda} \leq C' \frac{\sqrt{r}}{\lambda} e^{- \frac{(\lambda-r)^2}{2t}} \leq C' \sqrt{r} e^{\frac{\lambda r}{t}} \frac{1}{\lambda} e^{-\frac{\lambda^2}{2t}}.
\]
This proves the first claim.
The second claim follows from the first one and from
\[
\EXPECTB{r}{e^{\gamma R_t}} \leq e^{\frac{\gamma^2t}{4}} + \int_{e^{\gamma^2 t/4}}^\infty \BROB{r}{R_t \geq \frac{\log \mu}{\gamma}} d \mu.
\]
We omit the details.
\end{proof}

We now move on to the proof of Lemma \ref{lem:Bessel first moment}.

\begin{proof}[Proof of Lemma \ref{lem:Bessel first moment}]
For ease of notation, we will assume that $\widetilde{b} = 0$ in the proof.
We are going to show that there exist $c = c(\gamma, \widetilde{\gamma})>0$ and $s_0 = s_0(\gamma, \widetilde{\gamma}, r_0, b) > 0$ such that for all $r \in (0, r_0)$ and $t > s \geq s_0$,
\begin{align}
\label{eq:proof Bessel process close to line1}
\BROB{r}{R_t \geq \gamma t+b, R_s \geq \widetilde{\gamma}s} & \leq \frac{1}{c} e^{-cs} \BROB{r}{R_t \geq \gamma t+b}, \\
\label{eq:proof Bessel process close to line2}
\EXPECTB{r}{ e^{\gamma R_t} \indic{R_s \geq \widetilde{\gamma} s} } & \leq \frac{1}{c} e^{-cs} \EXPECTB{r}{ e^{\gamma R_t} }.
\end{align}
Lemma \ref{lem:Bessel first moment} is then an easy consequence of these estimates.

Define $\eps = (\widetilde{\gamma}-\gamma)/4>0$. Assume that $t$ is large enough so that $\eps t >b$. Take $s < t$ and $\lambda < (\gamma + \eps)t$. We are going to show that
\begin{equation}
\label{eq:proof lemma Bessel}
\BROB{r}{R_t \in [\lambda,(\gamma+\eps)t], R_s \geq \widetilde{\gamma}s} \leq \frac{1}{c} e^{-cs} \BROB{r}{R_t \geq \lambda}
\end{equation}
for some $c = c(\gamma,\widetilde{\gamma})>0$. We will then see that we can conclude with a proof of \eqref{eq:proof Bessel process close to line1} and \eqref{eq:proof Bessel process close to line2} quite quickly.
We have:
\begin{align*}
\BROB{r}{R_t \in [\lambda,(\gamma+\eps)t], R_s \geq \widetilde{\gamma} s}
=
\frac{\widetilde{\gamma}}{\gamma} \int_{\gamma s}^\infty q_s \left( r, \widetilde{\gamma} x/\gamma \right) \BROB{\widetilde{\gamma}x/\gamma}{R_{t-s} \in [\lambda,(\gamma+\eps)t]} dx.
\end{align*}
But by \eqref{eq:I_1 inequality}
\begin{align*}
\BROB{\widetilde{\gamma}x/\gamma}{R_{t-s} \in [\lambda,(\gamma+\eps)t]}
& = \int_{\lambda}^{(\gamma+\eps)t} \frac{\widetilde{\gamma}x/\gamma }{t-s} \exp \left( - \frac{(\widetilde{\gamma}x/\gamma)^2 + y^2}{2(t-s)} \right) I_1 \left( \frac{\widetilde{\gamma}xy/\gamma }{t-s} \right) dy\\
\leq \left( \frac{\widetilde{\gamma}}{\gamma} \right)^2 \int_{\lambda}^{(\gamma+\eps)t} & \frac{x}{t-s} \exp \left( - \frac{(\widetilde{\gamma}x/\gamma)^2 + y^2}{2(t-s)} + \left( \frac{\widetilde{\gamma}}{\gamma} - 1 \right) \frac{xy}{t-s} \right) I_1 \left( \frac{xy}{t-s} \right) dy
\end{align*}
and
\begin{equation}\label{eq:proof Bessel probability density}
q_s(r,\widetilde{\gamma}x/\gamma) \leq \frac{\widetilde{\gamma}}{\gamma} \frac{r}{s} \exp \left( - \frac{r^2 + (\widetilde{\gamma}x/\gamma)^2}{2s} + \left( \frac{\widetilde{\gamma}}{\gamma} - 1 \right) \frac{r x}{s} \right) I_1 \left( \frac{r x}{s} \right).
\end{equation}
After elementary simplifications, we find that $\BROB{r}{R_t \in [\lambda,(\gamma+\eps)t], R_s \geq \widetilde{\gamma}s}$ is at most
\begin{align*}
\left( \frac{\widetilde{\gamma}}{\gamma} \right)^4 \int_{\gamma s}^\infty dx ~q_s(r,x) \int_{\lambda}^{(\gamma+\eps)t} dy ~q_{t-s}(x,y) \exp \left( - \frac{(\widetilde{\gamma}/\gamma-1)x}{s(t-s)} \left( \frac{ \widetilde{\gamma}/\gamma + 1}{2} xt - ys - r(t-s) \right) \right).
\end{align*}
We have chosen $\eps < (\widetilde{\gamma}-\gamma)/2$ so that for all $x \geq \gamma s$ and $y \in [\lambda, (\gamma+\eps)t]$,
\[
\frac{ \widetilde{\gamma}/\gamma + 1}{2} xt - ys \geq cts
\]
for some $c=c(\gamma,\widetilde{\gamma})>0$. Hence if $s$ and $t$ are large enough (depending on $\gamma,\widetilde{\gamma}$ and $r_0$),
\[
\frac{ \widetilde{\gamma}/\gamma + 1}{2} xt - ys - r(t-s) \geq c'ts
\]
for some $c'=c'(\gamma,\widetilde{\gamma})>0$. This implies \eqref{eq:proof lemma Bessel}.

This finishes almost entirely the proof. Indeed, to prove \eqref{eq:proof Bessel process close to line1} we use \eqref{eq:proof Bessel process close to line2} with $\lambda = \gamma t + b$ and we notice that \eqref{eq:proof Bessel probability density} (used with $s = t$ and $\widetilde{\gamma} = \gamma+\eps$) implies that
\begin{align*}
\BROB{r}{R_t \geq (\gamma+\eps)t}
& \leq \left( \frac{\gamma+\eps}{\gamma} \right)^2 \int_{\gamma t}^\infty q_t(r,x) \exp \left( - \frac{((\gamma+\eps)/\gamma - 1)x}{t} \left( \frac{(\gamma+\eps)/\gamma + 1}{2} x -r \right) \right) \\
& \leq \frac{1}{c} e^{-ct} \BROB{r}{R_t \geq \gamma t}
\end{align*}
for some $c=c(\gamma,\widetilde{\gamma})>0$ and if $t$ is large enough. This shows \eqref{eq:proof Bessel process close to line1}.
For \eqref{eq:proof Bessel process close to line2}, we see that \eqref{eq:proof lemma Bessel} gives
\begin{align*}
\EXPECTB{r}{e^{\gamma R_t} \indic{R_s \geq \widetilde{\gamma} s} \indic{R_t \leq (\gamma + \eps)} }
& = \int_0^\infty \BROB{r}{ e^{\gamma R_t} \indic{R_s \geq \widetilde{\gamma} s} \indic{R_t \leq (\gamma + \eps)} \geq \lambda} d \lambda \\
& = \int_0^\infty \BROB{r}{ \frac{\log \lambda}{\gamma} \leq R_t \leq (\gamma+\eps)t, R_s \geq  \widetilde{\gamma} s} d \lambda \\
& \leq \frac{1}{c} e^{-cs}  \int_0^\infty \BROB{r}{\frac{\log \lambda}{\gamma} \leq R_t \leq (\gamma+\eps)t} d \lambda
\leq \frac{1}{c} e^{-cs} \EXPECTB{r}{e^{\gamma R_t}  }.
\end{align*}
On the other hand, we have by \eqref{eq:lem Bessel expectation exponential}
\[
\EXPECTB{r}{e^{\gamma R_t} \indic{R_t \geq (\gamma + \eps)} } \leq \frac{1}{c} e^{-ct} \EXPECTB{r}{e^{\gamma R_t}  }
\]
which concludes the proof of \eqref{eq:proof Bessel process close to line2}. This finishes the proof.
\end{proof}

We finish this appendix by proving Lemma \ref{lem:Bessel Cauchy}.

\begin{proof}[Proof of Lemma \ref{lem:Bessel Cauchy}]
The sum of $n$ independent zero-dimensional squared Bessel processes is still a zero-dimensional squared Bessel process. Hence, by conditioning on $(R_s^{(i)}, s \leq s_0), i = 1 \dots n$, we have
\begin{align*}
& \Brob{ \left. R_t \geq \gamma t+b, \forall s \in [|1,s_0|], R_s \in A_s, \forall s \in [|s_0, t|], R_s \leq \widetilde{\gamma} s + \widetilde{b} \right\vert \forall i=1 \dots n, R_{s_0}^{(i)} >0 } \\
& = \expectB \Bigg[ \BROB{\sqrt{ \sum_{i=1}^n \left( R^{(i)}_{s_0} \right)^2 }}{R_{t-s_0} \geq \gamma t+b, \forall s \in [|1, t-s_0|], R_s \leq \widetilde{\gamma} (s + s_0) + \widetilde{b}} \\
& ~~~~~~~~~~~~~~~~~~~~~~~~~~~~~~~~~~~~~~~~~~~~~~~~~~~~~~~\times \indic{\forall s \in [|1,s_0|], R_s \in A_s}  \Bigg\vert \forall i=1 \dots n, R_{s_0}^{(i)} >0 \Bigg].
\end{align*}
Now we focus on the asymptotic of
\[
\BROB{r}{R_{t-s_0} \geq \gamma t+b, \forall s \in [|1, t-s_0|], R_s \leq \widetilde{\gamma} (s + s_0) + \widetilde{b}}
\]
for a given $r \geq 0$. Take $\eps>0$. By \eqref{eq:lem Bessel first moment good event} of Lemma \ref{lem:Bessel first moment}, there exists $s_0' >0$ such that for all $t \geq s_0' + s_0$,
\begin{align*}
0 & \leq \BROB{r}{R_{t-s_0} \geq \gamma t+b, \forall s \in [|1, s_0'|], R_s \leq \widetilde{\gamma} (s + s_0) + \widetilde{b}} \\
& ~~~~~~ - \BROB{r}{R_{t-s_0} \geq \gamma t+b, \forall s \in [|1, t-s_0|], R_s \leq \widetilde{\gamma} (s + s_0) + \widetilde{b}} \leq \eps.
\end{align*}
But \begin{align*}
& \BROB{r}{R_{t-s_0} \geq \gamma t+b, \forall s \in [|1, s_0'|], R_s \leq \widetilde{\gamma} (s + s_0) + \widetilde{b}} \\
& = \EXPECTB{r}{\indic{\forall s \in [|1, s_0'|], R_s \leq \widetilde{\gamma} (s + s_0) + \widetilde{b}} \BROB{R_{s_0'}}{R_{t - s_0 - s_0'} \geq \gamma t + b}}.
\end{align*}
We could have done the same reasoning with the expectation of $e^{\gamma R_t} \indic{\abs{R_t - \gamma t} \leq M\sqrt{t}}$: the only difference is that we would have to replace
\[
\BROB{R_{s_0'}}{R_{t - s_0 - s_0'} \geq \gamma t + b}
\mathrm{~by~}
\EXPECTB{R_{s_0'}}{e^{\gamma R_{t-s_0-s_0'}} \indic{\abs{R_{t-s_0-s_0'} - \gamma (t-s_0-s_0')} \leq M\sqrt{t-s_0-s_0'}} }
\]
(see also claim \eqref{eq:lem Bessel first moment good event exponential} of Lemma \ref{lem:Bessel first moment}). To conclude the proof, we thus only need to show that for a given $r \geq 0$ and $t_0 \geq 0$,
\[
t e^{\frac{\gamma^2}{2}t} \BROB{r}{R_{t - t_0} \geq \gamma t + b}
\mathrm{~and~}
\frac{1}{\gamma \sqrt{2\pi}}\sqrt{t} e^{- \frac{\gamma^2}{2}t} \EXPECTB{r}{e^{\gamma R_{t-t_0}}\indic{\abs{R_{t-t_0} - \gamma (t-t_0)} \leq M\sqrt{t-t_0}}}
\]
converge and that the limits satisfy \eqref{eq:lem Bessel Cauchy}. This is a simple computation:
\begin{align*}
\BROB{r}{R_t \geq \gamma t + b} = \frac{r}{t} e^{-\frac{r^2}{2t}} \int_{\gamma t + b}^\infty e^{-\frac{x^2}{2t}} I_1 \left( \frac{r x}{t} \right) dx
\underset{t \to \infty}{\sim}
\frac{r I_1(\gamma r)}{\gamma} e^{-b \gamma} \frac{1}{t} e^{-\frac{\gamma^2}{2} t}
\end{align*}
implying that
\[
t e^{\frac{\gamma^2}{2}t} \BROB{r}{R_{t - t_0} \geq \gamma t + b}
\underset{t \to \infty}{\sim}
\frac{r I_1(\gamma r)}{\gamma} e^{-b \gamma - \frac{ \gamma^2}{2} t_0 },
\]
whereas
\begin{align*}
\EXPECTB{r}{e^{\gamma R_t} \indic{\abs{R_t - \gamma t} \leq M \sqrt{t}} } 
& = \frac{r}{t} e^{-\frac{r^2}{2t}} \int_{\gamma t - M \sqrt{t}}^{\gamma t + M \sqrt{t}} e^{-\frac{x^2}{2t} + \gamma x} I_1 \left( \frac{rx}{t} \right) dx \\
& \underset{t \to \infty}{\sim}
r I_1(\gamma r) \frac{1}{\sqrt{t}} e^{\frac{\gamma^2}{2} t}
\int_{-M}^M e^{-y^2/2} dy
\end{align*}
implying that 
\[
\frac{1}{\gamma \sqrt{2\pi}}\sqrt{t} e^{- \frac{\gamma^2}{2}t} \EXPECTB{r}{e^{\gamma R_{t-t_0}}}
\underset{t \to \infty}{\sim}
\frac{r I_1(\gamma r)}{\gamma} e^{- \frac{ \gamma^2}{2} t_0 } (1-p(M))
\]
for $p(M) = 1 - \int_{-M}^M e^{-y^2/2} dy/ \sqrt{2\pi}$.
This concludes the proof.
\end{proof}

\section{Continuity of the local times. Proof of Proposition \ref{prop:continuity}}\label{sec:Appendix continuity local times}

Consider any norm $\norme{\cdot}$ on $\R^2 \times \R$.
By Kolmogorov's continuity theorem, to prove Proposition \ref{prop:continuity}, it is enough to show:

\begin{lemma}\label{lem:continuity local times}
For all $p \geq 1$ and $\eta > \eta' >0$, there exists $C = C(p,\eta, \eta')>0$ such that for all $x,y \in D$ and $0 < \eps, \delta < \eta'$ such that $D(x,\eta) \cup D(y, \eta) \subset D$,
\begin{equation}
\label{eq:prop continuity}
\EXPECT{x_0}{ \abs{ L_{x,\eps}(\tau) - L_{y,\delta}(\tau)}^p}
\leq C \norme{ (x,\eps) - (y,\delta) }^{p/3} \abs{\log \norme{ (x,\eps) - (y,\delta) } }^p.
\end{equation}
\end{lemma}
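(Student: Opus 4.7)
The plan is to apply Kolmogorov's continuity criterion on the three-dimensional parameter space $\{(x,\eps) : D(x,\eps) \subset D\}$: since we need Hölder exponent arbitrarily close to $1/3$, it suffices to establish the moment bound \eqref{eq:prop continuity} for every integer $p \geq 1$ (the logarithmic loss is absorbed by letting $p \to \infty$). The entire proof therefore reduces to bounding the $L^p$-norm of the difference $L_{x,\eps}(\tau) - L_{y,\delta}(\tau)$.

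The main tool is an explicit mixed moment formula for the circle local times: for centers $x_1,\dots,x_p \in D$ and radii $\eps_1,\dots,\eps_p > 0$,
\begin{equation*}
\EXPECT{x_0}{\prod_{i=1}^p L_{x_i,\eps_i}(\tau)} = \frac{1}{\pi^p} \sum_{\sigma\in S_p} \int_{\prod_i \partial D(x_i,\eps_i)} G_D(x_0,z_{\sigma(1)}) \prod_{j=1}^{p-1} G_D(z_{\sigma(j)},z_{\sigma(j+1)}) \prod_i d\sigma_i(z_i),
\end{equation*}
which I derive by iterating the strong Markov property on the occupation-time approximation $L_{x,\eps}(\tau) = \lim_{r \to 0} \frac{1}{2r}\int_0^\tau \mathbf{1}_{|B_s-x|\in [\eps-r,\eps+r]} ds$, using at each step the identity $\expect_z[L_{y,r}(\tau)] = \frac{1}{\pi} \int_{\partial D(y,r)} G_D(z,z') d\sigma(z')$ (a restatement of Lemma \ref{lem:Green function estimate}). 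By the triangle inequality, it suffices to bound the moments in two separate regimes: (a) fixed center $x$ with differing radii $\eps,\delta$, and (b) fixed radius with differing centers $x,y$. In each regime I expand
$$\EXPECT{x_0}{(L_{x_1,r_1}(\tau)-L_{x_2,r_2}(\tau))^p} = \sum_{k=0}^p (-1)^k \binom{p}{k} \EXPECT{x_0}{L_{x_1,r_1}^{p-k} L_{x_2,r_2}^{k}}$$
and plug in the moment formula, splitting $G_D(z,z') = -\log|z-z'| + \log R(x,D) + u(z,z')$ from \eqref{eq:Green function asymptotic}. The smooth part contributes a Lipschitz-in-parameters remainder (handled by direct bounds), while the singular $-\log|z-z'|$ contributions are evaluated via the averaging identity $\frac{1}{2\pi r}\int_{\partial D(a,r)} (-\log|z-z'|) d\sigma(z) = -\log \max(r,|z'-a|)$ (which follows from harmonicity of $-\log|\cdot-z'|$ off $z'$). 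In regime (a) this identity produces exact cancellations in the binomial sum, yielding a bound of the form $C_p|\eps-\delta|^{p/2}|\log|\eps-\delta||^p$, comfortably better than required.

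The main obstacle lies in regime (b), where the circles $\partial D(x,r)$ and $\partial D(y,r)$ intersect transversally when $h := |x-y| < 2r$; the averaging identity then produces a piecewise-defined quantity $-\log\max(r,|z'-x|)$ that depends on whether $z' \in \partial D(y,r)$ is inside or outside $D(x,r)$. To control this, I decompose $\partial D(y,r)$ into the angular sectors where $|z'-x| \leq r$ and $|z'-x| > r$, expand $\log|z'-x|$ in $h/r$ on each sector, and carefully track the residual contribution from sectors of angular width of order $\sqrt{h/r}$ near the two intersection points. Balancing the three different sources of error (the lens-shaped intersection region, the perturbation of $G_D(x_0,\cdot)$, and the perturbation of $u(z,z')$) produces the fractional exponent $p/3$ with logarithmic corrections; the precise combinatorial bookkeeping over the $p!$ permutations in the moment formula, and the verification that each factor in the iterated Green function integral contributes only $h^{1/3}\log$ after the cancellations, is the most delicate step of the argument.
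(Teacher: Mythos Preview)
Your approach via the Kac moment formula is genuinely different from the paper's and worth comparing. The paper proceeds in two steps that avoid any direct analysis of intersecting circles. For fixed center and varying radius (your regime~(a)), it decomposes the path into excursions from $\partial D(x,\eps)$ to $\partial D(x,\eta)$; during each excursion the process $r \mapsto L_{x,r}/r$ is a zero-dimensional squared Bessel process by \eqref{eq:prop local times and Bessel process}, and a direct SDE estimate (Lemma~\ref{lem:Bessel continuity}) yields $\EXPECT{x_0}{\abs{L_{x,\eps}-L_{x,\delta}}^p} \leq C\abs{\eps-\delta}^{p/2}\abs{\log\abs{\eps-\delta}}^p$, matching your claimed bound for that regime. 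For fixed radius and varying center (your regime~(b)), the paper sidesteps the singular geometry entirely by a monotonicity trick: with $h=\abs{x-y}$ and $\alpha>0$ a free parameter, the quantities $I_x := \int_{\eps-\alpha-h}^{\eps+\alpha+h} L_{x,r}\,dr$ and $I_y := \int_{\eps-\alpha}^{\eps+\alpha} L_{y,r}\,dr$ are occupation times of \emph{nested} annuli, so $I_x \geq I_y$ almost surely. One then bounds $\{L_{y,\eps}-L_{x,\eps}\}_+^p$ by three terms, two of which compare a local time to its annular average (reducing to Step~1, hence of size $\alpha^{p/2}\abs{\log\alpha}^p$) and one of which is $(h/\alpha)^p$ from the inclusion. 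The choice $\alpha=h^{2/3}$ balances these and produces the exponent $p/3$.

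Your regime~(b), by contrast, has a genuine gap. You assert that ``balancing the three different sources of error produces the fractional exponent $p/3$'' and that ``each factor in the iterated Green function integral contributes only $h^{1/3}\log$ after the cancellations,'' but neither claim is carried out, and it is not clear either is true. The binomial expansion of $\EXPECT{x_0}{(L_{x,r}-L_{y,r})^p}$ via the permutation formula does not factorize: each of the $p!$ terms is an iterated integral of a product $\prod_j G_D(z_{\sigma(j)},z_{\sigma(j+1)})$ with the $z_i$ ranging over two transversally intersecting circles, and the logarithmic singularities near the intersection locus couple all $p$ variables simultaneously. Extracting a clean power of $h$ from the alternating sum would require either a global cancellation identity or an inductive reduction, neither of which you have stated. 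Your ``angular width $\sqrt{h/r}$'' is also suspect: for $z'\in\partial D(y,r)$ near an intersection point one has $\abs{z'-x}-r \approx h\cos\theta$, which varies \emph{linearly} in the angle with slope of order $h$, so the natural transition scales do not obviously lead to $h^{1/3}$. The paper's annulus-inclusion argument is designed precisely to bypass this difficulty; if you wish to push the Green-function route through, regime~(b) needs a complete argument rather than a sketch.
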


Let us emphasise that the previous lemma considers the local times $L_{x,\eps}(\tau)$ rather than their normalised versions $L_{x,\eps}(\tau)/\eps$.
Before proving this lemma, we collect one more time a property on the zero-dimensional Bessel process:

\begin{lemma}\label{lem:Bessel continuity}
For all integer $p \geq 1$, there exists $C=C(p)>0$ such that for all $0<s<1$ and for all starting point $r>0$,
\begin{equation}
\label{eq:lem Bessel continuity}
\EXPECTB{r}{\abs{R_s^2 - r^2}^p} \leq C s^{p/2} \max (1, r^{2p}).
\end{equation}
\end{lemma}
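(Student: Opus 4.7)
The plan is to exploit the SDE for the squared Bessel process of dimension zero, namely $dX_s = 2\sqrt{X_s}\,dW_s$ where $X_s := R_s^2$. Equivalently, one has the martingale representation
\[
R_s^2 - r^2 = 2\int_0^s R_u\, dW_u.
\]
Assuming $p \geq 2$, the Burkholder--Davis--Gundy inequality gives
\[
\EXPECTB{r}{\abs{R_s^2 - r^2}^p} \leq C_p \EXPECTB{r}{\left(4\int_0^s R_u^2\, du\right)^{p/2}},
\]
and Jensen's inequality, applied to the probability measure $du/s$ on $[0,s]$ together with the convex function $x \mapsto x^{p/2}$, yields $\left(\int_0^s R_u^2\,du\right)^{p/2} \leq s^{p/2-1}\int_0^s R_u^p\, du$. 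Thus the problem is reduced to a pointwise moment bound on $\EXPECTB{r}{R_u^p}$ for $u \in [0,s] \subset [0,1]$. The case $p=1$ will be handled by the Davis inequality together with Jensen for the concave function $x \mapsto \sqrt{x}$, which gives directly $\EXPECTB{r}{|R_s^2 - r^2|} \leq C\sqrt{s}\,r$.

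To control $\EXPECTB{r}{R_u^{2k}} = \EXPECTB{r}{X_u^k}$ for integers $k \geq 1$, apply It\^o's formula to $X_u^k$. Since the drift term vanishes for zero-dimensional squared Bessel and $d\langle X\rangle_u = 4X_u\,du$, one obtains after taking expectations the recursion
\[
\frac{d}{du}\EXPECTB{r}{X_u^k} = 2k(k-1)\,\EXPECTB{r}{X_u^{k-1}}, \qquad \EXPECTB{r}{X_0^k} = r^{2k}.
\]
An immediate induction starting from $\EXPECTB{r}{X_u} = r^2$ shows that $\EXPECTB{r}{X_u^k}$ is a polynomial in $u$ of degree at most $k-1$ of the form $\sum_{j=0}^{k-1}a_{k,j}\,r^{2(k-j)}u^j$ with positive coefficients $a_{k,j}$. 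Since $r^{2(k-j)} \leq \max(1,r^{2k})$ for every $j \in \{0,\dots,k-1\}$ (by splitting into the cases $r \geq 1$ and $r \leq 1$) and $u^j \leq 1$ for $u \leq 1$, this gives $\EXPECTB{r}{R_u^{2k}} \leq C_k \max(1,r^{2k})$. Odd moments are then controlled by Cauchy--Schwarz, yielding $\EXPECTB{r}{R_u^p} \leq C_p\max(1,r^p)$ for every integer $p \geq 1$.

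Plugging this into the BDG bound, one obtains
\[
\EXPECTB{r}{\abs{R_s^2 - r^2}^p} \leq C_p\, s^{p/2-1}\cdot s \cdot \max(1,r^p) = C_p\, s^{p/2}\max(1,r^p),
\]
which is in fact stronger than the stated bound because $\max(1,r^p) \leq \max(1,r^{2p})$ for all $r \geq 0$. The only mildly delicate point is to justify that the stochastic integral $\int_0^s R_u\, dW_u$ is a true martingale rather than a mere local martingale, so that BDG applies without localization; this follows immediately from the $p=1$ case of the moment bound (which shows $\EXPECTB{r}{\int_0^s R_u^2\,du} \leq r^2 s < \infty$), and more generally the inductive scheme above can itself be run under standard localization to dispose of any integrability concerns. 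I expect no genuine obstacle beyond this routine bookkeeping.
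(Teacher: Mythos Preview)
Your argument is correct and in fact yields the slightly sharper bound $C s^{p/2}\max(1,r^{p})$. The route, however, is genuinely different from the paper's. The paper does not use BDG or moment recursions at all: instead it derives an exponential tail estimate. Writing $T_\lambda = \inf\{t: |R_t^2 - r^2| > \lambda\}$ and using that $\langle R^2\rangle_{T_\lambda} \leq 4(r^2+\lambda)T_\lambda$, it applies Markov's inequality to the exponential martingale $\exp\bigl(u(R_{t\wedge T_\lambda}^2-r^2) - \tfrac{u^2}{2}\langle R^2\rangle_{t\wedge T_\lambda}\bigr)$ with an optimised $u$, obtaining
\[
\BROB{r}{|R_s^2 - r^2| > \lambda} \leq 2\exp\left(-\frac{\lambda^2}{8(r^2+\lambda)s}\right),
\]
and then integrates this tail bound to recover the $p$-th moment. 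Your BDG/It\^o-recursion approach is more direct for the moment statement actually needed, handles all integer $p$ in one sweep (modulo the separate Davis case), and loses nothing since the paper never uses the intermediate tail bound elsewhere. The paper's approach, by contrast, produces a sub-Gaussian/sub-exponential tail as a byproduct, which is a stronger pointwise statement but is not exploited downstream.
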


\begin{proof}[Proof of Lemma \ref{lem:Bessel continuity}]
Take $\lambda>0$. We are going to bound from above $\BROB{r}{ \abs{R_s^2 - r^2} > \lambda}$. Denoting $T_\lambda := \inf \left\{ t >0 : \abs{R_t^2 - r^2} > \lambda \right\}$, we have:
\begin{align*}
\BROB{r}{ \abs{R_s^2 - r^2} > \lambda}
& \leq \BROB{r}{\sup_{0 \leq t \leq s} \abs{R_t^2 - r^2} > \lambda}
= \BROB{r}{T_\lambda \leq s}
= \BROB{r}{T_\lambda \leq s, \abs{R_{T_\lambda}^2 - r^2} \geq \lambda}.
\end{align*}
And recalling that (see \cite{LawlerBessel})
\[
d(R_t^2) = 2 R_t dW_t
\]
where $(W_t)_{t \geq 0}$ is a standard one-dimensional Brownian motion, we see that $(R_t^2)_{t \geq 0}$ is a local martingale whose quadratic variation is given by
\[
\forall T \geq 0, ~\scalar{R^2}_T = 4 \int_0^T R_t^2 dt.
\]
In particular, $\scalar{R^2}_{T_\lambda} \leq 4 (r^2 + \lambda) T_\lambda$ a.s. Also, because $(R_{t \wedge T_\lambda}^2 - r^2, t \geq 0)$ is bounded, for all $u>0$,
\[
\left( e^{u(R_{t \wedge T_\lambda}^2 - r^2) - u^2 \scalar{R^2}_{t \wedge T_\lambda} /2}, t \geq 0 \right)
\]
is a martingale uniformly integrable.
We thus have by Markov's inequality: for all $u >0$,
\begin{align*}
\BROB{r}{T_\lambda \leq s, R_{T_\lambda}^2 - r^2 \geq \lambda}
& \leq \BROB{r}{T_\lambda \leq s, e^{u(R_{T_\lambda}^2 - r^2) - u^2 \scalar{R^2}_{T_\lambda} /2} \geq e^{u \lambda - u^2 \scalar{R^2}_{T_\lambda} /2}} \\
& \leq \BROB{r}{e^{u(R_{T_\lambda}^2 - r^2) - u^2 \scalar{R^2}_{T_\lambda} /2} \geq e^{u \lambda - 2u^2 (r^2 + \lambda) s }} \\
& \leq e^{-u \lambda + 2 u^2 (r^2 + \lambda)s} = \exp \left( - \frac{\lambda^2}{8(r^2 + \lambda)s} \right)
\end{align*}
with the choice of $u = \lambda / (4(r^2 + \lambda)s)$. The same reasoning works for $\BROB{r}{T_\lambda \leq s, R_{T_\lambda}^2 - r^2 \leq -\lambda}$ and we have found
\[
\forall \lambda >0, ~\BROB{r}{ \abs{R_s^2 - r^2} > \lambda \sqrt{s}} \leq 2 \exp \left( - \frac{\lambda^2}{8(r^2 + \lambda \sqrt{s})} \right).
\]
It then implies that
\begin{align*}
\EXPECTB{r}{ \abs{R_s^2 - r^2}^p }
& = s^{p/2} \int_0^\infty \BROB{r}{\abs{R_s^2 - r^2} > \lambda^{1/p} \sqrt{s}} d \lambda \\
& \leq 2 s^{p/2} \left( \int_0^1 \exp \left( - \frac{\lambda^{2/p}}{8(r^2 + \sqrt{s})} \right) d \lambda + \int_1^\infty \exp \left( - \frac{\lambda^{1/p}}{8(r^2 + \sqrt{s})} \right) d \lambda \right) \\
& \leq C s^{p/2} \left( (r^2 + \sqrt{s})^{p/2} + (r^2 + \sqrt{s})^p \right)
\end{align*}
which yields \eqref{eq:lem Bessel continuity} recalling that $s \leq 1$.
\end{proof}

We are now ready to prove Lemma \ref{lem:continuity local times}.

\begin{proof}[Proof of Lemma \ref{lem:continuity local times}]
The proof will be decomposed in two steps. The first one will bound from above the left hand side term of \eqref{eq:prop continuity} when $x=y$ whereas the second one will treat the case $\delta = \eps$. In the first part, we will be able to transfer all the computations from the local times to the zero-dimensional Bessel process. For the second part, we will use the result of the first step to compare the local time $L_{x,\eps}(\tau)$ with the occupation measure of a narrow annulus around the circle $\partial D(x,\eps)$. Then an elementary argument of monotonicity will allow us to conclude.

In the entire proof, we will consider $p \geq 1$, $\eta > \eta' >0$, $x,y \in D$, $0 < \eps, \delta < \eta'$ such that $D(x,\eta') \cup D(y, \eta') \subset D$. Without loss of generality, we will assume that $\eps \geq \delta$. All the constants appearing in the proof may depend on $p$, $\eta$ and $\eta'$. Before starting off, let us notice that if we fix $K>0$, the result \eqref{eq:prop continuity} is clear if $\abs{x-y} \vee \abs{\eps-\delta} \geq \eps^{3/2}/K$. Indeed, in that case we have:
\begin{align*}
\EXPECT{x_0}{ \abs{ L_{x,\eps}(\tau) - L_{y,\delta}(\tau)}^p}
& \leq 2^{p-1} \EXPECT{x_0}{ L_{x,\eps}(\tau)^p + L_{y,\delta}(\tau)^p}
\leq C \eps^p \abs{\log \eps}^p \\
& \leq C K^{p/3} \eps^{p/2} \abs{\log \eps}^p \left( \abs{x-y} \vee \abs{\eps-\delta} \right)^{p/3} \\
& \leq C' \norme{ (x,\eps) - (y,\delta) }^{p/3}.
\end{align*}
In the rest of the proof, we will thus assume that $\abs{x-y} \vee \abs{\eps-\delta} \leq \eps^{3/2}/K$. It will be convenient for us in particular because it forces $\eps - \abs{x-y}^{2/3} - \abs{x-y}$ to be positive (if $K$ is larger than $2^{3/2}$ say).

\emph{Step 1.} In this step, we assume that $x=y$. To use the links between the local times and the zero-dimensional Bessel process, we consider the different excursions from $\partial D(x,\eps)$ to $\partial D(x,\eta)$: we define $\sigma_0^{(2)}:=0$ and for all $i \geq 1$,
\[
\sigma_i^{(1)} := \inf \left\{ t > \sigma_{i-1}^{(2)}, B_t \in \partial D(x,\eps) \right\}
\mathrm{~and~}
\sigma_i^{(2)} := \inf \left\{ t > \sigma_{i}^{(1)}, B_t \in \partial D(x,\eta) \right\}.
\]
We also denote $N := \max \left\{ i \geq 0: \sigma_i^{(2)} < \tau \right\}$ the number of excursions before exiting the domain $D$ and for all $i \geq 1$, we denote $L_{x,\eps}^i$ and $L_{x,\delta}^i$ the local times of $\partial D(x,\eps)$ and $\partial D(x,\delta)$ accumulated during the $i$-th excursion. To avoid to condition on $N$, we do the following rough bound which follows from Jensen's inequality: for $N_0 \geq 1$, $\EXPECT{x_0}{ \abs{L_{x,\eps}(\tau) - L_{x,\delta}(\tau)}^p }$ is equal to
\begin{align}
& \EXPECT{x_0}{ \abs{ \sum_{i=1}^N L_{x,\eps}^i - L_{x,\delta}^i }^p \indic{N \leq N_0} } + \EXPECT{x_0}{ \abs{L_{x,\eps}(\tau) - L_{x,\delta}(\tau)}^p \indic{N > N_0}} \nonumber \\
& \leq (N_0)^{p-1} \sum_{i=1}^{N_0} \EXPECT{x_0}{ \abs{ L_{x,\eps}^i - L_{x,\delta}^i }^p } + \EXPECT{x_0}{ \abs{L_{x,\eps}(\tau) - L_{x,\delta}(\tau)}^{2p} }^{1/2} \PROB{x_0}{N>N_0}^{1/2} \nonumber \\
& \leq (N_0)^{p} \max_{x_0' \in \partial D(x,\eps)} \EXPECT{x_0'}{ \abs{L_{x,\eps}(\tau_{\partial D(x,\eta)}) - L_{x,\delta}(\tau_{\partial D(x,\eta)})}^p } + C \eps^p \abs{\log \eps}^p \left( \frac{C'}{\abs{\log \eps}} \right)^{N_0/2}. \label{eq:proof continuity a}
\end{align}
If we choose $N_0$ to be the first integer larger than
\[
2p \left. \log \left( \frac{\eps \abs{\log \eps}}{\abs{\eps-\delta}^{1/2}} \right) \right/ \log \left( \frac{\abs{\log \eps}}{C'} \right),
\]
the second term of \eqref{eq:proof continuity a} is at most $C\abs{\eps-\delta}^{p/2}$. Thanks to \eqref{eq:prop local times and Bessel process} and Lemma \ref{lem:Bessel continuity}, the first term of \eqref{eq:proof continuity a} can be easily controlled: denoting $s = \log (\eps / \delta)$ and $R_0 = \sqrt{L_{x,\eps}(\tau_{\partial D(x,\eta)})/\eps}$, for any $x_0' \in \partial D(x,\eps)$, $\EXPECT{x_0'}{ \abs{L_{x,\eps}(\tau_{\partial D(x,\eta)}) - L_{x,\delta}(\tau_{\partial D(x,\eta)})}^p }$ is at most
\begin{align*}
& 2^{p-1} \EXPECT{x_0'}{ \abs{L_{x,\eps}(\tau_{\partial D(x,\eta)}) - \frac{\eps}{\delta} L_{x,\delta}(\tau_{\partial D(x,\eta)})}^p } + 2^{p-1} \abs{\eps - \delta}^p \EXPECT{x_0'}{ \left( \frac{1}{\delta} L_{x,\delta}(\tau_{\partial D(x,\eta)}) \right)^p} \\
& \leq 2^{p-1} \eps^p \EXPECT{x_0'}{ \EXPECTB{R_0}{ \abs{R_s^2 - R_0^2}^p }} + C \abs{\eps-\delta}^p \abs{ \log \delta}^p \\
& \leq C \eps^p (\log (\eps/\delta))^{p/2} \EXPECT{x_0'}{ \max \left( 1, \left( \frac{1}{\eps} L_{x,\eps}(\tau_{\partial D(x,\eta)}) \right)^p \right)} + C \abs{\eps-\delta}^p \abs{ \log \delta}^p \\
& \leq C \eps^p (\log (\eps/\delta))^{p/2} \abs{ \log \eps}^p + C \abs{\eps-\delta}^p \abs{ \log \delta}^p.
\end{align*}
Recalling that $\abs{\eps - \delta} \leq \eps^{3/2}/K$, it leads to
\[
\EXPECT{x_0'}{ \abs{L_{x,\eps}(\tau_{\partial D(x,\eta)}) - L_{x,\delta}(\tau_{\partial D(x,\eta)})}^p }
\leq C \abs{\eps- \delta}^{p/2}.
\]
Coming back to \eqref{eq:proof continuity a}, we have just proved that
\begin{equation}
\label{eq:proof continuity b}
\EXPECT{x_0}{ \abs{L_{x,\eps}(\tau) - L_{x,\delta}(\tau)}^p }
\leq C \abs{ \log \abs{\eps-\delta} }^p \abs{\eps-\delta}^{p/2}.
\end{equation}

\emph{Step 2.} Thanks to the first step we can now assume that $\eps = \delta$. In this step, we will denote for $u \in \R$, $\{u\}_+^p := \max(u,0)^p$. It will be convenient because it is a non-decreasing and convex function. We will also denote $\alpha = \abs{x-y}^{2/3}$. By taking $K$ large enough and decreasing (resp. increasing) slightly the value of $\eta$ (resp. $\eta'$) if necessary, we will be able to use the results of the first part for the circles 
\[
\left\{ \partial D(x,r), \eps - \alpha - \abs{x-y} < r < \eps + \alpha + \abs{x-y} \right\}
\mathrm{~and~}
\left\{ \partial D(y,r), \eps - \alpha < r < \eps + \alpha \right\}.
\]
Recall that $\eps - \alpha - \abs{x-y} >0$ thanks to the assumption $\abs{x-y} \leq \eps^{3/2} / K$.
We notice that $\prob_{x_0}$-a.s.
\begin{equation}
\label{eq:proof continuity c}
I_x := \int_{\eps - \alpha - \abs{x-y}}^{\eps+\alpha+\abs{x-y}} L_{x,r}(\tau) dr
\mathrm{~and~}
I_y := \int_{\eps - \alpha}^{\eps+\alpha} L_{y,r}(\tau) dr
\end{equation}
are equal to the occupation measures up to time $\tau$ of the annuli
\[
D(x,\eps + \alpha + \abs{x-y}) \backslash D(x,\eps - \alpha - \abs{x-y})
\mathrm{~and~}
D(y,\eps + \alpha) \backslash D(y,\eps - \alpha)
\]
respectively. As the first annulus contains the second one, $I_x \geq I_y$ $\prob_{x_0}$-a.s. We have
\begin{align*}
\EXPECT{x_0}{ \left\{ L_{y,\eps}(\tau) - L_{x,\eps}(\tau) \right\}_+^p }
& \leq C \EXPECT{x_0}{ \left\{ \tfrac{1}{2\alpha} I_y - \tfrac{1}{2(\alpha + \abs{x-y})} I_x \right\}_+^p } \\
&  + C \EXPECT{x_0}{ \left\{ L_{y,\eps}(\tau) - \tfrac{1}{2\alpha} I_y \right\}_+^p }
+ C \EXPECT{x_0}{ \left\{ \tfrac{1}{2(\alpha + \abs{x-y})} I_x - L_{x,\eps}(\tau) \right\}_+^p }.
\end{align*}
By our previous observation, the first term on the right hand side is at most
\[
C \left( \frac{\abs{x-y}}{\alpha} \right)^p \EXPECT{x_0}{ \left\{ \frac{1}{2(\alpha + \abs{x-y})} I_x \right\}_+^p } \leq C \abs{x-y}^{p/3}
\]
thanks to our choice of $\alpha$. The two other terms can be controlled thanks to \eqref{eq:proof continuity b}: by Jensen's inequality
\begin{align*}
\EXPECT{x_0}{ \left\{ L_{y,\eps}(\tau) - \frac{1}{2\alpha} I_y \right\}_+^p }
& = \EXPECT{x_0}{ \left\{ \frac{1}{2 \alpha} \int_{\eps -\alpha}^{\eps +\alpha} \left( L_{y,\eps}(\tau) - L_{y,r}(\tau) \right) dr \right\}_+^p } \\
& \leq \frac{1}{2 \alpha} \int_{\eps-\alpha}^{\eps+\alpha} \EXPECT{x_0}{ \left\{ L_{y,\eps}(\tau) - L_{y,r}(\tau) \right\}_+^p } dr
\leq C \alpha^{p/2} \abs{ \log \alpha }^p
\end{align*}
and the third term satisfies a similar upper bound. We have thus obtained:
\[
\EXPECT{x_0}{ \left\{ L_{y,\eps}(\tau) - L_{x,\eps}(\tau) \right\}_+^p } \leq C \abs{x-y}^{p/3} \abs{ \log \abs{x-y}}^p.
\]
By symmetry, the same thing is true for $\EXPECT{x_0}{ \left\{ L_{x,\eps}(\tau) - L_{y,\eps}(\tau) \right\}_+^p }$ which concludes the proof.
\end{proof}

\paragraph*{Acknowledgement}

I am grateful to Nathanaël Berestycki for having supported me during the whole process of writing this paper and for numerous interesting discussions and to the referee for a careful reading of the manuscript and suggestions which helped improve the presentation. I am also grateful to Hugo Falconet and Guillaume Baverez for interesting remarks on a first version of the article.

\bibliographystyle{alpha}
\bibliography{bibliography}

\end{document}